\newtheorem{cor}{Corollary}
\newtheorem{thm}{Theorem}
\newtheorem{lem}{Lemma}
\newtheorem{prop}{Proposition}  
\newtheorem{defn}{Definition}
\theoremstyle{definition}
\newcommand{\R}{\mathbb R}
\newcommand{\Z}{\mathbb Z}
\newcommand{\N}{\mathbb N}
\renewcommand{\P}{\mathbb{P}^2}
\newcommand{\Pn}{\mathbb{P}^n}
\renewcommand{\a}{\alpha}
\renewcommand{\b}{\beta}
\renewcommand{\c}{\gamma}
\newcommand{\conic}[1]{\mathcal{C} (#1)}
\renewcommand{\l}{\lambda}
\title{\Large{\textbf{Nets of lines with the combinatorics of the square grid and with touching inscribed conics}}}
\author{Alexander I. Bobenko\thanks{Partially supported by the DFG Collaborative Research Center TRR 109 ``Discretization in Geometry and Dynamics"} \; and Alexander Y. Fairley\thanks{Supported by a Leverhulme Trust Study Abroad Studentship} \\[6pt] Institut f{\"u}r Mathematik, Technische Universit{\"a}t Berlin,\\ Str. des 17. Juni 136, 10623 Berlin, Germany \\[6pt] bobenko@math.tu-berlin.de \; fairley@math.tu-berlin.de }
\date{18 November 2019}
\begin{document}

\sloppy
\maketitle

\begin{abstract}

In the projective plane, we consider congruences of straight lines with the combinatorics of the square grid and with all elementary quadrilaterals possessing touching inscribed conics. The inscribed conics of two combinatorially neighbouring quadrilaterals have the same touching point on their common edge-line. We suggest that these nets are a natural projective generalisation of incircular nets. It is shown that these nets are planar Koenigs nets. Moreover, we show that general Koenigs nets are characterised by the existence of a 1-parameter family of touching inscribed conics. It is shown that the lines of any grid of quadrilaterals with touching inscribed conics are tangent to a common conic. These grids can be constructed via polygonal chains that are inscribed in conics. The special case of billiards in conics corresponds to  incircular nets.
\end{abstract}

\newpage

\section{Introduction}\label{section: introduction}

The geometry of incircular nets (IC-nets) has recently been discussed in great detail in \cite{IC}. IC-nets were introduced by B\"ohm \cite{Bohm} and they are defined as congruences of straight lines in the plane with the combinatorics of the square grid such that each elementary quadrilateral admits an inscribed circle. IC-nets have a wealth of geometric properties, including the distinctive feature that any IC-net comes with a conic to which the gridlines are tangent. IC-nets are closely related to Poncelet(-Darboux) grids, which were originally introduced by Darboux \cite{D8789} and further studied in \cite{LeviTabachnikov} and \cite{Schwartz}.

Checkerboard IC-nets constitute a natural generalisation of IC-nets. The gridlines of checkerboard IC-nets have the combinatorics of the square grid but it is only required that every second quadrilateral admits an inscribed circle, namely the ``black'' (or ``white'' if the colours are interchanged) quadrilaterals if the quadrilaterals of the net are combinatorially coloured like those of a checkerboard. Checkerboard IC-nets can be consistently oriented so that their lines and circles are in oriented contact. Thus, these nets are naturally treated in terms of Laguerre geometry. In \cite{BST} checkerboard IC-nets were explicitly integrated in terms of Jacobi elliptic functions similar to that of elliptic billiards \cite{DR}. Recently in \cite{BLPT} the corresponding definitions and results were extended to the cases of incircular nets in the 2-sphere and also in the hyperbolic plane by developing the corresponding Laguerre geometries.

In this paper we suggest a purely projective generalisation of IC-nets. Namely, we consider planar congruences of straight lines with the combinatorics of the square grid and with all elementary quadrilaterals possessing touching inscribed conics (see Figure~\ref{figure: maintheoremproof}). It is worth mentioning that the lines of the projective grids we introduce correspond not to the lines of IC-nets but to the lines passing through the centres of their circles. We describe their geometry in detail and show, in particular, in Section \ref{subsection: sixlinestangenttoaconic} that the lines of these grids touch a common conic. A further important property is that planar grids of quadrilaterals with touching inscribed conics are planar Koenigs nets. Koenigs nets are an important example of integrable discrete differential geometry \cite{DDG}. In Section \ref{subsection: koenigsnets}, we show that the property to possess touching inscribed conics is characteristic for general Koenigs net. This characterisation of Koenigs nets via inscribed conics (Theorem~\ref{thm: Koenigs}) was independently discovered by Christian M\"uller. 

Our geometric analysis is based essentially on Theorem~\ref{thm: mainthm}, which is an incidence theorem for five conics and six touching lines, see Figure~\ref{figure: maintheoremproof}. The corresponding implications for grids of quadrilaterals with touching inscribed conics are described in Section~\ref{section: grids}. In particular, it is shown that these grids can be constructed via polygonal chains that are inscribed in conics. In Section \ref{subsection: billiards} it is demonstrated how the special case of billiards in conics can be used to generate incircular nets.

\section{Preliminaries}\label{section: lemma}

In this section we present some known results about inscribed conics. Many theorems about quadrilaterals with inscribed conics can be found in, for instance, the chapters XII, XVI and XVIII of \cite{Chasles}. Many other theorems about conics can be found in \cite{Universe}.

We denote by $u \in \R^{n+1}$ the homogeneous coordinates of the corresponding points $\{[u]: u \in \R^{n+1}\}$ of the projective space $\Pn$. In the projective plane $\P$, any arrangement of lines is called \emph{generic} if and only if no three of the lines are concurrent. Let $\Box([u], [v], [w], [x])$ denote the quadrilateral with the vertices $[u], [v], [w], [x]$ and with the generic edge-lines $([u], [v]), ([v],[w]), ([w],[x]), ([x],[u])$. The lines $([u], [w])$ and $([v], [x])$ are the diagonals of the quadrilateral.

\begin{defn}\label{defn: nondeginconic}
For any quadrilateral $\Box([u], [v], [w], [x])$ in $\P$, a \emph{non-degenerate inscribed conic} is a non-degenerate conic $\conic{\phi} := \{[x] \in P : \phi(x,x)=0\}$ defined by a non-zero symmetric bilinear form $\phi$ such that
\begin{align*}
\phi(u, \a_1u +\b_1 v) & = 0 = \phi(\a_1u +\b_1 v, v)\\
\phi(v, \a_2 v +\b_2 w) & =  0 =  \phi(\a_2 v +\b_2 w, w)\\
\phi(w, \a_3 w +\b_3 x) & =  0 =   \phi(\a_3 w +\b_3 x, x)\\
\phi(x, \a_ 4 x+\b_4 u) & =  0 =  \phi(\a_ 4 x+\b_4 u, u)
\end{align*}
where $[\a_1u +\b_1 v]$, $[\a_2 v +\b_2 w]$, $[\a_3 w +\b_3 x]$, $[\a_ 4 x+\b_4 u]$ are points, which are contained in the edge-lines, that are distinct from the vertices of the quadrilateral.
\end{defn}

In other words, the four edge-lines of the quadrilateral are the polars of the four points  $[\a_1u +\b_1 v]$, $[\a_2 v +\b_2 w]$, $[\a_3 w +\b_3 x]$, $[\a_ 4 x+\b_4 u]$ which are contained in the conic $\conic{\phi}$.

Proposition~\ref{prop: opptangencypts} is a degenerate case of Brianchon's theorem \cite{BergerII, Universe}.

\begin{prop}\label{prop: opptangencypts}
Let $\mathcal{C}$ be a non-degenerate conic that is inscribed in a quadrilateral $\Box([u], [v], [w], [x])$ in $\P$ and let $[\a_1u +\b_1 v]$, $[\a_2 v +\b_2 w]$, $[\a_3 w +\b_3 x]$, $[\a_ 4 x+\b_4 u]$ be the four tangency points. Then, the lines  $([\a_1u +\b_1 v], [\a_3 w +\b_3 x])$ and $([\a_2 v +\b_2 w],[\a_ 4 x+\b_4 u])$ are concurrent with the two diagonals of the quadrilateral. (See Figure~\ref{figure: mainlemma}.)
\end{prop}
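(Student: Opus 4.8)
The plan is to exploit the polarity induced by the non-degenerate conic $\mathcal{C}$, i.e. the correlation sending a point $[p]$ to its polar line $\{[y] : \phi(p,y)=0\}$, together with the classical fact that the diagonal triangle of a complete quadrilateral circumscribed about a conic is self-polar. Throughout write $P_1=[\a_1u+\b_1v]$, $P_2=[\a_2v+\b_2w]$, $P_3=[\a_3w+\b_3x]$, $P_4=[\a_4x+\b_4u]$ for the four tangency points, so that each edge-line is the tangent to $\mathcal{C}$ at the corresponding $P_i$, equivalently the polar of $P_i$.

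First I would record how the polarity acts on the data. Each vertex is the intersection of two consecutive edge-lines, hence of the polars of two tangency points; by the reciprocity of pole and polar (La Hire's theorem) the polar of a vertex is the line joining the two corresponding tangency points, so for instance the polar of $[v]$ is $(P_1,P_2)$ and the polar of $[u]$ is $(P_4,P_1)$. Dually, the pole of a chord is the intersection of the tangents at its endpoints, so the pole of $(P_1,P_3)$ is $E:=([u],[v])\cap([w],[x])$ and the pole of $(P_2,P_4)$ is $F:=([v],[w])\cap([x],[u])$.

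The key step is then to locate $Q:=(P_1,P_3)\cap(P_2,P_4)$. Since $(P_1,P_3)$ and $(P_2,P_4)$ are the polars of $E$ and $F$, their common point $Q$ is the pole of the line $(E,F)$. Now $E$ and $F$ are a pair of opposite vertices of the complete quadrilateral formed by the four edge-lines, so $(E,F)$ is one of its three diagonals, and it is the side opposite the vertex $G:=([u],[w])\cap([v],[x])$ of the diagonal triangle. Because this diagonal triangle is self-polar with respect to $\mathcal{C}$, the polar of $G$ is exactly $(E,F)$, i.e. the pole of $(E,F)$ is $G$. Comparing the two descriptions gives $Q=G$, which is precisely the statement that $(P_1,P_3)$, $(P_2,P_4)$ and the two diagonals $([u],[w])$, $([v],[x])$ all pass through the single point $G$.

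The main obstacle is justifying the self-polar property of the diagonal triangle; I would either cite it from the projective-geometry literature (e.g. \cite{BergerII, Universe}) or prove it directly after normalising the conic. Applying a projective transformation I may assume $\mathcal{C}=\{x_1x_3=x_2^2\}$ and parametrise the tangency points as $P_i=[t_i^2:t_i:1]$; the tangent at $t$ is $y_1-2t\,y_2+t^2y_3=0$, and the tangents at parameters $s,t$ meet in $[\,st:\tfrac{1}{2}(s+t):1\,]$. A short determinant computation then shows that both $(P_1,P_3)\cap(P_2,P_4)$ and $([u],[w])\cap([v],[x])$ equal the point whose coordinates are symmetric in the pairs $\{t_1,t_3\}$ and $\{t_2,t_4\}$, which simultaneously establishes the self-polar fact and the whole proposition with no appeal to an external result. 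This calculation is routine, but it is the one place where care is needed, so I would keep it as the rigorous backbone and use the polarity argument above for transparency.
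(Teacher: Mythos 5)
Your proposal is correct, but it takes a genuinely different route from the paper, which disposes of Proposition~\ref{prop: opptangencypts} in a single line as a degenerate case of Brianchon's theorem \cite{BergerII, Universe}: collapsing two pairs of adjacent sides of a circumscribed hexagon (sides $a,a,b,c,c,d$) turns the doubled vertices into tangency points, and Brianchon applied to this degenerate hexagon shows that the chord of contact $(P_1,P_3)$ passes through the intersection of the two diagonals; the degeneration with $b,d$ doubled does the same for $(P_2,P_4)$, giving the full concurrency. You instead use the polarity induced by $\mathcal{C}$, and your argument is sound: La Hire correctly identifies the polars of the vertices and the poles of the chords, so $Q=(P_1,P_3)\cap(P_2,P_4)$ is the pole of the third diagonal $(E,F)$ of the complete quadrilateral, and the self-polarity of the diagonal triangle forces $Q=G=([u],[w])\cap([v],[x])$ by the injectivity of the pole--polar correspondence. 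Your coordinate backbone also checks out: with $\mathcal{C}=\{x_1x_3=x_2^2\}$ the chord through parameters $a,b$ is $y_1-(a+b)y_2+ab\,y_3=0$, and both $(P_1,P_3)\cap(P_2,P_4)$ and $([u],[w])\cap([v],[x])$ come out as $[\,t_1t_3(t_2+t_4)-t_2t_4(t_1+t_3):t_1t_3-t_2t_4:t_1+t_3-t_2-t_4\,]$, symmetric in the pairs $\{t_1,t_3\}$ and $\{t_2,t_4\}$ exactly as you predicted. Two small points deserve a word in a final write-up: genericity of the edge-lines guarantees that the four tangency points are distinct and that $E\neq F$, so $(P_1,P_3)\neq(P_2,P_4)$ (four distinct points of a non-degenerate conic are never collinear) and the line $(E,F)$ is well defined; and the real normal form $x_1x_3=x_2^2$ is available because $\mathcal{C}$ contains the real tangency points. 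As for what each approach buys: the paper's citation is the shortest path and stays entirely within incidence geometry, while your polarity argument is self-contained, identifies the concurrency point conceptually as the pole of the third diagonal, and meshes naturally with the harmonic-ratio bookkeeping (${\rm{cr}}=-1$ via the complete quadrilateral theorem) that the paper itself relies on in the proof of Theorem~\ref{thm: inquad}.
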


Consider a triangle $\triangle([u], [v], [w])$. Let $[\a_1u+\b_1v]$, $[\a_2v+\b_2w]$, $[\a_3w + \b_3u]$ be distinct from the vertices. The points form a \emph{Ceva configuration} if and only if the three lines $([u],[\a_2v+\b_2w])$, $([v],[\a_3w+\b_3u])$ and $([w],[\a_1u+\b_1v])$ are concurrent. The points form a \emph{Menelaus configuration} if and only if the three points $[\a_1u+\b_1v]$, $[\a_2v+\b_2w]$, $[\a_3w + \b_3u]$ are collinear.

\begin{figure}[htbp]
\begin{center}
  \begin{subfigure}[t]{0.4\textwidth}
    \includegraphics[width=\textwidth]{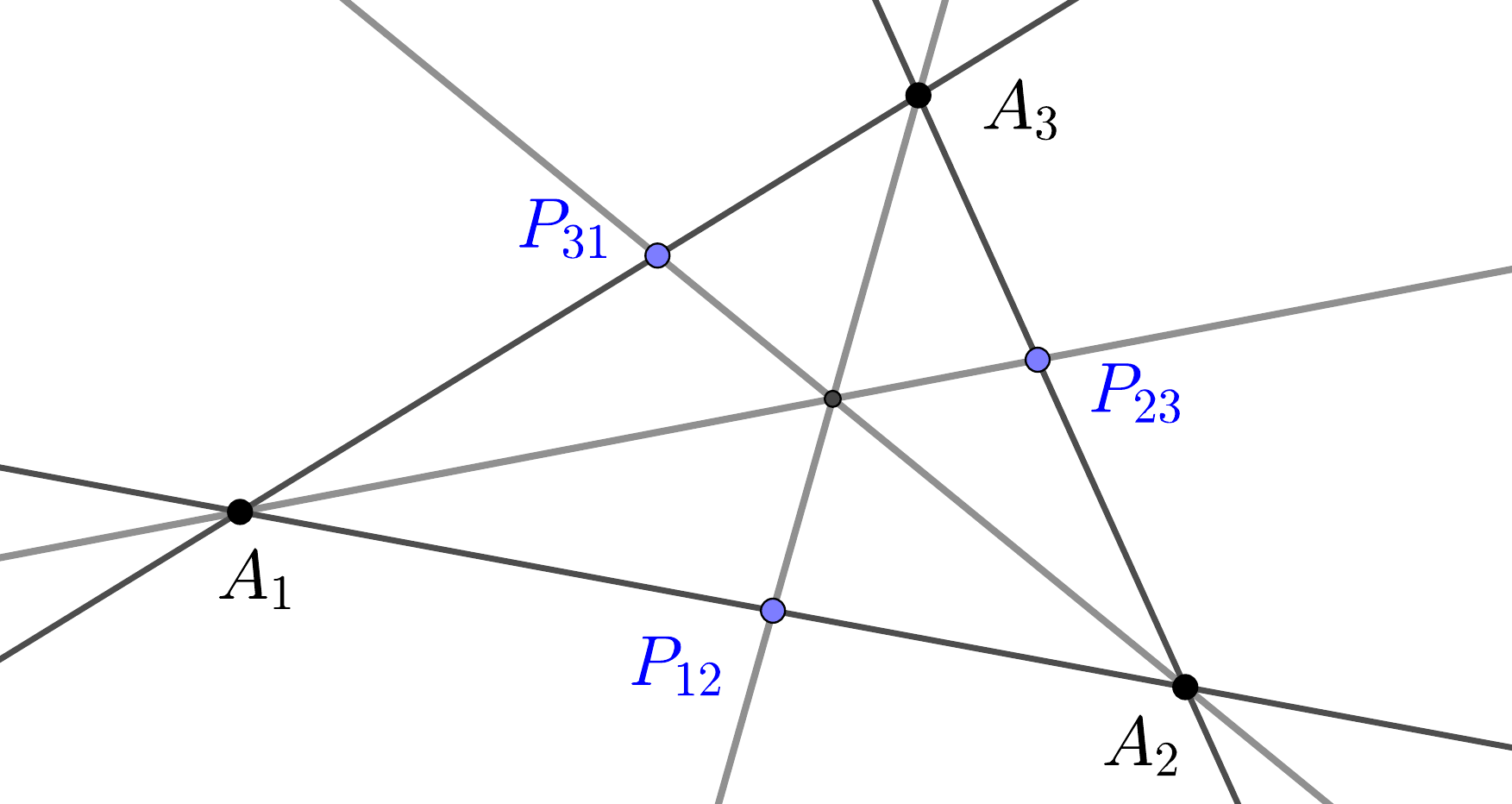}
 \end{subfigure}
  \hspace{0.5cm}%
  \begin{subfigure}[t]{0.4\textwidth}
    \includegraphics[width=\textwidth]{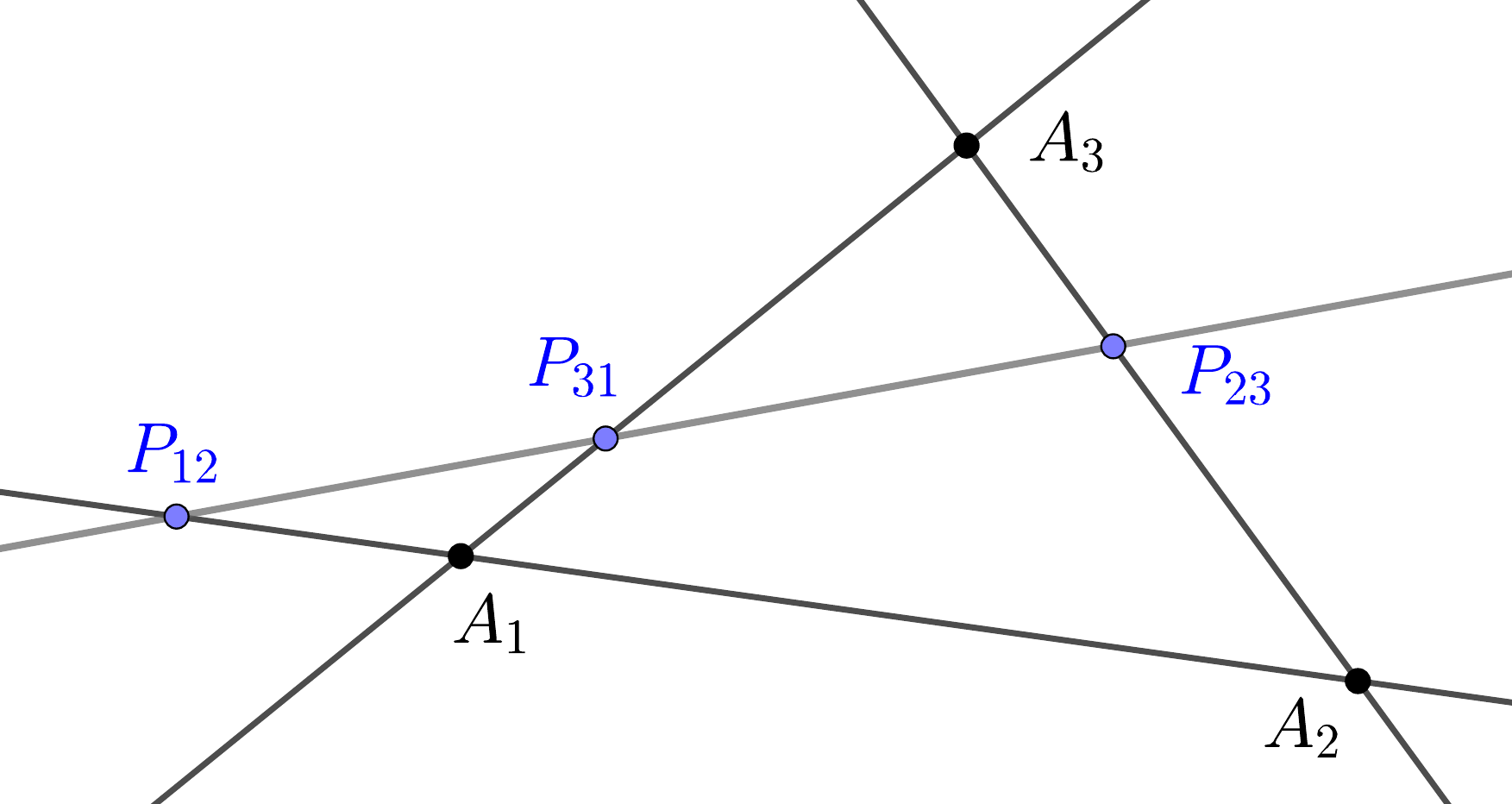}
  \end{subfigure}
  \end{center}
\caption{A Ceva configuration and a Menelaus configuration.}
\end{figure}
 
\begin{thm}[Ceva's theorem and Menelaus' theorem]\label{thm: cevamenelaus}
Consider a triangle $\triangle(A_1, A_2, A_3)$ in the affine plane. Let $P_{12}$, $P_{23}$ and $P_{31}$ be points on the respective edge-lines $(A_1,A_2)$, $(A_2,A_3)$ and $(A_3,A_1)$ distinct from the vertices of the triangle. Then,
\begin{enumerate}[label=(\roman*)]
\item $\frac{l(A_1,P_{12})}{l(P_{12},A_2)} \cdot  \frac{l(A_2,P_{23})}{l(P_{23},A_3)} \cdot \frac{l(A_3,P_{31})}{l(P_{31},A_1)} = 1$ if and only if the points form a Ceva configuration.
\item $\frac{l(A_1,P_{12})}{l(P_{12},A_2)} \cdot  \frac{l(A_2,P_{23})}{l(P_{23},A_3)} \cdot \frac{l(A_3,P_{31})}{l(P_{31},A_1)} = -1$ if and only if the points form a Menelaus configuration.
\end{enumerate}
Here, $l(A,P)$ denotes an oriented length.
\end{thm}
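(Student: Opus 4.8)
The plan is to prove both theorems simultaneously by placing the configuration in homogeneous coordinates adapted to the triangle and reducing each assertion to the vanishing of a single $3 \times 3$ determinant. First I would choose a projective frame in which the vertices are the standard basis vectors, $A_1 = [1,0,0]$, $A_2 = [0,1,0]$, $A_3 = [0,0,1]$; this is legitimate because the three vertices are in general position. Since each $P_{ij}$ lies on an edge-line, it has the form $P_{12} = [\a_1 A_1 + \b_1 A_2]$, $P_{23} = [\a_2 A_2 + \b_2 A_3]$, $P_{31} = [\a_3 A_3 + \b_3 A_1]$, matching the parametrisation used in the definitions of the Ceva and Menelaus configurations. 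Passing to the affine chart that places $A_1, A_2, A_3$ at genuine affine points, the affine position of $P_{12}$ is $(\a_1 A_1 + \b_1 A_2)/(\a_1 + \b_1)$, and a direct subtraction gives $P_{12} - A_1 = \tfrac{\b_1}{\a_1+\b_1}(A_2 - A_1)$ together with $A_2 - P_{12} = \tfrac{\a_1}{\a_1 + \b_1}(A_2 - A_1)$. Hence the oriented ratio is $l(A_1, P_{12})/l(P_{12}, A_2) = \b_1/\a_1$, and similarly for the other two edges.

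With this identification both claims become purely algebraic. The product of the three oriented ratios equals $(\b_1 \b_2 \b_3)/(\a_1 \a_2 \a_3)$ in each case, so part (i) asks whether this equals $1$ and part (ii) whether it equals $-1$. On the geometric side, I would compute the three cevian-lines $(A_1, P_{23})$, $(A_2, P_{31})$, $(A_3, P_{12})$ as cross products of the relevant coordinate vectors; their concurrency is the vanishing of the determinant whose rows are these three line-vectors, which expands to $\a_1 \a_2 \a_3 - \b_1 \b_2 \b_3 = 0$. For Menelaus, collinearity of $P_{12}, P_{23}, P_{31}$ is the vanishing of the determinant with rows $(\a_1, \b_1, 0)$, $(0, \a_2, \b_2)$, $(\b_3, 0, \a_3)$, which expands to $\a_1 \a_2 \a_3 + \b_1 \b_2 \b_3 = 0$. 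Comparing these two expansions with the ratio product proves both equivalences, and both directions of each ``if and only if'' are obtained at once, since the determinant condition is itself an equivalence rather than a one-way implication.

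The step I expect to require the most care is the sign bookkeeping in the oriented length. One must fix the convention so that $l(A_1, P_{12})/l(P_{12}, A_2) = \b_1/\a_1$ holds with the correct sign even when $P_{12}$ falls outside the segment $A_1 A_2$, which is precisely the situation forced in the Menelaus case, where an odd number of the three points must lie externally. The subtraction computation already encodes this, a negative value of $\b_1/\a_1$ signalling an external point, but it should also be checked that the normalisation $\a_i + \b_i \ne 0$ is harmless: the excluded case $\a_i + \b_i = 0$ corresponds exactly to $P_{ij}$ being the point at infinity of its edge-line and does not arise when the triangle and the $P_{ij}$ all lie at finite points. Once the sign convention is pinned down, the remaining determinant expansions are routine.
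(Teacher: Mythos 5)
Your proposal is correct, and there is nothing in the paper to compare it against line by line: the paper does not prove Theorem~\ref{thm: cevamenelaus} at all, stating it as classical and citing \cite{DDG, Perspectives}. Your argument is in fact essentially the standard homogeneous-coordinates proof found in the second of those references. The computations check out: with your frame, the cevians $(A_1,P_{23})$, $(A_2,P_{31})$, $(A_3,P_{12})$ have line coordinates $(0,-\beta_2,\alpha_2)$, $(\alpha_3,0,-\beta_3)$, $(-\beta_1,\alpha_1,0)$, whose determinant is $\alpha_1\alpha_2\alpha_3-\beta_1\beta_2\beta_3$, while the point-collinearity determinant is $\alpha_1\alpha_2\alpha_3+\beta_1\beta_2\beta_3$; since the hypothesis that each $P_{ij}$ is distinct from the vertices forces all $\alpha_i,\beta_i\neq 0$, the ratio product $\beta_1\beta_2\beta_3/\alpha_1\alpha_2\alpha_3$ is well defined and nonzero, and both equivalences follow in both directions at once, exactly as you say. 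Note also that the three cevians are automatically pairwise distinct (a coincidence would force some $P_{ij}$ to be a vertex), so vanishing of the determinant is genuinely equivalent to concurrency.

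Two refinements are worth making explicit. First, the individual identity $l(A_1,P_{12})/l(P_{12},A_2)=\beta_1/\alpha_1$ holds only for the specific affine lifts $(A_i,1)$: rescaling the representative of a single vertex by $c$ multiplies such a ratio by $c^{\pm 1}$. This is harmless here because the cyclic product is invariant (the factors $c_1/c_2$, $c_2/c_3$, $c_3/c_1$ cancel) and the two determinant conditions each scale by $c_1c_2c_3$, so your normalisation through the affine chart resolves the ambiguity, but a referee would want this said. Second, your route buys something a purely affine (signed-area) proof would not: the paper defines the Ceva configuration projectively, so concurrency at a point at infinity, i.e.\ three parallel cevians, must count as a Ceva configuration, and the determinant criterion covers that degenerate affine case uniformly with no extra argument; your caveat about $\alpha_i+\beta_i\neq 0$ correctly concerns only the $P_{ij}$ themselves, which are finite by hypothesis.
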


Note that the quotient of the oriented lengths is invariant with respect to the line orientation. Theorem~\ref{thm: cevamenelaus} can be found, for example, in \cite{DDG, Perspectives}.

\begin{defn}
Let $A_1, P_{12}, A_2, P_{21}$ be collinear points in affine space $\mathbb{A}^n$, $n\geq 1$. The \emph{cross ratio} is defined by $${\rm{cr}}(A_1, P_{12}, A_2, P_{21}):=\frac{l(A_1,P_{12})}{l(P_{12},A_2)}\frac{l(A_2,P_{21})}{l(P_{21},A_1)}.$$
\end{defn}

If ${\rm{cr}}(A_1, P_{12}, A_2, P_{21})=-1$, then the cross ratio is called \emph{harmonic} and the point $P_{21}$ is called the \emph{harmonic conjugate} of $P_{12}$ with respect to $A_1$ and $A_2$. Proposition~\ref{prop: cevamenelausharmonic} provides a well known method to construct the harmonic conjugate of $P_{12}$ with respect to $A_1$ and $A_2$.

\begin{prop}\label{prop: cevamenelausharmonic}
Let $[\a_1 u + \b_1 v]$, $[\a_2 v + \b_2 w]$, $[\a_3w + \b_3 u]$ be points that form a Ceva configuration on the triangle $\triangle([u],[v],[w])$. Then, the line $([\a_2 v + \b_2 w], [\a_3w + \b_3 u])$ intersects the line $([u],[v])$ at the harmonic conjugate of $[\a_1 u + \b_1 v]$ with respect to $[u]$ and $[v]$.
\end{prop}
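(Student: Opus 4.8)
The plan is to derive the harmonic cross ratio by combining Ceva's theorem and Menelaus' theorem, both recorded in Theorem~\ref{thm: cevamenelaus}. Write $A_1 = [u]$, $A_2 = [v]$, $A_3 = [w]$ and set $P_{12} = [\a_1 u + \b_1 v]$, $P_{23} = [\a_2 v + \b_2 w]$, $P_{31} = [\a_3 w + \b_3 u]$. Since harmonicity of a cross ratio is a projective invariant and the properties of being a Ceva or a Menelaus configuration are projectively well defined, I may pass to an affine chart in which all the relevant points and their intersections are finite. Let $Q$ denote the point where the line $(P_{23}, P_{31})$ meets the edge-line $(A_1, A_2)$; this is the point whose harmonicity we must establish.

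First I would invoke part (i) of Theorem~\ref{thm: cevamenelaus}: because $P_{12}, P_{23}, P_{31}$ form a Ceva configuration, the product of oriented ratios equals $1$,
\[
\frac{l(A_1,P_{12})}{l(P_{12},A_2)} \cdot \frac{l(A_2,P_{23})}{l(P_{23},A_3)} \cdot \frac{l(A_3,P_{31})}{l(P_{31},A_1)} = 1.
\]
Next I would apply part (ii) to the three collinear points $Q, P_{23}, P_{31}$, which lie on the edge-lines $(A_1,A_2)$, $(A_2,A_3)$, $(A_3,A_1)$ respectively, to get
\[
\frac{l(A_1,Q)}{l(Q,A_2)} \cdot \frac{l(A_2,P_{23})}{l(P_{23},A_3)} \cdot \frac{l(A_3,P_{31})}{l(P_{31},A_1)} = -1.
\]
Dividing the second identity by the first cancels the two factors involving $P_{23}$ and $P_{31}$ and yields $l(A_1,Q)/l(Q,A_2) = -\,l(A_1,P_{12})/l(P_{12},A_2)$.

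Finally I would substitute this relation into the definition of the cross ratio. Writing $r = l(A_1,P_{12})/l(P_{12},A_2)$, the displayed relation gives $l(A_1,Q)/l(Q,A_2) = -r$, so after reversing orientations in both numerator and denominator one finds $l(A_2,Q)/l(Q,A_1) = -1/r$. Hence
\[
{\rm cr}(A_1, P_{12}, A_2, Q) = \frac{l(A_1,P_{12})}{l(P_{12},A_2)}\cdot\frac{l(A_2,Q)}{l(Q,A_1)} = r \cdot \Bigl(-\tfrac{1}{r}\Bigr) = -1,
\]
which is exactly the statement that $Q$ is the harmonic conjugate of $P_{12}$ with respect to $[u]$ and $[v]$.

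I expect the main obstacle to be purely a matter of bookkeeping: keeping the orientation conventions for the oriented lengths consistent across the Ceva identity, the Menelaus identity, and the cross-ratio formula, and checking that the two cancelled factors are literally identical (not merely reciprocal) so that the division is valid. A cleaner but more computational alternative would work directly in homogeneous coordinates: one verifies that the Ceva condition is equivalent to $\a_1\a_2\a_3 = \b_1\b_2\b_3$, computes the intersection $Q = [\b_2\b_3\, u - \a_2\a_3\, v]$ of $(P_{23},P_{31})$ with $([u],[v])$, and then checks that the projective parameters of $P_{12}$ and $Q$ along this line are negatives of one another, which is again the harmonic condition.
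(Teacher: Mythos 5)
Your argument is correct, and there is in fact no in-paper proof to compare it against: the paper states Proposition~\ref{prop: cevamenelausharmonic} as a ``well known'' construction of the harmonic conjugate and omits the proof entirely. Your route --- applying Theorem~\ref{thm: cevamenelaus}(i) to the Ceva triple $P_{12}, P_{23}, P_{31}$, Theorem~\ref{thm: cevamenelaus}(ii) to the collinear triple $Q, P_{23}, P_{31}$, dividing so that the two literally identical factors cancel, and reading off ${\rm{cr}}(A_1, P_{12}, A_2, Q) = -1$ --- is the standard argument, and it is precisely the mechanism the paper itself exercises (in the reverse direction) inside the proof of Theorem~\ref{thm: inquad}, where the Menelaus point $[t] = [-\beta_1\beta_2 w + \alpha_1\alpha_2 u]$ and the harmonic relation ${\rm{cr}}([u],[r],[w],[t]) = -1$ produce the Ceva point $[r] = [\beta_1\beta_2 w + \alpha_1\alpha_2 u]$; the same sign flip appears in your coordinate check, where the Ceva condition $\alpha_1\alpha_2\alpha_3 = \beta_1\beta_2\beta_3$ lets you rewrite $P_{12} = [\beta_2\beta_3 u + \alpha_2\alpha_3 v]$ against $Q = [\beta_2\beta_3 u - \alpha_2\alpha_3 v]$, both of which I have verified. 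Two bookkeeping points you already half-address deserve one line each: Menelaus requires $Q$ to be distinct from the vertices, which does follow from the hypotheses (if $Q = A_1$ then the line $(Q, P_{31})$ is $(A_3, A_1)$, forcing $P_{23} = A_3$; if $Q = A_2$ then similarly $P_{31} = A_3$, both excluded), and the affine chart must be chosen so that $Q$ and all four points of the cross ratio are finite, which the projective invariance of the cross ratio permits since $Q$ always exists in $\P$ (the line $(P_{23}, P_{31})$ cannot coincide with $([u],[v])$). With those remarks your proof is complete and self-contained, which is arguably a useful supplement given that the paper relies on this proposition in the proof of Theorem~\ref{thm: porismloop}.
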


\begin{figure}[htbp]
\[\includegraphics[width=0.8\textwidth]{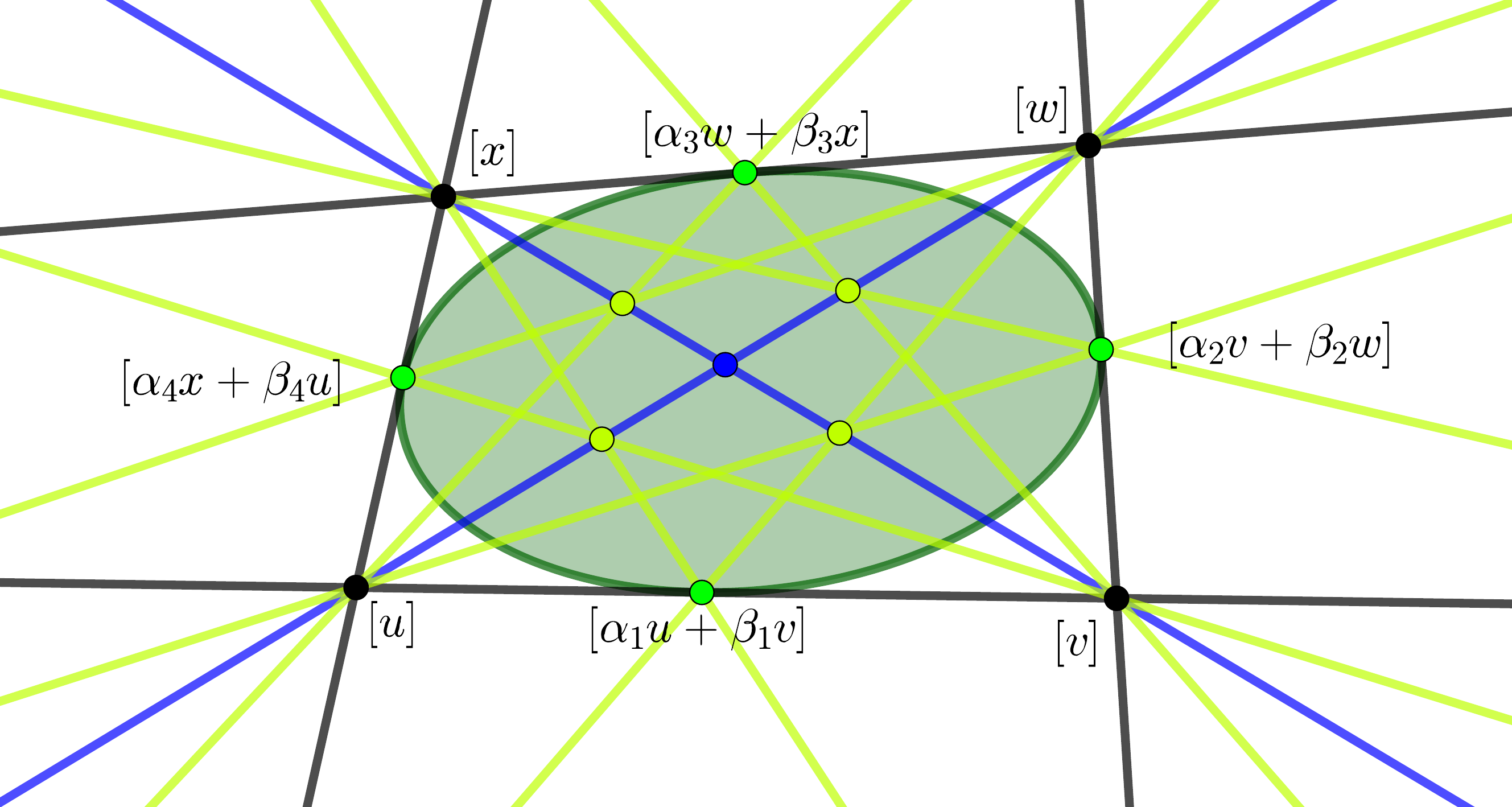}\]
\caption{For any non-degenerate conic that is inscribed in a quadrilateral $\Box([u], [v], [w], [x])$, the two lines connecting the opposite tangency points are concurrent with the two diagonals. The tangency points determine a Ceva configuration on each of the triangles $\triangle([u], [v], [w])$, $\triangle ([v], [w], [x])$, $\triangle ([w], [x], [u])$, $\triangle ([x], [u], [v])$. The intersection point of the diagonals of $\Box([u], [v], [w], [x])$ is a point of the four Ceva configurations.}
\label{figure: mainlemma}
\end{figure}

\begin{figure}[htbp] 
\[\includegraphics[width=0.8\textwidth]{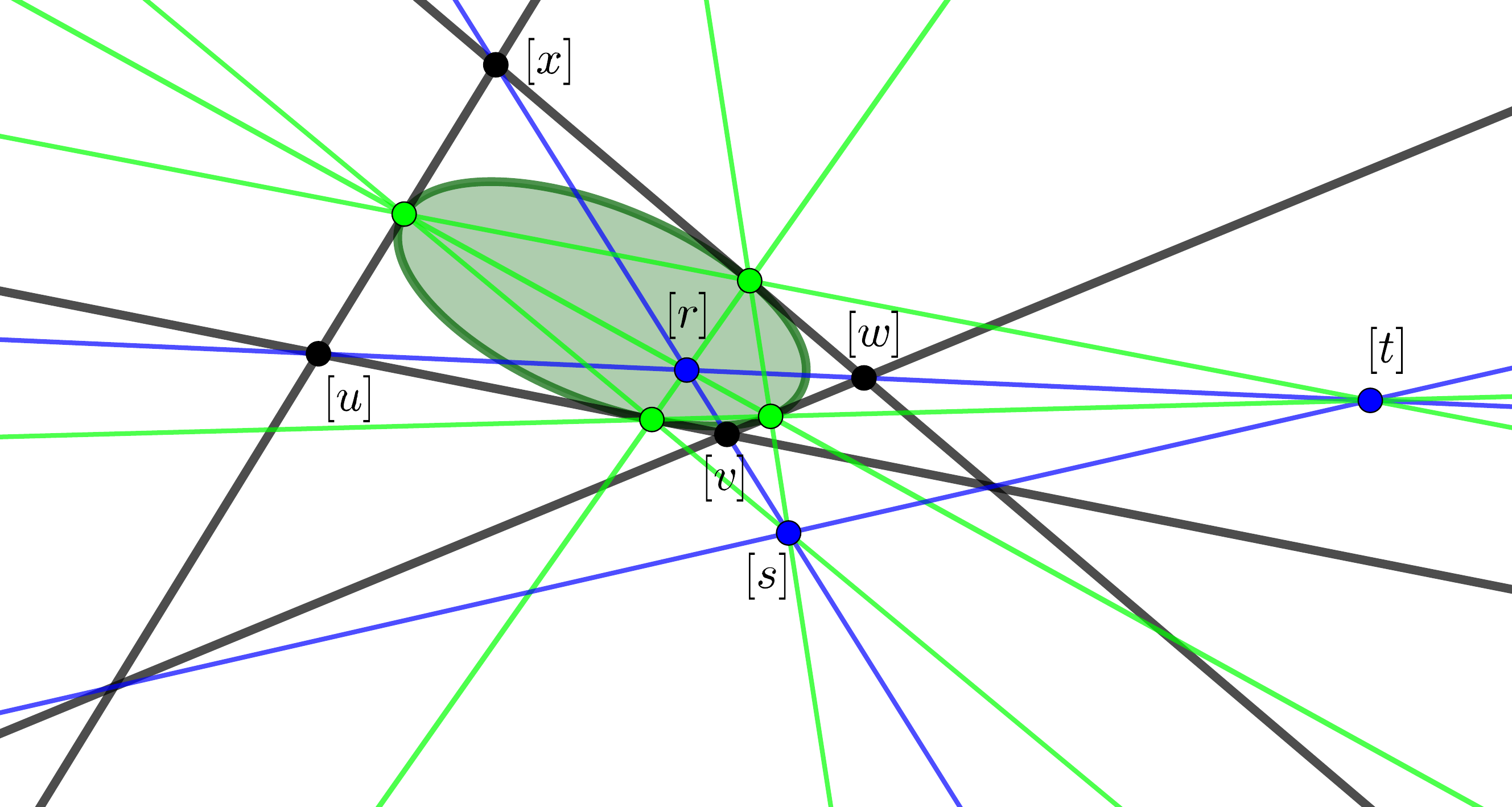}\]
\caption{By the complete quadrilateral theorem, ${\rm{cr}}([u], [r] , [w], [t]) = -1$ and ${\rm{cr}}([v], [r], [x], [s]) = -1$.}
\label{figure: proofoflemma}
\end{figure}

\begin{thm}\label{thm: inquad}
Let $[\a_1u +\b_1 v]$, $[\a_2 v +\b_2 w]$, $[\a_3 w +\b_3 x]$, $[\a_ 4 x+\b_4 u]$ be four distinct points that are distinct from the vertices of the quadrilateral $\Box([u], [v], [w], [x])$. These points determine a Ceva configuration for each of the triangles $\triangle([u], [v], [w])$, $\triangle ([v], [w], [x])$, $\triangle ([w], [x], [u])$ and $\triangle ([x], [u], [v])$. The intersection point of the diagonals of $\Box([u], [v], [w], [x])$ is a common point of the four Ceva configurations if and only if the points $[\a_1u +\b_1 v]$, $[\a_2 v +\b_2 w]$, $[\a_3 w +\b_3 x]$ and $[\a_ 4 x+\b_4 u]$ are the tangency points of a non-degenerate inscribed conic. (See Figure~\ref{figure: mainlemma}.)
\end{thm}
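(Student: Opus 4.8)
The plan is to prove the equivalence synthetically, in the harmonic picture of Figure~\ref{figure: proofoflemma}. Abbreviate $T_1 = [\a_1 u + \b_1 v]$, $T_2 = [\a_2 v + \b_2 w]$, $T_3 = [\a_3 w + \b_3 x]$, $T_4 = [\a_4 x + \b_4 u]$, let $M$ be the intersection of the diagonals, and put $P = ([u],[v]) \cap ([w],[x])$ and $Q = ([v],[w]) \cap ([x],[u])$. By the complete quadrilateral theorem the harmonic conjugate $t$ of $M$ with respect to $[u],[w]$ equals $([u],[w]) \cap (P,Q)$ and the harmonic conjugate $s$ of $M$ with respect to $[v],[x]$ equals $([v],[x]) \cap (P,Q)$, so $\{M,t,s\}$ is the diagonal triangle of the complete quadrilateral of the four edge-lines. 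The first step is to rewrite the Ceva hypothesis as incidences: applying Proposition~\ref{prop: cevamenelausharmonic}, the three points $T_1,T_2,M$ form a Ceva configuration on $\triangle([u],[v],[w])$ if and only if the line $(T_1,T_2)$ joining the two points on the edge-lines meets the diagonal side $([u],[w])$ at the harmonic conjugate of $M$, namely $t$; that is, iff $t\in(T_1,T_2)$ (the harmonic-conjugate construction being reversible). Running this on all four triangles, the four Ceva conditions with common point $M$ are equivalent to $t \in (T_1,T_2)$, $t \in (T_3,T_4)$, $s \in (T_2,T_3)$, $s \in (T_1,T_4)$; equivalently $(T_1,T_2)\cap(T_3,T_4)=t$ and $(T_2,T_3)\cap(T_1,T_4)=s$, and a short incidence check shows the remaining diagonal point $(T_1,T_3)\cap(T_2,T_4)$ is then $M$. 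In short, the hypothesis says that the diagonal triangle of the quadrangle $T_1T_2T_3T_4$ is $\{M,t,s\}$.

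For the implication from an inscribed conic, suppose $T_1,\dots,T_4$ are the contact points of a non-degenerate inscribed conic $\mathcal{C}$. The chord-of-contact relations identify $(T_1,T_2),(T_2,T_3),(T_3,T_4),(T_4,T_1)$ as the polars of $[v],[w],[x],[u]$; hence $(T_1,T_2)\cap(T_3,T_4)$ is the pole of the diagonal $([v],[x])$ and $(T_2,T_3)\cap(T_1,T_4)$ is the pole of $([u],[w])$. I would then invoke the classical fact that the diagonal triangle $\{M,t,s\}$ of a complete quadrilateral circumscribed about a conic is self-polar: this forces the two poles to be $t$ and $s$, which is exactly the system of incidences above, so the four Ceva conditions hold (Proposition~\ref{prop: opptangencypts} records the companion fact $M=(T_1,T_3)\cap(T_2,T_4)$).

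For the converse I would use the harmonic homologies $\sigma_M,\sigma_t,\sigma_s$ of the self-polar triangle $\{M,t,s\}$. From ${\rm cr}([u],M,[w],t)=-1={\rm cr}([v],M,[x],s)$ one reads off that $\sigma_M$ interchanges $[u]\leftrightarrow[w]$ and $[v]\leftrightarrow[x]$, hence swaps the edge-lines $\ell_1=([u],[v])\leftrightarrow\ell_3$ and $\ell_2\leftrightarrow\ell_4$, while $\sigma_t$ interchanges $[u]\leftrightarrow[w]$ and fixes $[v],[x]$, hence swaps $\ell_1\leftrightarrow\ell_2$ and $\ell_3\leftrightarrow\ell_4$. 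Since $M\in(T_1,T_3)$ runs through the centre of $\sigma_M$ we get $\sigma_M(T_1)\in(T_1,T_3)\cap\ell_3=T_3$, and since $t\in(T_1,T_2)$ runs through the centre of $\sigma_t$ we get $\sigma_t(T_1)\in(T_1,T_2)\cap\ell_2=T_2$; thus the group $\langle\sigma_M,\sigma_t\rangle$ carries $(\ell_1,T_1)$ onto each $(\ell_i,T_i)$. I would then let $\mathcal{C}$ be the conic with self-polar triangle $\{M,t,s\}$ that is tangent to $\ell_1$ at $T_1$ (the conics with this self-polar triangle form a two-parameter family, and the tangency imposes two conditions). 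Because every such conic is invariant under $\sigma_M,\sigma_t,\sigma_s$, transporting the contact $(\ell_1,T_1)$ by these homologies shows $\mathcal{C}$ is tangent to every $\ell_i$ at $T_i$, i.e. $\mathcal{C}$ is the required inscribed conic.

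I expect the main obstacle to be this backward direction, and within it the point needing care is that the conic singled out in the self-polar family is genuinely non-degenerate with the four contacts distinct: the hypotheses that the $T_i$ are distinct and distinct from the vertices should exclude the degenerate members (the line-pairs $\ell_1\ell_3,\ell_2\ell_4$ and the double chords), but this must be argued rather than assumed. To make every step fully rigorous, and to double-check the synthetic argument, I would also run the computation in the frame $[u]=[1,0,0]$, $[w]=[0,1,0]$, $[v]=[0,0,1]$, $[x]=[1,1,1]$, where $M=[1,1,0]$, $t=[1,-1,0]$, $s=[1,1,2]$: there the four Ceva conditions reduce to the bilinear relations $\a_1\a_2=\b_1\b_2$, $\a_2\a_3=-\b_2\b_3$, $\a_3\a_4=\b_3\b_4$, $\a_1\a_4=-\b_1\b_4$ (any three implying the fourth), and solving the linear system of Definition~\ref{defn: nondeginconic} for the Gram matrix of $\phi$ produces an explicit non-degenerate solution exactly under these relations, confirming both the equivalence and the non-degeneracy.
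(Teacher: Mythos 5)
Your proposal is correct, but it takes a genuinely different route from the paper's. The paper proves ``Ceva $\Rightarrow$ conic'' by normalising representatives so that the contact points are $[u+v],[v+w],[w+x],[x+u]$ with $[u+w]=[v+x]$, writing down an explicit bilinear form $\phi$, and excluding degeneracy via the classification of degenerate conics; it proves ``conic $\Rightarrow$ Ceva'' from Proposition~\ref{prop: opptangencypts} and the complete quadrilateral theorem, locating the concurrency point as $[\beta_1\beta_2 w+\alpha_1\alpha_2 u]$ via Menelaus and finishing with Ceva. You instead repackage the entire hypothesis, through Proposition~\ref{prop: cevamenelausharmonic}, as the statement that the diagonal triangle of the contact quadrangle $T_1T_2T_3T_4$ is $\{M,t,s\}$; then the forward direction becomes the classical self-polarity of the diagonal triangle of a circumscribed quadrilateral (a fact the paper never invokes), and the converse transports the single contact $(\ell_1,T_1)$ around by harmonic homologies. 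I checked your frame computation: the relations $\alpha_1\alpha_2=\beta_1\beta_2$, $\alpha_2\alpha_3=-\beta_2\beta_3$, $\alpha_3\alpha_4=\beta_3\beta_4$, $\alpha_1\alpha_4=-\beta_1\beta_4$ are exactly right, and the ``short incidence check'' that $(T_1,T_3)\cap(T_2,T_4)=M$ does hold --- but note it is not formal from the incidences at $t$ and $s$ alone; it genuinely uses the relations (e.g.\ $\alpha_1\beta_3=-\alpha_3\beta_1$), so spell it out. The step you flag closes cleanly without the coordinate fallback: in a frame with $M,t,s$ as fundamental points the self-polar family is $\lambda X^2+\mu Y^2+\nu Z^2=0$, and requiring the polar of $T_1$ to be $\ell_1$ fixes $(\lambda:\mu:\nu)$ uniquely, automatically places $T_1$ on the conic, and forces all three coefficients nonzero because $\ell_1$ avoids $M,t,s$ and $T_1$ avoids the sides of the triangle (for $T_1\neq \ell_1\cap\ell_3$ use $M\in(T_1,T_3)$ while $M\notin\ell_3$), so non-degeneracy is free. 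As for what each approach buys: the paper's explicit normalisation is reused later (in Lemma~\ref{lem: involution} and Lemma~\ref{lem: 2ndtangent}), whereas your polarity--homology picture makes the involution of Lemma~\ref{lem: involution} and the one-parameter family of Corollary~\ref{cor: 1para} structurally transparent, at the modest cost of importing the classical self-polar fact and some genericity bookkeeping.
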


\begin{proof}
Suppose that the intersection point of the diagonals of $\Box([u], [v], [w], [x])$ is a common point of the four Ceva configurations. Then, the representative vectors $u, v, w, x$ for the vertices of the quadrilateral can be chosen so that $[u+v], [v+w], [w+x], [x+u]$ are the points on the edge-lines and so that $[u+w] = [v+x]$, which is the intersection point of the two diagonals. The identity $[u+w] = [v+x]$ implies that there exists some non-zero $\c \in \R$ such that $\c(u+w) = v+x$. Equivalently, $x = \c u -v + \c w$.

Define a non-zero symmetric bilinear form $\phi$ on the basis $u, v, w \in \R^3$ by the following system of equations.
\begin{align*}
\phi(u,u) &= \phi (v,v) = \phi (w,w) = -\phi(u,v) = - \phi (v,w) = 1  \\ 
\phi(u,w) &= -\frac{\c + 1+1}{\c} 
\end{align*}
By substituting $x= \c u - v+ \c w$,  the following equations can be verified.
\begin{align*}
\phi(u,u) = \phi(v,v) = \phi(w,w) = \phi (x,x) = -\phi(u,v) = -\phi (v,w) = - \phi(w,x) = -\phi (x,u)
\end{align*}
Equivalently,
\begin{align*}
\phi(u,u+v) & = 0 = \phi(v,u+v)\\
\phi(v,v+w) & = 0 = \phi(w,v+w)\\
\phi(w,w+x) & = 0 =\phi(x,w+x)\\
\phi(x,x+u) & = 0 = \phi(u , x + u)
\end{align*}
Looking for a contradiction, suppose that the conic $\conic{\phi}$ is degenerate. By the classification of degenerate conics, which can be found in \cite{BergerII}, $\conic{\phi}$ is either a double point, a double line or a pair of lines. Because $\conic{\phi}$ is tangent to the four generic edge-lines of the quadrilateral, it cannot be a double point nor a pair of lines. $\conic{\phi}$ must be a double line. Then, the points $[u+v], [v+w], [w+x], [x+u]$ are collinear and the double line $\conic{\phi}$ also contains the points $[u-w]$ and $[v-x]$. By the complete quadrilateral theorem \cite{BergerI}, using the fact that $[u+w]=[v+x]$ is the intersection of the diagonals,  the harmonic ratios ${\rm{cr}}([u, [u+w], [v], [u-w]) = -1$ and ${\rm{cr}}([v], [v+x], [x], [v-x])=-1$ imply that $[u-w]$ and $[v-x]$ are contained in the line $(([v],[w])\cap ([x],[u]), ([u], [v]) \cap ([w],[x]))$.  Then, $[u+v] = [w+x]$ and $[v+w] = [x+u]$. This contradicts the assumption that the four points on the edge-lines are distinct. Therefore, the conic $\conic{\phi}$ is a non-degenerate inscribed conic.

Suppose that $[\a_1u +\b_1 v]$, $[\a_2 v +\b_2 w]$, $[\a_3 w +\b_3 x]$, $[\a_ 4 x+\b_4 u]$ are the four tangency points of a non-degenerate inscribed conic $\mathcal{C}$. By Proposition~\ref{prop: opptangencypts}, the lines $([\a_1u +\b_1 v], [\a_3 w +\b_3 x])$ and $([\a_2 v +\b_2 w], [\a_ 4 x+\b_4 u])$ are concurrent, say at $[r]$, with the two diagonals of the quadrilateral. Similarly, the two lines $([\a_2u +\b_2 v], [\a_3 v +\b_3 w]), ([\a_4 w +\b_4 x] , [\a_ 1 x+\b_1 u])$ are also concurrent, say at [s], with two of the diagonals of the complete quadrilateral and the two lines $([\a_1u +\b_1 v], [\a_2 v +\b_2 w]), ([\a_3 w +\b_3 x], [\a_ 4 x+\b_4 u])$ are concurrent, say at $[t]$, with two of the diagonals of the complete quadrilateral. See Figure~\ref{figure: proofoflemma}. By the complete quadrilateral theorem \cite{BergerI}, ${\rm{cr}}([u], [r] , [w], [t]) = -1$ and ${\rm{cr}}([v], [r], [x], [s]) = -1$. By Theorem~\ref{thm: cevamenelaus}, $[t] = [-\b_1\b_2 w + \a_1\a_2 u]$ because $[\a_1 u+ \b_1 v]$, $[\a_2 v + \b_2 w]$ and $[-\b_1\b_2 w + \a_1\a_2 u]$ are the points of a Menelaus configuration on the triangle $\triangle([u], [v], [w])$. Then, the harmonic ratio ${\rm{cr}}([u], [r] , [w], [t]) = -1$ implies that $[r] = [\b_1\b_2 w + \a_1\a_2 u]$. Then, by Theorem~\ref{thm: cevamenelaus}, the points $[r]$, $[\a_1 u + \b_1v]$ and $[\a_2 v + \b_2 w]$ form a Ceva configuration on the triangle $\triangle([u], [v], [w])$. Similarly, the point $[r]$ is a point of the three other Ceva configurations. 
\end{proof}

Theorem~\ref{thm: inquad} is a generalisation of the fact that Ceva configurations correspond to non-degenerate conics that are inscribed in triangles.

\begin{figure}[htbp] 
\[\includegraphics[width=0.55\textwidth]{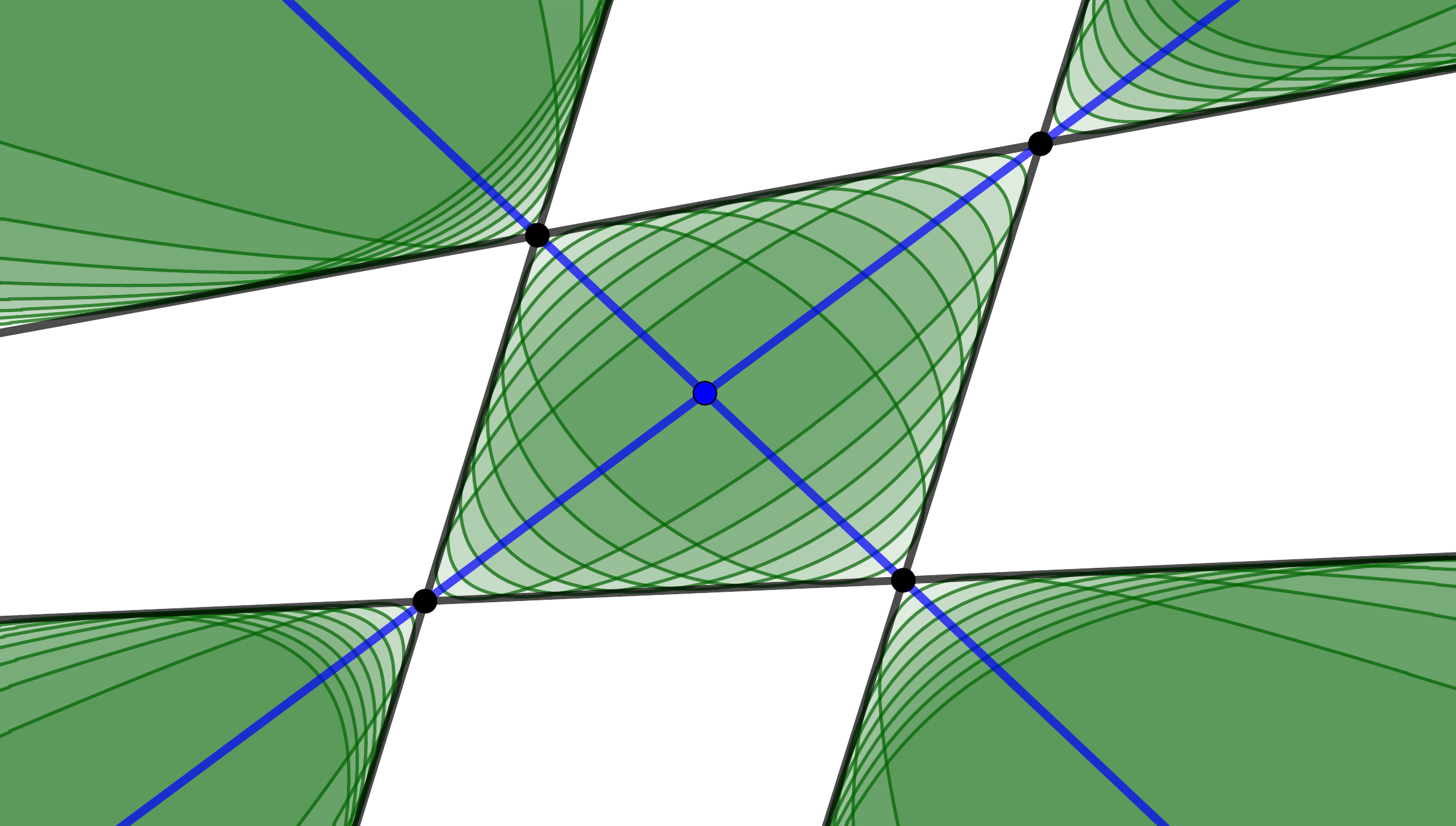}\]
\caption{For any quadrilateral, there is a $1$-parameter family of inscribed conics.}
\label{figure: oneparameterr}
\end{figure}

\begin{cor}\label{cor: 1para}
For any quadrilateral in $\P$, there exists a $1$-parameter family of inscribed conics. Any inscribed conic can be uniquely determined by specifying one of its tangency points which is not a vertex of the quadrilateral.
\end{cor}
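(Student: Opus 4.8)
The plan is to deduce the statement directly from Theorem~\ref{thm: inquad}, which reduces the existence and uniqueness of inscribed conics to a statement about Ceva configurations. Write $[p]$ for the intersection point of the two diagonals $([u],[w])$ and $([v],[x])$, and label the four prospective tangency points $P_1 = [\a_1 u + \b_1 v]$, $P_2 = [\a_2 v + \b_2 w]$, $P_3 = [\a_3 w + \b_3 x]$, $P_4 = [\a_4 x + \b_4 u]$ on the four edge-lines. By Theorem~\ref{thm: inquad}, these points are the tangency points of a non-degenerate inscribed conic if and only if $[p]$ is a common point of the Ceva configurations on the four triangles $\triangle([u],[v],[w])$, $\triangle([v],[w],[x])$, $\triangle([w],[x],[u])$ and $\triangle([x],[u],[v])$. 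Hence it suffices to fix one tangency point, say $P_1$ on the edge-line $([u],[v])$, and show that there is exactly one choice of $P_2, P_3, P_4$ for which $[p]$ is a common Ceva point.

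First I would translate each of the four Ceva conditions into a single multiplicative relation among oriented-length ratios by means of Theorem~\ref{thm: cevamenelaus}. On each triangle, two of its edges are edge-lines of the quadrilateral (carrying two consecutive tangency points) while the third is a diagonal (carrying $[p]$). Writing $r_i$ for the ratio in which $P_i$ divides its edge-line and $d_1, d_2$ for the ratios in which $[p]$ divides the two diagonals, the four Ceva conditions take the form $r_1 r_2 = d_1$, $r_2 r_3 = d_2$, $r_3 r_4 = d_1^{-1}$ and $r_4 r_1 = d_2^{-1}$.

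The key observation is that these four relations are compatible and underdetermined in exactly the right way. Given $r_1$, the first three relations successively determine $r_2$, $r_3$ and $r_4$, while multiplying the relations in either of the two cyclic pairings both yield $r_1 r_2 r_3 r_4 = 1$, the diagonal ratios $d_1, d_2$ cancelling in pairs. Consequently the fourth relation $r_4 r_1 = d_2^{-1}$ holds automatically, so prescribing $P_1$ forces $P_2, P_3, P_4$ uniquely and consistently. By Theorem~\ref{thm: inquad} there is then exactly one non-degenerate inscribed conic with the given tangency point $P_1$, which establishes the uniqueness assertion. Letting $P_1$ range over the edge-line $([u],[v])$, away from the vertices and from the finitely many positions at which two of the constructed points collide, exhibits the inscribed conics as a $1$-parameter family.

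The only genuine subtlety is the closure step, namely that the fourth Ceva configuration is forced by the other three, and I expect this to be the main point to get right. Once the Ceva conditions are recorded multiplicatively, however, it is immediate: the constraint they jointly impose on $(r_1,r_2,r_3,r_4)$ is the single cyclic relation $r_1 r_2 r_3 r_4 = 1$ rather than two independent ones. A minor remaining care is to check that for generic $P_1$ the four points are mutually distinct and distinct from the vertices, as required by the hypotheses of Theorem~\ref{thm: inquad}.
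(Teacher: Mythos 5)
Your proof is correct, and its skeleton is the same as the paper's: fix one prospective tangency point, propagate Ceva configurations through the four triangles $\triangle([u],[v],[w])$, $\triangle([v],[w],[x])$, $\triangle([w],[x],[u])$, $\triangle([x],[u],[v])$ (each carrying two consecutive tangency points and the diagonal intersection point), and conclude via Theorem~\ref{thm: inquad}. Where you genuinely differ is the closure step, which you correctly identify as the crux. The paper disposes of it by citing the incidence theorem of \cite{Meditations}, whereas you prove it inline: your multiplicative bookkeeping is accurate --- with $d_1$, $d_2$ the ratios in which the diagonal point divides the two diagonals, the point enters the Ceva products of the first and third triangles with reciprocal ratios, and likewise for the second and fourth, giving exactly $r_1r_2=d_1$, $r_2r_3=d_2$, $r_3r_4=d_1^{-1}$, $r_4r_1=d_2^{-1}$, so that the first three relations force the fourth. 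In effect you reprove the cited incidence theorem in the special case needed, which makes the argument self-contained; the paper's citation is shorter but imports an external result. Two minor points of care: your phrase that the joint constraint ``is the single cyclic relation $r_1r_2r_3r_4=1$'' is loose (the system imposes three independent relations with one redundancy), but the explicit derivation you give beforehand is the correct statement; and since oriented-length ratios are affine quantities, one should strictly fix an affine chart in which the relevant points are finite --- a technicality the paper glosses over equally whenever it invokes Theorem~\ref{thm: cevamenelaus} in $\P$. Your closing caveat about excluding finitely many positions of $P_1$ is in fact slightly more careful than the paper's own proof: the bad position corresponds precisely to the degenerate double-line conic noted after Lemma~\ref{lem: involution}, and Theorem~\ref{thm: inquad} does require four distinct non-vertex points.
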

\begin{proof}
Consider a quadrilateral $\Box([u], [v], [w], [x])$ in $\P$. Let $[r]$ be the intersection of the diagonals of the quadrilateral. Choose a point of the quadrilateral that is distinct from the vertices of the quadrilateral, say $[\a_1 u+\b_1 v]$ as shown in Figure~\ref{figure: mainlemma}. Construct $[\a_2 v + \b_2 w]$ so that the points $[\a_1 u+\b_1 v]$, $[\a_2 v + \b_2 w]$ and $[r]$ form a Ceva configuration on the triangle $\triangle([u], [v], [w])$. Construct $[\a_3 w+ \b_3 x]$ so that the points $[\a_2 v + \b_2 w]$, $[\a_3 w+ \b_3 x]$ and $[r]$ form a Ceva configuration on the triangle $\triangle ([v], [w], [x])$. Construct $[\a_4 x+ \b_4 u]$ so that the points $[\a_3 w+ \b_3 x]$, $[\a_4 x+ \b_4 u]$ and $[r]$ form a Ceva configuration on the triangle $\triangle ([w], [x], [u])$. By using Ceva's theorem, the incidence theorem in \cite{Meditations} ensures that the points $[\a_4 x+ \b_4 u]$, $[\a_1 u+\b_1 v]$ and $[r]$ form a Ceva configuration on the triangle $\triangle ([x], [u], [v])$. By Theorem~\ref{thm: inquad}, the points $[\a_1 u+\b_1 v]$,  $[\a_2 v + \b_2 w]$, $[\a_3 w+ \b_3 x]$, $[\a_4 x+ \b_4 u]$ are the tangency points of an inscribed conic.
\end{proof}

\begin{figure}[htbp]
\begin{center}
\includegraphics[width =0.8\textwidth]{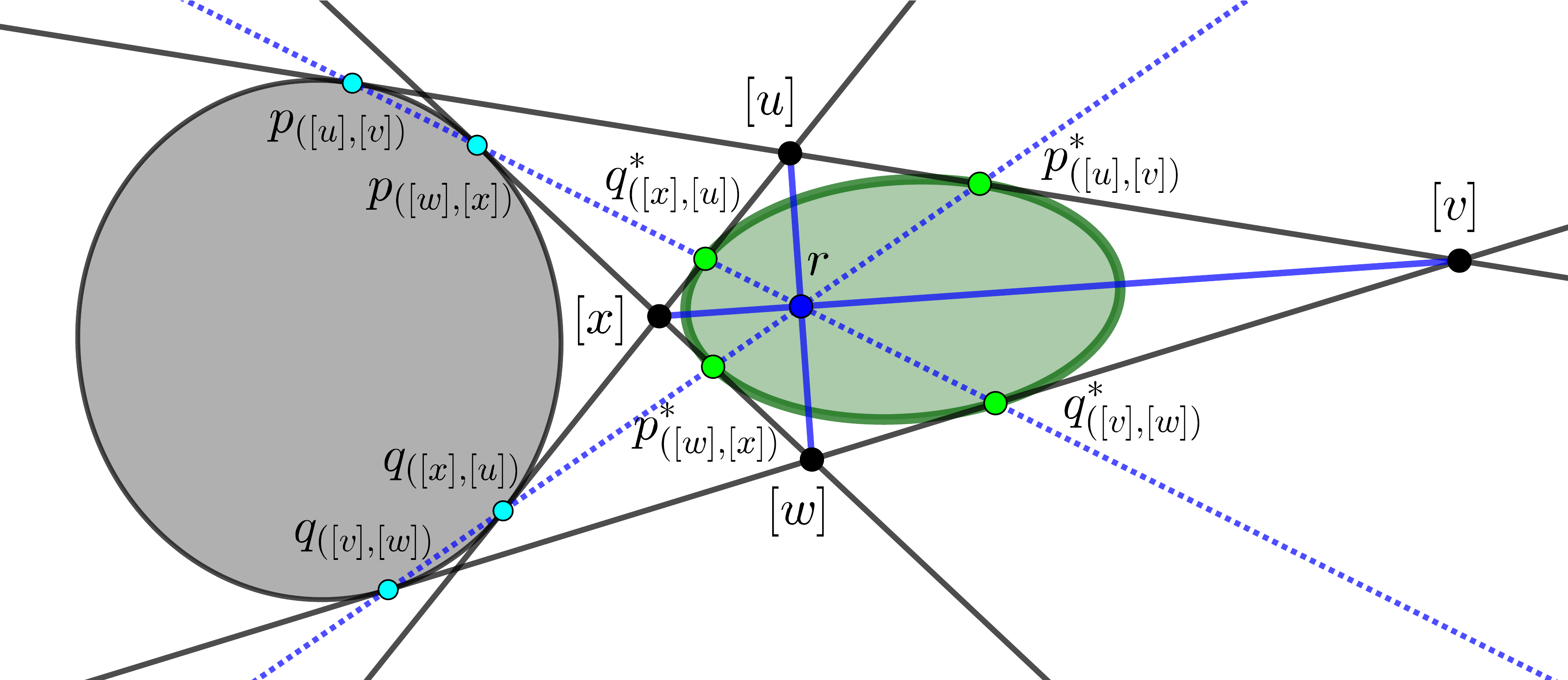}
\end{center}
\caption{The points $p_{([u],[v])}$, $p_{([w],[x])}$, $q_{([x],[u])}$ and $q_{([v],[w])}$ are the tangency points of an inscribed conic. Lemma~\ref{lem: involution} defines an inscribed conic such that the points $p_{([u],[v])}$, $p_{([w],[x])}$, $r$, $q_{([x],[u])}^\ast$, $q_{([v],[w])}^\ast$ are collinear and such that the points $q_{([x],[u])}$, $q_{([v],[w])}$, $r$, $p_{([u],[v])}^\ast$, $p_{([w],[x])}^\ast$ are collinear.}
\label{figure: involution}
\end{figure}

\begin{lem}\label{lem: involution}
Consider a quadrilateral $\Box([u], [v], [w], [x])$ in $\P$ and let $r$ be the intersection point of the diagonals. For any inscribed conic $\mathcal{C}$, let  $p_{([u],[v])}$, $p_{([w],[x])}$, $q_{([x],[u])}$ and $q_{([v],[w])}$ be the tangency points which are labelled by their tangent lines. As shown in Figure~\ref{figure: involution}, draw the two lines containing the collinear points $\{p_{([u],[v])}, r, p_{([w],[x])}\}$ and $\{q_{([x],[u])}, r, q_{([v],[w])})\}$ to construct the points $q_{([v],[w])}^\ast$, $q_{([x],[u])}^\ast$, $p_{([u],[v])}^\ast$, $p_{([w],[x])}^\ast$ which are labelled by their tangent lines. Then, the points $p_{([u],[v])}^\ast$, $p_{([w],[x])}^\ast$, $q_{([v],[w])}^\ast$, $q_{([x],[u])}^\ast$ are the tangency points of an inscribed conic.
\end{lem}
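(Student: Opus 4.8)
The plan is to reduce the statement to the Ceva-configuration criterion of Theorem~\ref{thm: inquad} and then carry out the verification in adapted homogeneous coordinates. First I would normalise the representative vectors so that the diagonal intersection $r$ takes a symmetric form. Since $r$ lies on both diagonals $([u],[w])$ and $([v],[x])$, I can rescale $u,v,w,x$ so that $r = [u+w] = [v+x]$, equivalently $x = u - v + w$. Taking $u,v,w$ as the standard basis of $\R^3$, this gives $u=(1,0,0)$, $v=(0,1,0)$, $w=(0,0,1)$, $x=(1,-1,1)$ and $r=(1,0,1)$.

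Next I would record what Theorem~\ref{thm: inquad} says in these coordinates. Writing the points on the edge-lines as $[\a_1 u + \b_1 v]$, $[\a_2 v + \b_2 w]$, $[\a_3 w + \b_3 x]$, $[\a_4 x + \b_4 u]$ with coefficient ratios $s_i := \a_i/\b_i$, the computation in the proof of Theorem~\ref{thm: inquad} shows that the diagonal point completing the Ceva configuration on $\triangle([u],[v],[w])$ is $[\a_1\a_2 u + \b_1\b_2 w]$; requiring this to equal $r = [u+w]$ gives the single relation $s_1 s_2 = 1$, and cyclically $s_2 s_3 = s_3 s_4 = s_4 s_1 = 1$ on the remaining three triangles. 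Hence, by Theorem~\ref{thm: inquad}, the four points are the tangency points of a non-degenerate inscribed conic precisely when $s_1 = s_3$, $s_2 = s_4$ and $s_1 s_2 = 1$; equivalently, the inscribed conics form the one-parameter family (cf. Corollary~\ref{cor: 1para})
\[
p_{([u],[v])} = [tu + v], \quad q_{([v],[w])} = [v + tw], \quad p_{([w],[x])} = [tw + x], \quad q_{([x],[u])} = [x + tu],
\]
indexed by a parameter $t$.

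With this in hand the lemma becomes an explicit calculation. I would compute the two lines $L_1 = (p_{([u],[v])}, p_{([w],[x])})$ and $L_2 = (q_{([v],[w])}, q_{([x],[u])})$ as cross products of the representative vectors, obtaining lines with coordinate vectors $(1,-t,-1)$ and $(1,t,-1)$, each of which annihilates $r=(1,0,1)$. Intersecting $L_1$ with the edge-lines $([v],[w])$ and $([x],[u])$, and $L_2$ with $([u],[v])$ and $([w],[x])$, yields the four starred points; reading off their coefficient ratios gives $s_1^\ast = -t$, $s_2^\ast = -1/t$, $s_3^\ast = -t$ and $s_4^\ast = -1/t$. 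These satisfy $s_1^\ast = s_3^\ast$, $s_2^\ast = s_4^\ast$ and $s_1^\ast s_2^\ast = 1$, so by the criterion established above they are the tangency points of an inscribed conic; in fact the member of the family with parameter $-t$, so that the whole construction is the involution $t \mapsto -t$ fixing $r$, $L_1$ and $L_2$.

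The point I expect to be the real content, rather than the bookkeeping, is the last identity $s_1^\ast s_2^\ast = 1$. The construction forces the opposite-tangency lines of the starred points to be $L_2$ and $L_1$, which pass through $r$; by Proposition~\ref{prop: opptangencypts} this is only the necessary condition for an inscribed conic, and it corresponds to $s_1^\ast = s_3^\ast$ and $s_2^\ast = s_4^\ast$ alone. What must still be checked --- and what the sign-reversal $t \mapsto -t$ delivers --- is the stronger common-point condition $s_1^\ast s_2^\ast = 1$ of Theorem~\ref{thm: inquad}. I would therefore present the computation of the four ratios in full, since it is exactly this step that distinguishes a genuine inscribed conic from four edge-points whose opposite joins merely pass through the diagonal intersection.
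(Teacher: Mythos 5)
Your proof is correct and takes essentially the same route as the paper: both arguments normalise representative vectors so that $r=[u+w]=[v+x]$ and reduce the claim to the Ceva-configuration criterion of Theorem~\ref{thm: inquad}, applied once to the given tangency points and once to the starred ones. The only difference is a gauge choice --- the paper scales the representatives so that the given tangency points are $[u+v],[v+w],[w+x],[x+u]$, making the starred points the sign-flips $[u-v],[-v+w],[w-x],[-x+u]$ and the final Ceva check immediate, whereas you fix $x=u-v+w$ and carry the conic parameter $t$ through the pencil, which moreover exhibits the involution explicitly as $t\mapsto -t$ (consistent with the degenerate case $t=1$ noted in the remark following the lemma).
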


\begin{proof}
By Theorem~\ref{thm: inquad}, the representative vectors $u, v, w, x$ for the vertices of the quadrilateral can be chosen so that $p_{([u],[v])}= [u+v], q_{([v],[w])} = [v+w], p_{([w],[x])} = [w+x], q_{([x],[u])} = [x+u]$ and so that $ r = [u+w]= [v+x]$. Then, $p_{([u],[v])}^\ast = [u-v]$, $p_{([w],[x])}^\ast = [w-x]$, $q_{([x],[u])}^\ast= [x-u]$ and $q_{([w],[v])}^\ast = [w-v]$. The point $[u+w] = [v+x]$ is a point of the four Ceva configurations that are determined by the points $[u-v], [-v+w], [w-x], [-x+u]$ of the triangles $\triangle([u], [v], [w])$, $\triangle ([v], [w], [x])$, $\triangle ([w], [x], [u])$, $\triangle ([x], [u], [v])$. Therefore, by Theorem~\ref{thm: inquad}, the points $[u-v], [-v+w], [w-x], [-x+u]$ are the tangency points of an inscribed conic.
\end{proof}

For any quadrilateral $\Box([u], [v], [w], [x])$ in $\P$, Lemma~\ref{lem: involution} establishes an involution on the $1$-parameter family of inscribed conics. However, there is one degenerate case. For any quadrilateral, there is a unique inscribed conic that is projectively equivalent to a circle inscribed in a square. It is mapped under the involution to a degenerate inscribed conic, namely the double line $(([v],[w])\cap ([x],[u]), ([u], [v]) \cap ([w],[x]))$. We are mostly interested in the generic case.

\section{Nets of planar quadrilaterals with touching inscribed conics}\label{section: nets}

\subsection{Porism}\label{subsection: porism}

In projective space $\Pn$, $n \geq 2$, \emph{nets of planar quadrilaterals} (or Q-nets) are discrete surfaces that are defined by gluing together planar quadrilaterals. By definition, two planar quadrilaterals are glued together if and only if they have two common vertices on a common edge-line. Nets of planar quadrilaterals with touching inscribed conics are nets of planar quadrilaterals such that each planar quadrilateral is equipped with an inscribed conic such that, for any two neighbouring quadrilaterals, the two inscribed conics have the same tangency point on their common edge-line.

A \emph{loop of planar quadrilaterals} is a net of planar quadrilaterals where every quadrilateral is glued with exactly two other quadrilaterals. A loop of planar quadrilaterals is called bipartite if the vertices can be bicoloured so that the vertices have different colours if they share an edge.

\begin{figure}[htbp]
\begin{center}
  \begin{subfigure}[t]{0.4\textwidth}
    \includegraphics[width=\textwidth]{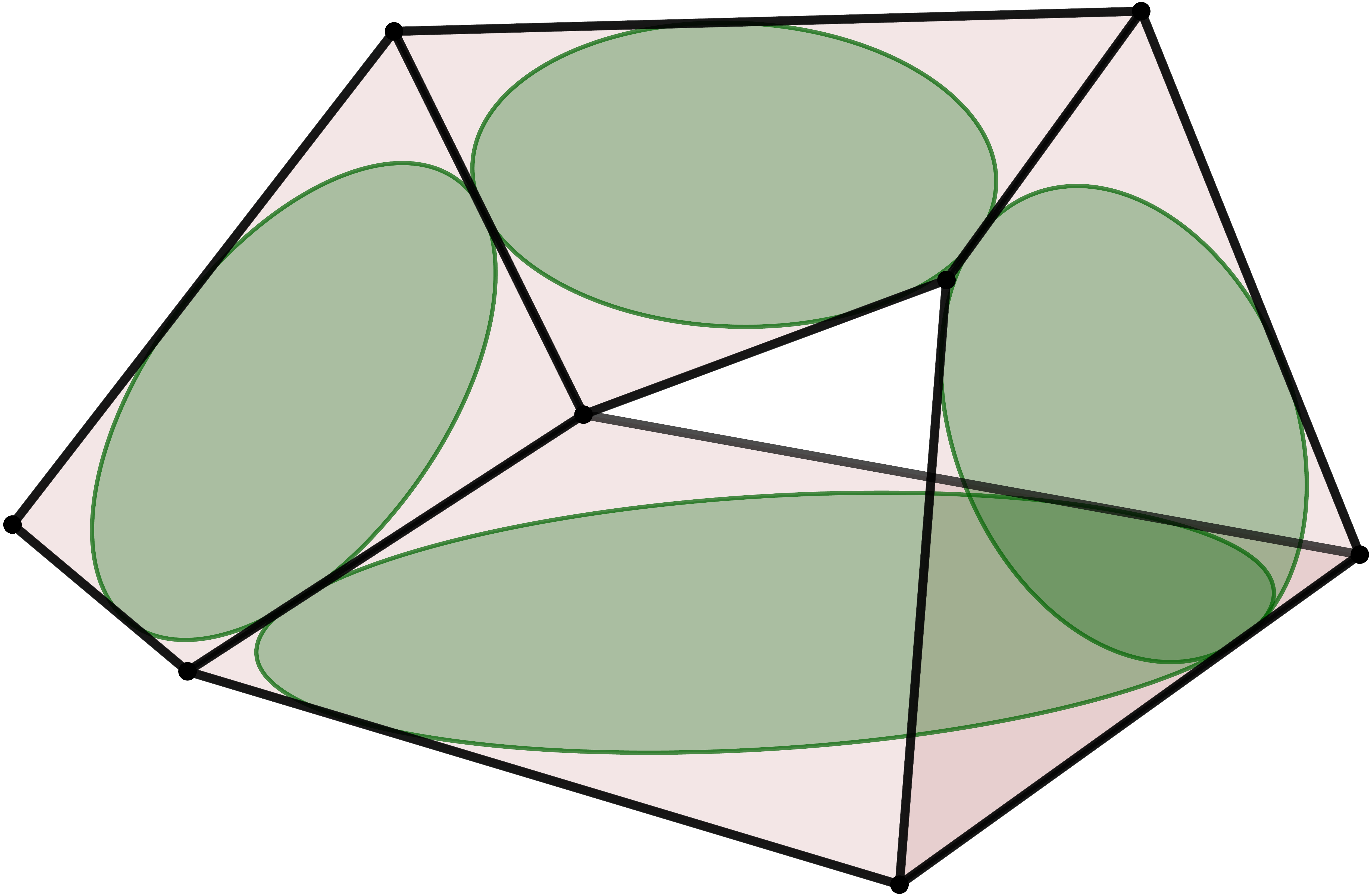}
 \end{subfigure}
  \hspace{0.5cm}%
  \begin{subfigure}[t]{0.4\textwidth}
    \includegraphics[width=\textwidth]{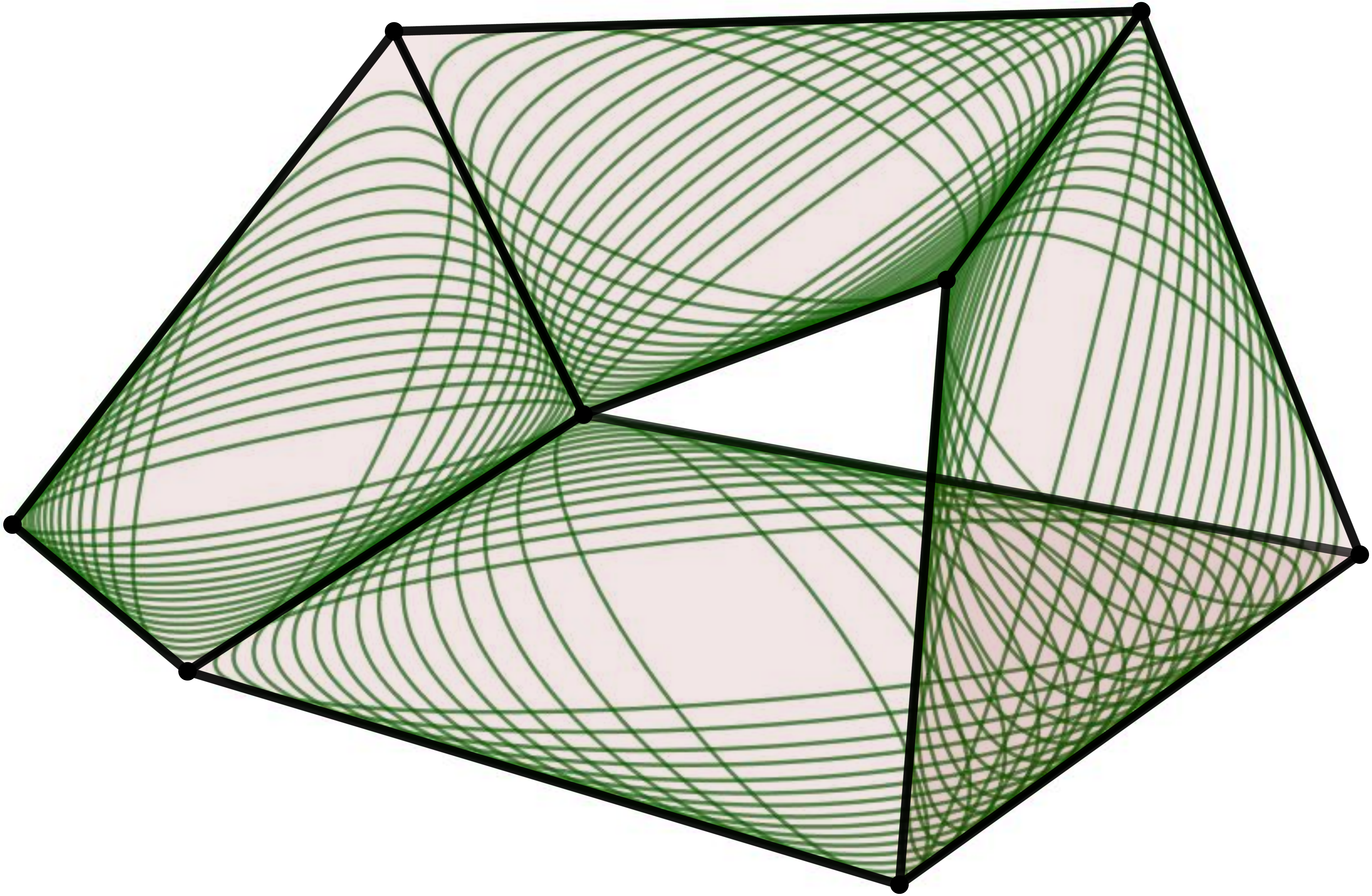}
  \end{subfigure}
  \end{center}
\caption{If a bipartite loop of planar quadrilaterals admits one instance of touching inscribed conics, then it admits a $1$-parameter family of touching inscribed conics.}  \label{figure: mobiusloop}
\end{figure}

\begin{thm}\label{thm: porismloop}
Consider a bipartite loop of finitely many planar quadrilaterals in projective space $\Pn$, $n \geq 2$. If it admits one instance of touching inscribed conics, then it admits a $1$-parameter family of touching inscribed conics.
\end{thm}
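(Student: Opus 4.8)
The plan is to reduce the statement to a monodromy computation along the loop, and then to use the bipartite structure to show that this monodromy is a projective \emph{scaling}, which must be the identity as soon as it possesses a single non-degenerate fixed point. I would begin by exploiting the bipartite colouring to fix a global parametrisation. Choose representative vectors for all vertices of the loop once and for all, and colour the vertices black and white so that every edge joins a black vertex to a white one. On an edge-line $([b],[w])$ with $[b]$ black and $[w]$ white, parametrise its points as $[b+t\,w]$, so that the black vertex is $t=0$ and the white vertex is $t=\infty$; by hypothesis this convention is globally coherent precisely because the loop is bipartite. Label the quadrilaterals $Q_1,\dots,Q_N$ cyclically, let $e_j$ be the edge glued between $Q_j$ and $Q_{j+1}$, and recall from Corollary~\ref{cor: 1para} that an inscribed conic of $Q_j$ is determined by its tangency point on any one edge. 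An assignment of touching inscribed conics is then the same as a point $t$ on a chosen base edge together with the requirement that propagating $t$ once around the loop returns the same point.

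Next I would compute the propagation map inside a single quadrilateral. For $Q_j=\Box([u],[v],[w],[x])$ the two glued edges are two of its four sides, and around the four-cycle $u\!-\!v\!-\!w\!-\!x\!-\!u$ the colouring necessarily alternates, say $u,w$ black and $v,x$ white. By Theorem~\ref{thm: inquad} I may choose representatives with $r=u+w=v+x$ for the intersection of the diagonals; then the tangency points of the inscribed conic with parameter $t$ are $[u+t\,v]$, $[w+t\,v]$, $[w+t\,x]$, $[u+t\,x]$ on the four sides. The crucial observation is that, read off with the black–white convention as (white coefficient)$/$(black coefficient), all four carry the \emph{same} multiplicative parameter $t$. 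Hence the map sending the tangency parameter on one glued edge of $Q_j$ to that on the other fixes both $0$ and $\infty$ (the black and the white vertex), so it is a scaling $t\mapsto\lambda_j t$ once one passes from these adapted representatives to the fixed global ones. Matching two neighbouring conics across $e_j$ means they share the tangency point, i.e. they carry equal parameter in the common convention, so each gluing contributes the identity.

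It then follows that the monodromy $M$ of propagating once around the loop is the scaling $t\mapsto Kt$ with $K=\prod_j\lambda_j$. A single instance of touching inscribed conics is exactly a fixed point $t_\ast$ of $M$; since the inscribed conics are non-degenerate, no tangency point is a vertex (Definition~\ref{defn: nondeginconic}), so $t_\ast\notin\{0,\infty\}$ and $Kt_\ast=t_\ast$ forces $K=1$. Therefore $M$ is the identity, every parameter value closes up, and---after discarding the finitely many values at which some inscribed conic degenerates---one obtains a one-parameter family of touching inscribed conics.

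The main obstacle is the second step: establishing that the per-quadrilateral propagation is multiplicative, i.e. that it fixes $0$ and $\infty$ rather than being a general M\"obius transformation of the line. This is exactly where bipartiteness is indispensable, since it is what lets the black-at-$0$, white-at-$\infty$ convention be chosen coherently on every edge simultaneously, thereby synchronising the reciprocal relations arising from the Ceva configurations of Theorem~\ref{thm: inquad} into genuine scalings. Without a global two-colouring the monodromy would be an arbitrary projectivity, generically carrying two fixed points, and a single instance would not propagate to an entire family; so the care in the argument lies in organising the colouring and in checking the non-degeneracy that places $t_\ast$ away from $0$ and $\infty$.
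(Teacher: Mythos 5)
Your proof is correct, and its skeleton is the same as the paper's: propagate the tangency point quadrilateral by quadrilateral around the loop, observe that the resulting monodromy on the base edge-line is a projective transformation of $\mathbb{P}^1$, use bipartiteness to force the two vertices of the base edge to be fixed points, and let the given touching instance supply a third fixed point, whence the monodromy is the identity. Where you genuinely diverge is in how the per-quadrilateral propagation map is established. The paper defines each step $f_i\colon l_{i-1}\to l_i$ geometrically as a central projection, with a case distinction according to whether the two glued edges of $Q_i$ are opposite sides (centre $r_i$, the diagonal point) or adjacent sides (centre the harmonic conjugate $r_i^{\ast}$, via Proposition~\ref{prop: cevamenelausharmonic}). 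Your coherent black-at-$0$, white-at-$\infty$ convention, combined with the normalisation $u+w=v+x$ and the equal-parameter pencil $[u+tv]$, $[w+tv]$, $[w+tx]$, $[u+tx]$ --- a fact the paper only makes explicit later, inside the proof of Lemma~\ref{lem: 2ndtangent} --- treats both cases uniformly and exhibits every step as a scaling $t\mapsto\lambda_j t$, so the full monodromy is transparently $t\mapsto Kt$ and a single fixed point $t_\ast\notin\{0,\infty\}$ (guaranteed by Definition~\ref{defn: nondeginconic}) forces $K=1$. What your route buys is the elimination of the case analysis and an explicit multiplicative normal form for the monodromy; what the paper's route buys is that the maps $f_i$ are defined without any global colouring, so the identical construction is reused verbatim for non-bipartite loops in Theorem~\ref{thm: porismloopnotbipartite}, where your colouring convention is unavailable. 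Two small points you should make explicit: that \emph{every} inscribed conic of $Q_j$ occurs in the equal-parameter family follows from the uniqueness statement of Corollary~\ref{cor: 1para} (an inscribed conic is determined by one tangency point), and each quadrilateral contributes exactly one degenerate parameter value (the double-line case noted after Lemma~\ref{lem: involution}), which justifies your ``discard finitely many values'' step.
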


\begin{proof}
Enumerate the quadrilaterals $\{Q_i\}_{i=1, \ldots, n}$ of the bipartite loop so that $Q_{i}$ and $Q_{i+1}$ are neighbouring quadrilaterals for any $i \in \Z/n\Z$. Let $l_i$ denote the common edge-line of the two neighbouring quadrilaterals $Q_{i}$ and $Q_{i+1}$. Let $r_i$ denote the intersection of the diagonals of $Q_i$. For each $i \in \Z/n\Z$, define a central projection $f_i: l_{i-1} \to l_{i}$. There are two cases to consider. First, suppose that the lines $l_{i-1}$ and $l_i$ do not intersect at a vertex of the quadrilateral $Q_i$. Then $f_i$ is defined to be the central projection with centre $r_i$. Second, suppose that lines $l_{i-1}$ and $l_i$ do intersect at a vertex of the quadrilateral $Q_i$. Then, the two neighbouring quadrilaterals of $Q_i$ have the incidence structure as shown in Figure~\ref{figure: centralprojection} and the map $f_i : l_{i-1} \to l_i$ is defined so that, for all $q\in l_{i-1}$ distinct from the vertices of $Q_i$, the points $q$, $f_i(q)$ and $r_i$ form a Ceva configuration on the triangle whose vertices are the vertices of $Q_i$ that are also vertices of $Q_{i-1}$ or $Q_{i+1}$. Equivalently, by Proposition~\ref{prop: cevamenelausharmonic}, the map $f_i$ is the central projection whose centre is the point $r_i^\ast$ such that ${\rm{cr}}(p, r_i, f_i(p), r_i^\ast) = -1$ as shown in Figure~\ref{figure: centralprojection}.

\begin{figure}[htbp!]
\begin{center}
\includegraphics[width =0.5\textwidth]{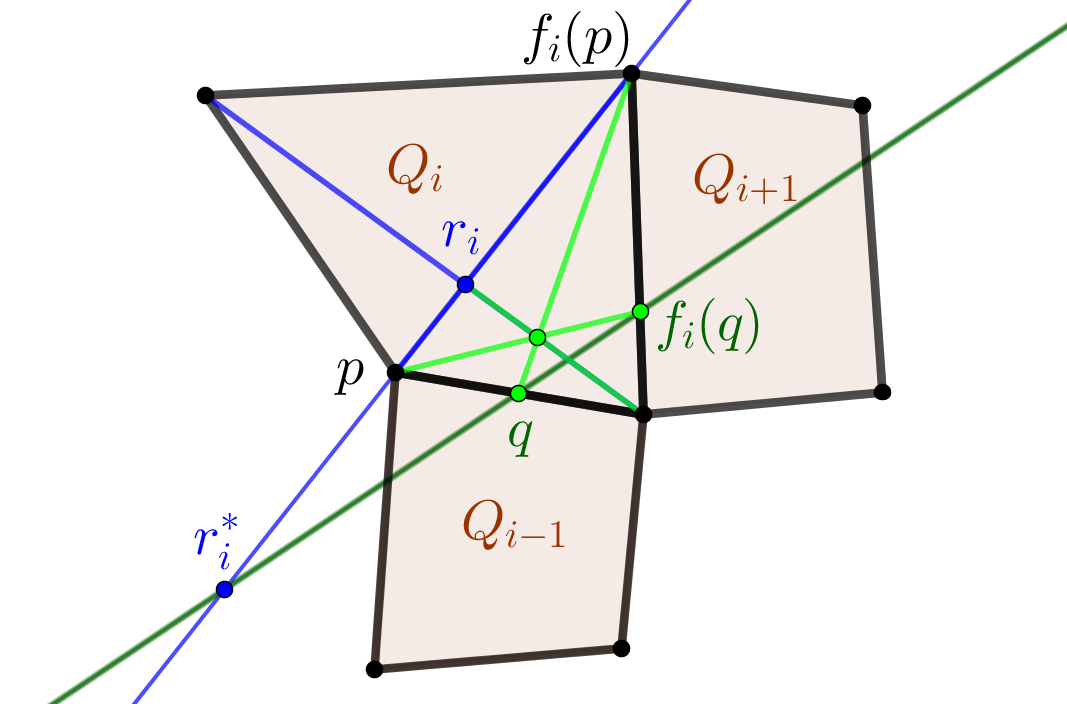}
\end{center}
\caption{The central projection $f_i$ in the case when the quadrilateral $Q_i$ has two neighbouring quadrilaterals sharing a vertex. The centre $r_i^\ast$ of $f_i$ is  defined so that ${\rm{cr}}(p, r_i, f_i(p), r_i^\ast)=-1$. Equivalently, the points $q$, $f_i(q)$ and $r_i$ form a Ceva configuration.} 
\label{figure: centralprojection}
\end{figure}

Because the loop is bipartite, the two common vertices of the quadrilaterals $Q_1$ and $Q_n$ are two fixed points of the projective transformation $f := f_{n} \circ f_{n-1} \circ \ldots \circ f_2 \circ f_1 :l_0 \to l_0$. Suppose that the bipartite loop admits an instance of touching inscribed conics. Then, Theorem~\ref{thm: inquad} implies that the  touching point on the line $l_0$ is a also a fixed point of the projective transformation $f$. Thus, $f \equiv id$. Using Theorem~\ref{thm: inquad}, this shows that the loop admits a $1$-parameter family of touching inscribed conics.
\end{proof}

\begin{figure}[htbp]
\begin{center}
\includegraphics[width =0.6\textwidth]{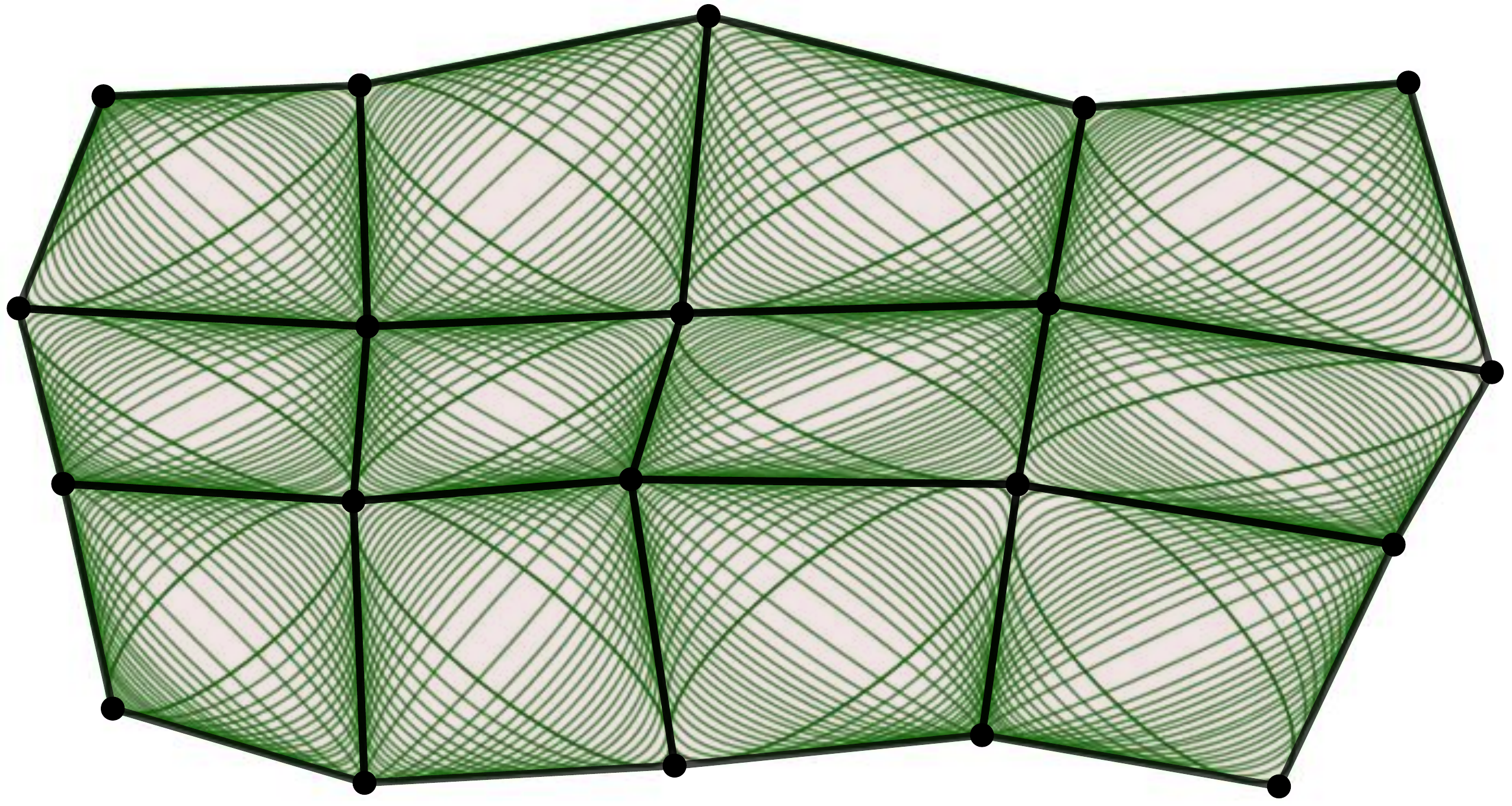}
\end{center}
\caption{If a Q-net admits an instance of touching inscribed conics, then it admits a $1$-parameter family of touching inscribed conics and, by Theorem~\ref{thm: Koenigs}, it is a Koenigs net.}
\label{figure: porism4x3}
\end{figure}

\begin{cor}\label{cor: porismSG}
A Q-net $f: \Z^2 \to \Pn$, $n \geq 2$,  admits an instance of touching inscribed conics if and only if it admits a $1$-parameter family of touching inscribed conics. (See Figure~\ref{figure: porism4x3}.)
\end{cor}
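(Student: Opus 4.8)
The plan is to reduce the statement for the infinite grid to the porism for a single loop, Theorem~\ref{thm: porismloop}, applied to the elementary loops of four quadrilaterals surrounding each interior vertex of the net. One direction is immediate: a $1$-parameter family of touching inscribed conics contains, in particular, a single instance. For the converse I would introduce a parallel transport of candidate tangency points along the net and prove that it is flat.

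First I would record the local propagation rule. By Corollary~\ref{cor: 1para}, fixing a single tangency point on one edge-line of an elementary quadrilateral $Q$ determines its inscribed conic uniquely, and hence the tangency points on the other three edge-lines of $Q$. Thus for any two edge-lines $e,e'$ of $Q$ there is a well-defined projective map $e\to e'$ sending the tangency point on $e$ to the tangency point on $e'$ of the inscribed conic through the chosen point: when $e,e'$ are opposite this is the central projection through the diagonal intersection of $Q$ (Proposition~\ref{prop: opptangencypts}), and when they are adjacent it is the Ceva map of Theorem~\ref{thm: inquad}. These are exactly the maps $f_i$ used in the proof of Theorem~\ref{thm: porismloop}. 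Composing them along a path in the dual grid (whose nodes are the quadrilaterals and whose links record gluings) transports a candidate tangency point from one edge-line of the net to another.

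Second I would argue that this transport is path-independent. Regarding the net as drawn in the plane, the quadrilaterals and their gluings form a connected planar dual grid whose bounded faces correspond bijectively to the interior vertices $V$ of the net, the boundary of each face being the $4$-cycle of quadrilaterals surrounding $V$. Since the cycle space of a connected planar graph is generated by its bounded-face boundaries, it suffices to show that the monodromy around each such $4$-cycle is the identity. This is where Theorem~\ref{thm: porismloop} enters: the four quadrilaterals around $V=f(i,j)$ form a \emph{bipartite} loop --- colouring each vertex $f(i,j)$ by the parity of $i+j$ is a proper two-colouring --- and their four common edge-lines all pass through $V$, so we are in the degenerate case treated in the proof of the theorem. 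The restriction of the given instance of touching inscribed conics to these four quadrilaterals is an instance for the loop, so the associated monodromy $f:=f_4\circ f_3\circ f_2\circ f_1$ of an edge-line to itself fixes the two common vertices of the first and last quadrilaterals and also fixes the instance's tangency point on that edge-line; having three fixed points, the projective map $f$ is the identity.

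Finally, with every vertex monodromy trivial the transport is flat, so I would fix one edge-line $l_0$ and an arbitrary point $p\in l_0$ and transport it to a well-defined point $t_e(p)$ on every edge-line $e$, with $t_{l_0}(p)=p$. For each quadrilateral $Q$ the conic determined by $t_e(p)$ on one edge-line $e$ of $Q$ has, by the very definition of the within-$Q$ transport maps, its remaining tangency points equal to the transported points on the other three edge-lines; hence these four points are the tangency points of a (generically non-degenerate) inscribed conic by Theorem~\ref{thm: inquad}, and neighbouring conics agree on their common edge-line by construction. Thus each $p$ determines an instance of touching inscribed conics, recovering the original instance when $p$ is its tangency point on $l_0$, and letting $p$ range over $l_0$ yields the required $1$-parameter family. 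The main obstacle I expect is the bookkeeping that identifies the transport maps $f_i$ of Theorem~\ref{thm: porismloop} with this tangency-point propagation and confirms that killing the monodromy around every vertex-loop forces global consistency on the infinite grid; the geometric content itself is entirely contained in Theorem~\ref{thm: porismloop}.
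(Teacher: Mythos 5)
Your proposal is correct and takes essentially the same route as the paper, which states this corollary without separate proof precisely because it follows from Theorem~\ref{thm: porismloop} applied to the bipartite loops of four quadrilaterals around each interior vertex --- exactly the monodromy-killing argument you spell out, including the correct identification of the propagation maps with the $f_i$ (central projection through $r_i$ for opposite edges, the Ceva map through $r_i^\ast$ for adjacent ones). One cosmetic repair: since the transport maps compose non-abelianly, path-independence should be justified by the simple connectedness of the grid's dual complex (every closed path decomposes, by induction on the number of enclosed faces, into elementary vertex loops) rather than by the abelian ``cycle space''; as each face monodromy is exactly the identity, this substitution is immediate and the argument stands.
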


\begin{thm}\label{thm: porismloopnotbipartite}
In projective space $\Pn$, $n \geq 2$, the double cover of a non-bipartite loop always admits a $1$-parameter family of touching inscribed conics.
\end{thm}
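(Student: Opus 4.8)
The plan is to reuse the monodromy construction from the proof of Theorem~\ref{thm: porismloop} and to exploit the single structural difference between the bipartite and the non-bipartite case. First I would enumerate the quadrilaterals $Q_1, \ldots, Q_n$ of the non-bipartite loop, let $l_i$ be the common edge-line of $Q_i$ and $Q_{i+1}$ (indices in $\Z/n\Z$), and build exactly the same central projections $f_i : l_{i-1} \to l_i$ as there: the ordinary projection with centre $r_i$ when $l_{i-1}$ and $l_i$ are opposite edges of $Q_i$, and the Ceva-configuration projection with centre $r_i^\ast$ determined by ${\rm{cr}}(p, r_i, f_i(p), r_i^\ast) = -1$ when the two edges meet at a vertex of $Q_i$ (Figure~\ref{figure: centralprojection}). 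Set $f := f_n \circ \cdots \circ f_1 : l_0 \to l_0$. In both cases each $f_i$ carries the two endpoints of $l_{i-1}$ (the two vertices of $Q_i$ on that edge) to the two endpoints of $l_i$, so $f$ restricts to a permutation of the two common vertices $A, B$ of $Q_n$ and $Q_1$ lying on $l_0$.

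The key combinatorial point, which replaces the bipartiteness input of Theorem~\ref{thm: porismloop}, is that in the non-bipartite case $f$ interchanges $A$ and $B$ rather than fixing them. Tracking the two endpoint-strands around the loop is precisely an attempt to $2$-colour the vertices consistently across shared edges: the strands close up without swapping exactly when such a colouring exists, that is, when the loop is bipartite; a non-bipartite loop forces an odd number of crossings, so after one full traversal the two strands have exchanged roles. Since $f$ permutes the two-element set $\{A, B\}$, it either fixes both (bipartite) or swaps both; non-bipartiteness rules out the first alternative, hence $f(A) = B$ and $f(B) = A$.

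Now I would invoke the elementary fact that a projectivity of a line interchanging two distinct points is an involution: normalising $A = 0$, $B = \infty$ on $l_0 \cong \mathbb{P}^1$, the conditions $f(0) = \infty$ and $f(\infty) = 0$ force $f(z) = k/z$, whence $f^2 = \mathrm{id}$. The double cover of the loop is traversed by going around the original loop twice, so its monodromy along $l_0$ is the composition $f \circ f = f^2 = \mathrm{id}$. Consequently every point of $l_0$ is a fixed point of the double cover's monodromy; crucially, no instance of touching inscribed conics has to be assumed, which is exactly why the family exists unconditionally.

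Finally, as in Theorem~\ref{thm: porismloop}, I would pick any point $p$ of $l_0$ distinct from the vertices, declare it the common tangency point of the two quadrilaterals meeting along $l_0$, and propagate it around the double cover: by Theorem~\ref{thm: inquad} the diagonal-intersection incidence realised in each quadrilateral upgrades the propagated point on the neighbouring edge to the tangency point of a genuine non-degenerate inscribed conic, and the relation $f^2 = \mathrm{id}$ guarantees that the propagation returns to $p$ after one loop of the double cover, so the conics close up consistently. Letting $p$ vary over $l_0$ produces the $1$-parameter family. The main obstacle is the middle step: making precise, in terms of the gluing combinatorics, that non-bipartiteness is equivalent to $f$ swapping the two endpoints of $l_0$; once this swap is established, the involution computation and the propagation argument are immediate.
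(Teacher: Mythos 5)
Your proposal is correct and follows essentially the same route as the paper's proof: you construct the identical monodromy $f = f_n \circ \cdots \circ f_1$ from Theorem~\ref{thm: porismloop}, observe that non-bipartiteness forces $f$ to exchange the two common vertices of $Q_1$ and $Q_n$, and invoke the fact that a projectivity of $\mathbb{P}^1$ exchanging two distinct points is an involution (the paper cites \cite[Lemma 8.1]{Perspectives}, where you give the equivalent normalisation $f(z)=k/z$), concluding $f \circ f \equiv \mathrm{id}$. Your strand-tracking justification of the vertex swap and the explicit propagation of tangency points around the double cover via Theorem~\ref{thm: inquad} merely spell out steps the paper leaves implicit.
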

\begin{proof}
As in the proof of Theorem~\ref{thm: porismloop}, enumerate the quadrilaterals and let $l_i$ denote the common edge-line of the two neighbouring quadrilaterals $Q_{i}$ and $Q_{i+1}$ for all $i \in \Z/n\Z$. Define $f: l_0 \to l_0$ to be the projective transformation $f_{n} \circ f_{n-1} \circ \ldots \circ f_2 \circ f_1$ that was defined in the proof of Theorem~\ref{thm: porismloop}. Let $v_1$ and $v_2$ be the two common vertices of $Q_1$ and $Q_n$. Because the loop of planar quadrilaterals is not bipartite, it follows that $f(v_1)=v_2$ and $f(v_2)=v_1$. However, any projective transformation $\mathbb{P}^1 \to \mathbb{P}^1$ is an involution if it exhanges two distinct points \cite[Lemma 8.1]{Perspectives}. Therefore, $f \circ f \equiv id$.
\end{proof}

\subsection{Koenigs nets}\label{subsection: koenigsnets}

Two planar quadrilaterals $\Box(A,B,C,D)$ and $\Box(A^*,B^*,C^*, D^*)$ are called \emph{dual quadrilaterals} if and only if their corresponding edge-lines are parallel and their non-corresponding diagonals are parallel. For any planar quadrilateral, a dual quadrilateral exists and it is uniquely determined up to translation and rescaling.

A net $f: \Z^2 \to \mathbb{A}^n$ of planar quadrilaterals in affine space $\mathbb{A}^n$, $n \geq 2$,  is called a \emph{$2$-dimensional Koenigs} if and only if there exists a Christoffel dual net $f^\ast: \Z^2 \to \mathbb{A}^n$ such that the corresponding quadrilaterals are dual \cite{DDG}. Although $2$-dimensional Koenigs nets are defined in terms of affine geometry, it is known that the class of $2$-dimensional Koenigs nets is invariant under projective transformations.

\begin{defn}\label{defn: koenigsexact}
Consider a Q-net $f: \Z^2 \to \mathbb{A}^n$, $n \geq 2$. Denote by $M_{i,j}$ the intersection point of the diagonals of the quadrilateral $\Box(f_{i,j}, f_{i+1,j}, f_{i+1,j+1}, f_{i, j+1})$. Then, the net $f: \Z^2 \to \mathbb{A}^n$ is a $2$-dimensional Koenigs net if and only if the following condition is satisfied for all $(i,j) \in \Z^2$.
\begin{align}\label{eqn: koenigsdefn}
\frac{l(M_{i,j}, f_{i+1,j})}{l(M_{i,j},f_{i,j+1})}\cdot\frac{l(M_{i-1,j}, f_{i,j+1})}{l(M_{i-1,j},f_{i-1,j})}\cdot\frac{l(M_{i-1,j-1},f_{i-1,j})}{l(M_{i-1,j-1},f_{i,j-1})}\cdot\frac{l(M_{i,j-1},f_{i,j-1})}{l(M_{i,j-1},f_{i+1,j})} = 1
\end{align} 
\end{defn}

This algebraic characterisation and further projective geometric properties of Koenigs nets can be found in \cite{DDG}.

\begin{figure}[htbp]
\begin{center}
\includegraphics[width =0.5\textwidth]{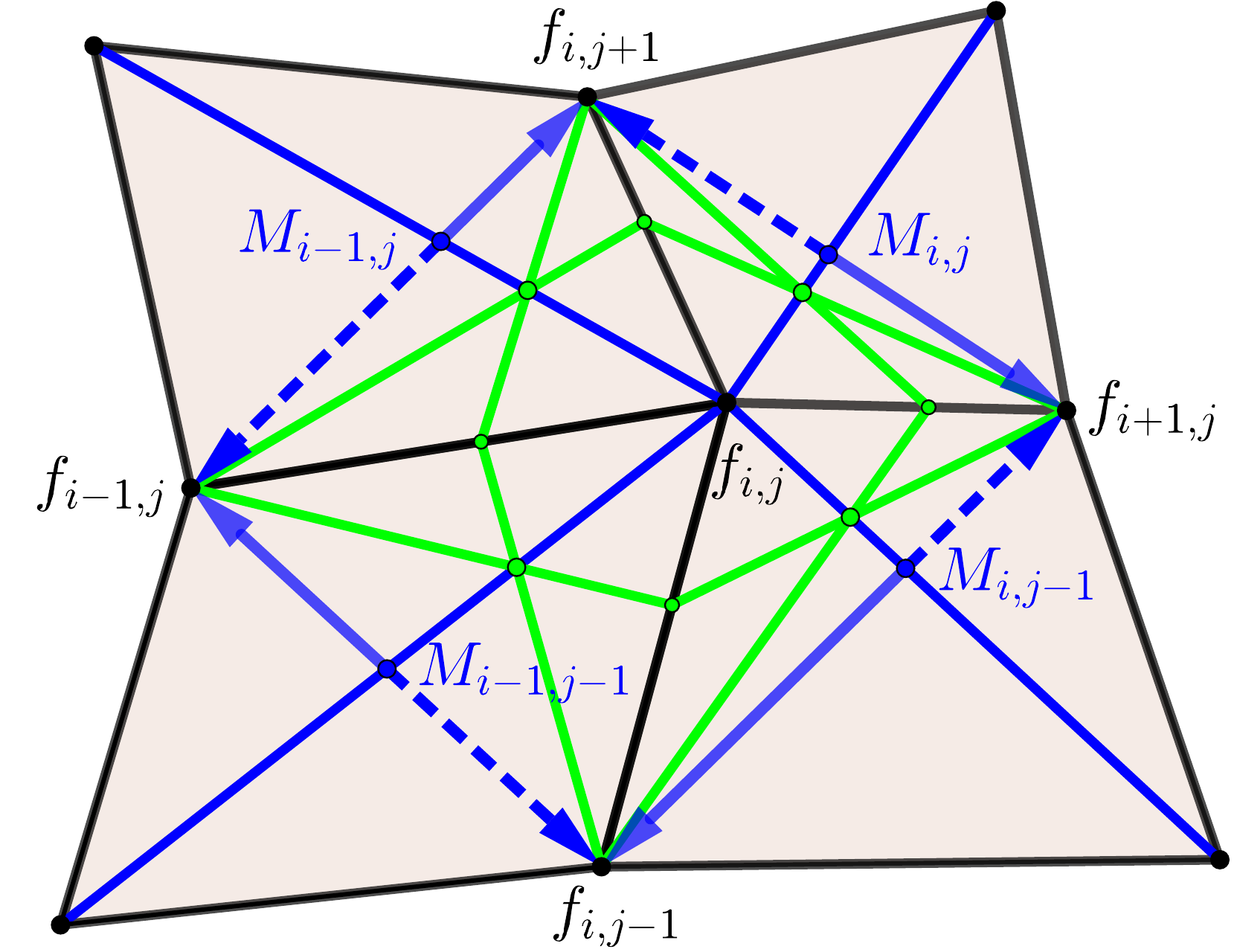}
\end{center}
\caption{The identity (\ref{eqn: koenigsdefn}) is valid if and only if the net of planar quadrilaterals admits a $1$-parameter family of touching inscribed conics.}
\label{figure: Koenigsdegree4}
\end{figure}

\begin{thm}\label{thm: Koenigs}
A Q-net $f: \Z^2 \to \mathbb{P}^n$, $n \geq 2$,  is a $2$-dimensional Koenigs net if and only if it admits (a $1$-parameter family of) touching inscribed conics.
\end{thm}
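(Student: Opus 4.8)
The plan is to reduce both conditions to relations among oriented lengths at a single vertex star and to observe that they match term by term. Since the class of Koenigs nets and the notion of inscribed conic are both projectively invariant, I would fix an affine chart and, for a fixed interior vertex $f_{i,j}$, work in the planes of the four quadrilaterals surrounding it. These four quadrilaterals, namely $\Box(f_{i,j},f_{i+1,j},f_{i+1,j+1},f_{i,j+1})$ and its three quarter-turn rotations, have diagonal intersection points $M_{i,j}, M_{i-1,j}, M_{i-1,j-1}, M_{i,j-1}$, and they are glued cyclically along the four edge-lines through $f_{i,j}$ (right, up, left, down). By Theorem~\ref{thm: inquad}, a choice of touching inscribed conics around $f_{i,j}$ is the same as a choice of one tangency point on each of these edges such that, in every one of the four quadrilaterals, its diagonal intersection point is a common point of the four Ceva configurations of its tangency points.

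Next I would record, in each quadrilateral, the single Ceva relation that couples its two edges through $f_{i,j}$. For $\Box(f_{i,j},f_{i+1,j},f_{i+1,j+1},f_{i,j+1})$ this is the Ceva configuration on the triangle $\triangle(f_{i,j+1},f_{i,j},f_{i+1,j})$ whose third side is the diagonal carrying $M_{i,j}$. Writing $t_R = l(f_{i,j},P_R)/l(P_R,f_{i+1,j})$ and $t_U = l(f_{i,j},P_U)/l(P_U,f_{i,j+1})$ for the tangency coordinates of the tangency points $P_R, P_U$ on the right and up edges, Ceva's theorem (Theorem~\ref{thm: cevamenelaus}) gives
\[ \frac{t_U}{t_R} = -\frac{l(M_{i,j},f_{i+1,j})}{l(M_{i,j},f_{i,j+1})}, \]
and the factor on the right is exactly the first factor of the Koenigs identity (\ref{eqn: koenigsdefn}). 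Rotating the vertex by quarter turns produces three analogous relations for $t_L/t_U$, $t_D/t_L$ and $t_R/t_D$, each equal to $-1$ times the corresponding factor of (\ref{eqn: koenigsdefn}).

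Multiplying the four relations around the vertex, the tangency coordinates telescope to $1$ on the left, the four signs contribute $(-1)^4 = 1$, and the four oriented-length factors assemble into precisely the product on the left of (\ref{eqn: koenigsdefn}). Hence a consistent assignment of tangency coordinates $t_R, t_U, t_L, t_D$ around $f_{i,j}$ exists if and only if the Koenigs identity (\ref{eqn: koenigsdefn}) holds at $f_{i,j}$. This yields the forward implication at once: if the net carries touching inscribed conics, then by Theorem~\ref{thm: inquad} the four Ceva relations hold with matching tangency points on the shared edges, so (\ref{eqn: koenigsdefn}) holds at every vertex and the net is Koenigs.

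For the converse I would construct the conics by propagation. Fix one edge and one tangency point on it; by Corollary~\ref{cor: 1para} this determines the inscribed conic of each adjacent quadrilateral and hence the tangency points on its remaining edges, propagating the choice to neighbouring edges. The only obstruction to a globally consistent assignment on the simply connected grid $\Z^2$ is the monodromy around the elementary loops: loops bounding a single quadrilateral close automatically (one conic), and the telescoping computation above shows that the monodromy around each vertex star is, in the tangency coordinate, multiplication by the left-hand side of (\ref{eqn: koenigsdefn}), hence the identity precisely when the net is Koenigs. Thus the Koenigs condition produces one instance of touching inscribed conics, which Corollary~\ref{cor: porismSG} then upgrades to a $1$-parameter family. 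I expect the main obstacle to be the bookkeeping of the second paragraph: pinning down the orientation conventions so that each Ceva factor reproduces the corresponding factor of (\ref{eqn: koenigsdefn}) with the correct sign, so that the four signs cancel and the product matches the Koenigs identity exactly.
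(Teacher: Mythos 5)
Your proposal is correct and takes essentially the same route as the paper: the paper's (very compressed) proof likewise reduces the touching condition, via Theorem~\ref{thm: inquad}, to Ceva configurations glued around each vertex star, with the Koenigs identity (\ref{eqn: koenigsdefn}) arising, by Ceva's theorem, as exactly the closure condition for that gluing. Your explicit tangency coordinates, sign bookkeeping, telescoping product and monodromy/propagation argument (together with the appeal to Corollary~\ref{cor: porismSG}) merely spell out in detail what the paper compresses into its reference to Figure~\ref{figure: Koenigsdegree4}.
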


\begin{proof}
Ceva's theorem implies that equation (\ref{eqn: koenigsdefn}) from Definition~\ref{defn: koenigsexact} is a necessary and sufficient condition for the existence of a $1$-parameter family of Ceva configurations that are glued together as shown in Figure~\ref{figure: Koenigsdegree4}. Equivalently, by Theorem~\ref{thm: inquad}, there is a $1$-parameter family of touching inscribed conics.
\end{proof}

\begin{cor}\label{cor: koenigsquadric}
If all the edge-lines of a Q-net $f: \Z^2 \to \mathbb{P}^n$, $n \geq 2$, are tangent to a non-degenerate quadric, then $f$ is a $2$-dimensional Koenigs net.
\end{cor}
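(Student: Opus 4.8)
The plan is to invoke Theorem~\ref{thm: Koenigs}, which reduces the claim to exhibiting a single instance of touching inscribed conics on the net (the porism of Corollary~\ref{cor: porismSG} then supplies the full $1$-parameter family). Let $\mathcal{Q}$ denote the non-degenerate quadric to which all edge-lines are tangent. The key observation is that every elementary quadrilateral of a Q-net is planar, so to each face $\Box(f_{i,j}, f_{i+1,j}, f_{i+1,j+1}, f_{i,j+1})$ I would associate the conic obtained by slicing $\mathcal{Q}$ with the plane $\pi_{i,j}$ spanned by that face, namely $C_{i,j} := \mathcal{Q} \cap \pi_{i,j}$.

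First I would verify that $C_{i,j}$ is inscribed in the corresponding face. Each edge-line $l$ of the face lies in the plane $\pi_{i,j}$ and is tangent to $\mathcal{Q}$, so it meets $\mathcal{Q}$ in a single point $p_l$ with multiplicity two. Since $l \subset \pi_{i,j}$, one has $l \cap C_{i,j} = l \cap \mathcal{Q} \cap \pi_{i,j} = \{p_l\}$, and the intersection multiplicity two is inherited; hence $l$ is tangent to $C_{i,j}$ at $p_l$. Thus all four edge-lines are tangent to $C_{i,j}$, so $C_{i,j}$ is an inscribed conic in the sense of Definition~\ref{defn: nondeginconic}.

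The crucial point is that the touching condition then holds automatically. For two neighbouring faces sharing an edge-line $l$, both of their inscribed conics are tangent to $l$ at the same point $p_l = l \cap \mathcal{Q}$, because this tangency point is intrinsic to the pair $(l, \mathcal{Q})$ and does not depend on which face's plane is used to cut the quadric. Therefore the inscribed conics $\{C_{i,j}\}$ form a consistent instance of touching inscribed conics, and Theorem~\ref{thm: Koenigs} then yields that $f$ is a $2$-dimensional Koenigs net.

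I expect the main obstacle to be the non-degeneracy bookkeeping needed to apply the results of Section~\ref{section: lemma}. I would argue that each slice $C_{i,j} = \mathcal{Q} \cap \pi_{i,j}$ is a non-degenerate conic: a degenerate conic is a double point, a double line, or a pair of lines, and in each of these cases its tangent lines form a single pencil, which is incompatible with the four generic edge-lines of the face being tangent to it. One also needs to check the genericity hypothesis of Definition~\ref{defn: nondeginconic} that each tangency point $p_l$ is distinct from the vertices of the face, i.e.\ that no vertex of the net lies on $\mathcal{Q}$; this is the one place where a mild general-position assumption enters. Finally, a small point worth making explicit in $\Pn$ with $n > 2$ is that ``tangent to $\mathcal{Q}$'' must be read as touching $\mathcal{Q}$ at exactly one point (the edge-line is not contained in $\mathcal{Q}$), so that the point $p_l$ is well defined and the argument goes through verbatim.
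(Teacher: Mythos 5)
Your proposal is correct and is essentially the paper's own (implicit) argument: the paper states this as an immediate consequence of Theorem~\ref{thm: Koenigs}, with the instance of touching inscribed conics obtained exactly as you construct it, by slicing the quadric with each face plane and observing that the contact point on every edge-line is determined by the line and the quadric alone, hence shared by neighbouring faces. One small slip in your non-degeneracy bookkeeping: the tangent lines of a double line do \emph{not} form a single pencil (every line of the plane meets a double line with multiplicity two, and in $\mathbb{P}^n$ with $n \geq 4$ a plane section of a non-degenerate quadric can indeed be a double line, with all lines of that plane tangent to the quadric), but this case is impossible for $n \leq 3$ by a rank count on the restricted bilinear form and is otherwise excluded by the same general-position assumption you already invoke, in keeping with the paper's standing genericity conventions.
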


Examples of Q-nets with their edge-lines tangent to a sphere are given by Koebe polyhedra, which are used in \cite{BHS06} to construct discrete minimal surfaces. The corresponding touching conics are circles. Koebe polyhedra have a $1$-parameter family of touching inscribed conics. 
\section{Line grids with quadrilaterals with touching inscribed conics}\label{section: grids}

\subsection{Polygonal chains inscribed in conics}\label{subsection: projectivebilliards}

Let $p_0, p_1, \ldots, p_m$ be the vertices of a polygonal chain that is inscribed in a non-degenerate conic $\mathcal{C}$ and let $q_0, q_1, \ldots, q_n$ be the vertices of another polygonal chain that is also inscribed in the non-degenerate conic $\mathcal{C}$. Let $k_0, k_1, \ldots, k_m$ and $l_0, l_1, \ldots, l_n$ be the tangent lines of $\mathcal{C}$ at the points $p_0, p_1, \ldots, p_m$ and $q_0, q_1, \ldots, q_n$. For any $i,j \in \N$ such that $1 \leq i  \leq m$ and $1 \leq j \leq n$, the notation $\Box (k_{i-1},l_{j-1}, k_i, l_j)$ denotes the quadrilateral with the vertices $k_{i-1}\cap l_{j-1}, k_{i-1}\cap l_{j}, k_{i}\cap l_{j}, k_{i}\cap l_{j-1}$. Define the two lines $k_{i-1,i} := (p_{i-1} , p_i)$ and $l_{j,j-1} := (q_{j-i} , q_j)$. By Lemma~\ref{lem: involution}, the points $k_{i-1,i} \cap l_{j-1}$, $k_{i-1,i} \cap l_j$, $k_{i-1} \cap l_{j-1,j}$, $k_{i} \cap l_{j-1,j}$ are the tangency points of a conic that is inscribed in the quadrilateral $\Box (k_{i-1},l_{j-1}, k_i, l_j)$. Therefore, the $m\times n$ grid of quadrilaterals  $\{\Box (k_{i-1},l_{j-1}, k_i, l_j)\}_{1 \leq i \leq m, 1 \leq j \leq n}$ admits an instance of touching inscribed conics such that the tangency points satisfy some non-trivial collinearites. An example is shown in Figure~\ref{figure: TCnethyp2polygonalchains} where the non-trivial collinearities are represented by the dotted lines.
 
\begin{figure}[htbp]
\begin{center}
\includegraphics[width =0.9\textwidth]{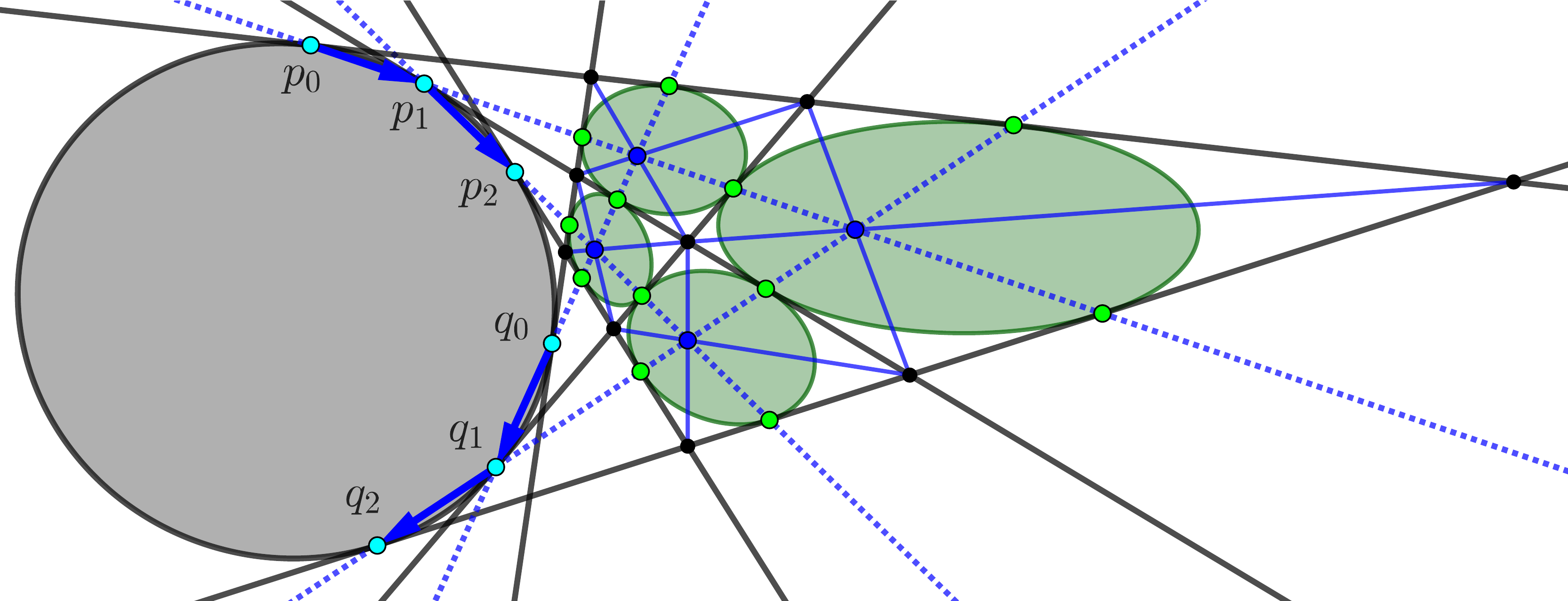}
\end{center}
\caption{Two polygonal chains $p_0, p_1, p_2$ and $q_0, q_1, q_2$ are inscribed in a conic. The solid lines are the tangent lines $k_0, k_1, k_2$ and $l_0, l_1, l_2$. The dotted lines are the lines $(p_0, p_1)$, $(p_1, p_2)$, $(q_0, q_1)$, $(q_1, q_2)$. By construction, the tangency points of the touching inscribed conics lie on the dotted lines.}
\label{figure: TCnethyp2polygonalchains}
\end{figure} 

In the above construction of grids of quadrilaterals with touching inscribed conics, the two polygonal chains determine the ``horizontal" and ``vertical" lines of the grids. However, they can be merged. (See Figure~\ref{figure: TCperiodictrajectorysamesize}).

\begin{figure}[htbp]
\begin{center}
\includegraphics[width =0.75\textwidth]{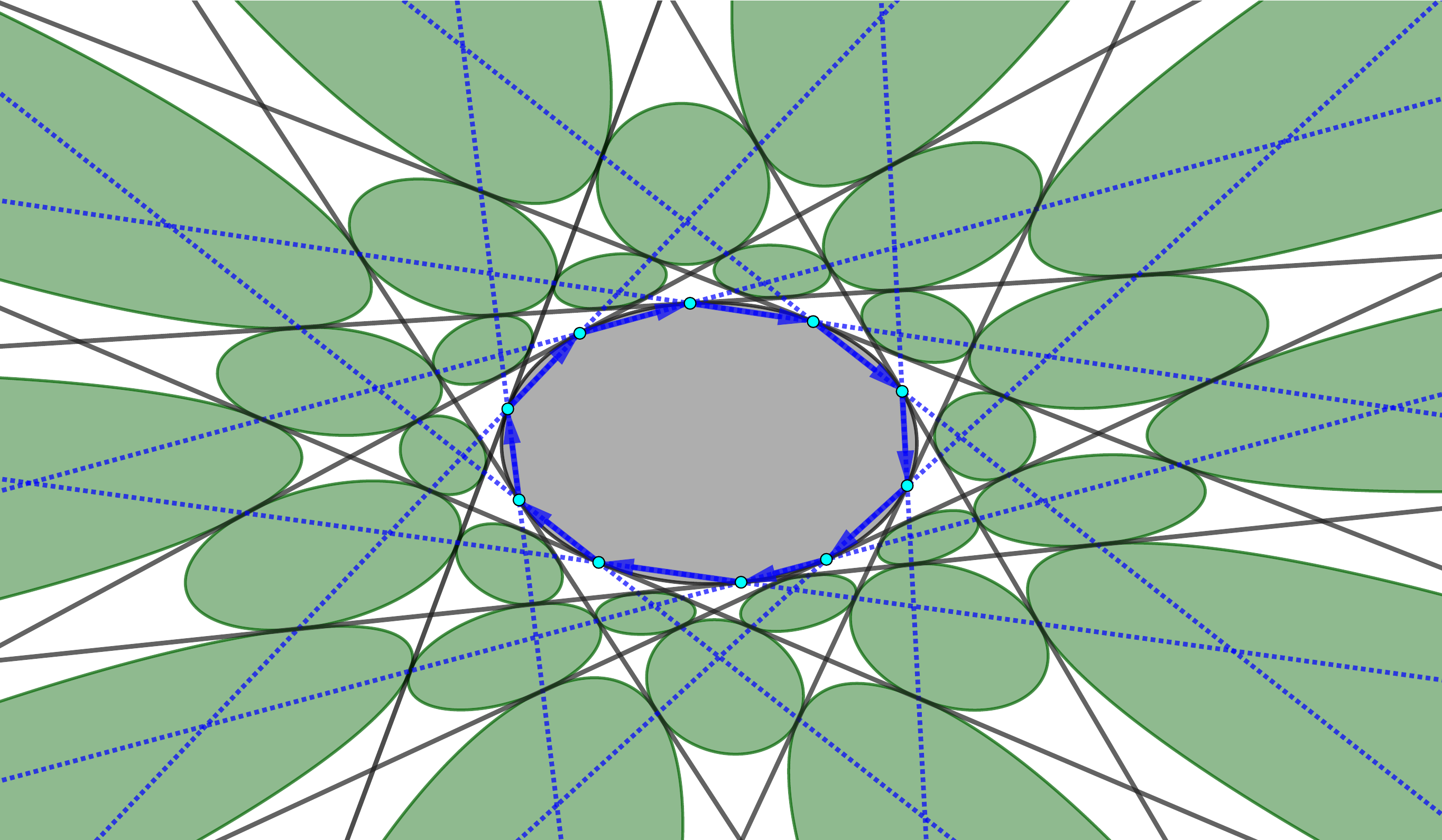}
\end{center}
\caption{An inscribed polygon and a grid of quadrilaterals with touching inscribed conics. Among the $1$-parameter family of touching inscribed conics, there is one instance of touching inscribed conics such that the tangency points of the touching inscribed conics are contained in the dotted edge-lines of the inscribed polygon.}
\label{figure: TCperiodictrajectorysamesize}
\end{figure}

\subsection{Generic lines tangent to a conic}\label{subsection: sixlinestangenttoaconic}

Theorem~\ref{thm: tangentlines} is a consequence of two classical theorems which are referenced in the proof.

\begin{thm}\label{thm: tangentlines}
Let $k_0, k_1$ and $l_0, l_1, \ldots, l_n$, $n \geq 2$, be lines in the projective plane such that each quadrilateral $Q_i := \Box (k_0, l_{i-1}, k_1, l_i)$ has four generic edge-lines. Let $r_i$ be the intersection point of the diagonals of $Q_i$. Then, the following are equivalent.
\begin{enumerate}[label=(\roman*)]
\item The lines $k_0, k_1$ and $l_0, l_1, \ldots,  l_n$ are tangent to a non-degenerate conic.
\item The points  $\{r_{i}\}_{1 \leq i \leq n}$ lie in a line that does not contain the point $k_0 \cap k_1$.
\end{enumerate}
Suppose that the lines $k_0, k_1$ and $l_0, l_1, \ldots,  l_n$ are tangent to a non-degenerate conic $\mathcal{C}$. Let $k_{0,1}$ be the line containing the collinear points $\{r_{i}\}_{1 \leq i \leq n}$. Then, $k_0 \cap k_{0,1}$ and $k_1 \cap k_{0,1}$ are the tangency points of the tangent lines $k_0$ and $k_1$.
\end{thm}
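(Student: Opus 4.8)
The plan is to prove the equivalence by treating the two directions separately, reading off the final sentence directly from the proof of (i)$\Rightarrow$(ii), and to reduce everything to two classical inputs: Proposition~\ref{prop: opptangencypts} (the degenerate Brianchon theorem) and the classical fact, dual to ``five points determine a conic'', that five lines in general position are tangent to a unique non-degenerate conic. Throughout I write $O := k_0 \cap k_1$. For (i)$\Rightarrow$(ii) and the ``moreover'' claim, I would suppose the lines touch a non-degenerate conic $\mathcal{C}$, let $P_0, P_1$ be the contact points of $k_0, k_1$, and examine a single quadrilateral $Q_i$. Its four edge-lines $k_0, l_i, k_1, l_{i-1}$ are all tangent to $\mathcal{C}$, with opposite edge-pairs $\{k_0,k_1\}$ and $\{l_{i-1},l_i\}$; Proposition~\ref{prop: opptangencypts} then forces the line through the opposite contact points $P_0, P_1$ (and the line through the contact points on $l_{i-1}, l_i$) to pass through the diagonal point $r_i$. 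Hence every $r_i$ lies on the single line $k_{0,1} := (P_0, P_1)$, so the $r_i$ are collinear; this line avoids $O$, since $O, P_0, P_1$ collinear together with $O, P_0 \in k_0$ would put $P_1$ on $k_0$, contradicting $P_0 \neq P_1$. As $k_0 \cap k_{0,1} = P_0$ and $k_1 \cap k_{0,1} = P_1$ are the contact points, the last sentence of the theorem falls out for free.

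For (ii)$\Rightarrow$(i), I would first construct the conic. Let $\mathcal{C}$ be the unique conic tangent to the five lines $k_0, k_1, l_0, l_1, l_2$, non-degenerate by the general-position hypotheses. By construction $l_0, l_1, l_2$ touch $\mathcal{C}$, and applying the already-proved direction to the quadrilaterals $Q_1, Q_2$ places $r_1, r_2$ on the chord of contact of $\mathcal{C}$ on $k_0, k_1$. Since $r_1, r_2$ also lie on $k_{0,1}$ and are distinct, that chord of contact must coincide with $k_{0,1}$; consequently $\mathcal{C}$ touches $k_0$ and $k_1$ at $P_0 := k_0 \cap k_{0,1}$ and $P_1 := k_1 \cap k_{0,1}$. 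It then suffices to prove by induction on $j \geq 3$ that $l_j$ is tangent to $\mathcal{C}$ given that $l_{j-1}$ is.

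The inductive step I would base on the fact that, with $k_0, k_1, l_{j-1}$ held fixed, the map $\Phi$ sending a line $l$ to the diagonal point $r$ of $\Box(k_0, l_{j-1}, k_1, l)$ is injective: setting $A := k_0 \cap l_{j-1}$ and $B := k_1 \cap l_{j-1}$, the vertices $k_1 \cap l = (A,r) \cap k_1$ and $k_0 \cap l = (B,r) \cap k_0$ are recovered from $r$, and with them $l$ itself. To identify $l_j$ I would build the expected tangent line: let $T_{j-1}$ be the contact point of $l_{j-1}$, let $T_j$ be the second intersection of $(T_{j-1}, r_j)$ with $\mathcal{C}$, and let $\hat l_j$ be the tangent to $\mathcal{C}$ at $T_j$. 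Proposition~\ref{prop: opptangencypts} applied to $\mathcal{C}$ inscribed in $\Box(k_0, l_{j-1}, k_1, \hat l_j)$ forces its diagonal point onto both $(P_0, P_1) = k_{0,1}$ and $(T_{j-1}, T_j)$, hence onto their unique common point, which is exactly $r_j$ (it lies on $k_{0,1}$ by hypothesis and on $(T_{j-1}, T_j)$ by construction). Thus $\Phi(\hat l_j) = r_j = \Phi(l_j)$, and injectivity gives $l_j = \hat l_j$, tangent to $\mathcal{C}$, closing the induction.

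The main obstacle is precisely this converse direction: turning a purely incidence hypothesis (collinearity of the $r_i$) into the metric-free conic tangency of every $l_j$. The mechanism that breaks the impasse is the explicit inverse of the diagonal-point map $\Phi$ combined with Proposition~\ref{prop: opptangencypts} run in reverse to single out one candidate tangent line $\hat l_j$. The remaining care is bookkeeping of the genericity conditions that the statement already assumes: that the five-line conic $\mathcal{C}$ is non-degenerate, that $r_1 \neq r_2$ so the chord of contact is determined, and that $(P_0, P_1)$ and $(T_{j-1}, T_j)$ are distinct lines so their intersection is the single point $r_j$.
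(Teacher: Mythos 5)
Your proposal is correct in the generic setting, but it takes a genuinely different route from the paper's. The paper settles the equivalence $(i)\Leftrightarrow(ii)$ in one stroke by citing two classical results: the dual of Steiner's theorem on the projective generation of conics (the lines are tangent to a non-degenerate conic if and only if there is a projective transformation $f\colon k_0 \to k_1$ with $f(k_0\cap l_i) = k_1\cap l_i$ and $f(k_0\cap k_1)\neq k_0\cap k_1$) together with the cross-axis theorem, which identifies the line carrying the $r_i$ with the cross-axis of $f$; the clause that the line avoids $k_0\cap k_1$ is then exactly the statement that $f$ is not a perspectivity. The final sentence is obtained from Proposition~\ref{prop: opptangencypts}, precisely as in your argument, so your $(i)\Rightarrow(ii)$ coincides with the paper's last paragraph. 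Your converse, however, avoids Steiner and the cross-axis entirely: you build the conic from five tangent lines and propagate tangency by induction, with the injectivity of the diagonal-point map $\Phi$ and the reverse-engineered tangent $\hat l_j$ as the engine. This is more self-contained (it uses only Proposition~\ref{prop: opptangencypts} and five-tangent-line uniqueness, and it parallels the paper's own steps $(v)\Rightarrow(i)$ in Theorem~\ref{thm: mainthm} and $(vi)\Rightarrow(i)$ in Corollary~\ref{cor: tangentgridlines}), at the cost of heavier genericity bookkeeping; the paper's route is shorter, works uniformly in $n$ without induction, and makes the role of the avoid-$k_0\cap k_1$ clause transparent. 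Your flagged bookkeeping items do check out: $r_1\neq r_2$ follows from quadrilateral genericity (if $r_1=r_2$, one forces $k_0,k_1,l_1$ concurrent), and $(P_0,P_1)\neq(T_{j-1},T_j)$ holds because a non-degenerate conic contains no three collinear points.

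There is, however, one substantive weak point: your converse never actually uses the hypothesis that the line avoids $k_0\cap k_1$, and any correct proof must. If all the $l_i$ pass through a common point $P\notin k_0\cup k_1$, then every quadrilateral $Q_i$ still has four generic edge-lines and the points $r_i$ are collinear (they lie on the cross-axis of the perspectivity with centre $P$, a line through $k_0\cap k_1$), yet no non-degenerate conic is tangent to three concurrent lines. The only place this scenario is blocked in your write-up is the opening claim that $k_0,k_1,l_0,l_1,l_2$ are in general position ``by the general-position hypotheses'' --- but the theorem assumes genericity only quadrilateral-by-quadrilateral, which does not forbid triples like $\{l_0,l_1,l_2\}$ or $\{k_0,l_0,l_2\}$ from being concurrent. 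So the non-degeneracy of your five-line conic is exactly where clause $(ii)$ must enter, and it needs an argument: e.g., if $l_0,l_1,l_2$ were concurrent, the correspondence $k_0\cap l_i \mapsto k_1\cap l_i$ would be a perspectivity, forcing the line $(r_1,r_2)=k_{0,1}$ through $k_0\cap k_1$, contradicting $(ii)$. Note that even this patch does not dispose of the case $k_0,l_0,l_2$ concurrent (there $r_1$ and $r_2$ both lie on the diagonal through $k_0\cap l_0$ and $k_1\cap l_1$, which generically avoids $k_0\cap k_1$), so strictly speaking the theorem needs the global genericity under which the paper actually applies it; the paper's own appeal to the cross-axis theorem glosses comparable distinctness conditions, so this last defect is shared, but the unproved five-line non-degeneracy is specific to your route and is the one step you must supply.
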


\begin{proof}
By the dual of Steiner's theorem on the projective generation of non-degenerate conics, the lines $k_0, k_1, l_0, l_1, \ldots,  l_n$ are tangent to a non-degenerate conic if and only if there is a projective transformation $f: k_0 \to k_1$ such that $f(k_0\cap l_i) = k_1\cap l_i$ for all $i \in \{1, \ldots, n\}$ and such that $f(k_0 \cap k_1) \neq k_0 \cap k_1$ \cite[Theorems 8.1.4 and 8.1.8]{Casas}. Equivalently, by the cross-axis theorem \cite[Theorem 5.3.5]{Casas} and \cite[Proposition 5.3.7]{Casas}, the points $\{r_i\}_{1 \leq i \leq n}$ are contained in a line which is called the cross-axis of $f: k_0 \to k_1$. The cross-axis is not concurrent with the lines $k_0$ and $k_1$ because otherwise the projective transformation $f: k_0 \to k_1$ would be a central projection so that $f(k_0 \cap k_1) = k_0 \cap k_1$. Therefore $(i)$ and $(ii)$ are equivalent. 

Suppose that the generic lines $k_0, k_1$ and $l_0, l_1, \ldots,  l_n$ are tangent to a non-degenerate conic $\mathcal{C}$. Let $p_0$ and $p_1$ be the tangency points of the the tangent lines $k_0$ and $k_1$. Because $\mathcal{C}$ is inscribed in each of the quadrilaterals $\{Q_i\}_{i=1, \ldots, n}$, Proposition~\ref{prop: opptangencypts} ensures that the points $\{r_i\}_{i=1, \ldots, n}$ are contained in the line $(p_0, p_1)$. Therefore, $p_0 = k_0 \cap k_{0,1}$ and $p_{1} = k_1 \cap k_{0,1}$.
\end{proof}

Let $k_0, k_1, \ldots, k_m$ and $l_0, l_1, \ldots, l_n$ be generic lines in the projective plane. Consider the $m\times n$ grid of quadrilaterals $Q_{i,j} := \Box (k_{i-1}, l_{j-1}, k_i, l_j)$. We use $K_{i-1,i}$ and $L_{j-1,j}$ to denote the strips of quadrilaterals $\{\Box(k_{i-1}, l_{j-1}, k_i, l_j)\}_{j=1, \ldots, n}$ and $\{\Box(k_{i-1},l_{j-1}, k_{i}, l_j)\}_{i=1, \ldots, m}$, respectively.

\begin{thm}\label{thm: mainthm}
For six generic lines $k_0, k_1, k_2, l_0, l_1, l_2$ in the projective plane, consider the $2\times 2$ grid of quadrilaterals $Q_{i,j} := \Box(k_{i-1},l_{j-1},k_i,l_j)$. We use $r_{i,j}$ to denote the intersection point of the diagonals of the quadrilateral $Q_{i,j}$. Then, the following are equivalent.
\begin{enumerate}[label=(\roman*)]
\item The six lines $k_0, k_1, k_2, l_0, l_1, l_1$ are tangent to a non-degenerate conic.
\item The $2 \times 2$ grid of quadrilaterals admits an instance of touching inscribed conics $C_{i,j}$ such that the following sets are sets of collinear points. (See Figure~\ref{figure: maintheoremproof}.)
 \begin{align*}
\{K_{0,1}l_0, r_{1,1}, K_{0,1}l_{1}, r_{1,2}, K_{0,1}l_{2}\} \qquad \{k_{0}L_{1,2}, r_{1,2}, k_{1}L_{1,2}, r_{2,2}, k_{2}L_{1,2}\}\\
\{K_{1,2}l_{0}, r_{2,1}, K_{1,2}l_{1}, r_{2,2}, K_{1,2}l_{2}\} \qquad \{k_{0}L_{0,1}, r_{1,1}, k_{1}L_{0,1},r_{2,1}, k_{2}L_{0,1}\}
\end{align*}
The points $K_{i-1,i}l_{j-1}$, $K_{i-1,i}l_j$, $k_{i-1}L_{j-1,j}$, $k_iL_{j-1,j}$ are defined to be the tangency points of the conic $\mathcal{C}_{i,j}$ that is inscribed in the quadrilateral $Q_{i,j}$. The tangency points are labelled by their tangent lines and by the strips of quadrilaterals. 
\item The $2\times 2$ grid of quadrilaterals admits an instance of touching inscribed conics.
\item The $2\times 2$ grid of quadrilaterals admits a $1$-parameter family of touching inscribed conics.
\item The three lines $(r_{1,1}, r_{2,1})$,  $(r_{1,2}, r_{2,2})$ and $l_1$ are concurrent.
\item The three lines $(r_{1,1} , r_{1,2})$, $(r_{2,1}, r_{2,2})$ and $k_1$ are concurrent.
\end{enumerate}
\end{thm}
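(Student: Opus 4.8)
The plan is to organize the six conditions into two groups linked by Theorem~\ref{thm: tangentlines}. I would treat $(i),(v),(vi)$ as one cluster, tied together by the observation that five generic lines have a unique inscribed conic, and I would attach $(ii),(iii),(iv)$ to $(i)$ by an explicit construction together with the porism, closing the whole cycle with a single delicate implication. The starting point for $(i) \Leftrightarrow (vi)$ is to apply Theorem~\ref{thm: tangentlines} with $n=2$ to the two ``columns''. Let $\mathcal{C}_{\ell}$ be the unique conic tangent to $k_0,k_1,l_0,l_1,l_2$ and $\mathcal{C}_{r}$ the unique conic tangent to $k_1,k_2,l_0,l_1,l_2$; Theorem~\ref{thm: tangentlines} then identifies their tangency points on $k_1$ as $k_1\cap(r_{1,1},r_{1,2})$ and $k_1\cap(r_{2,1},r_{2,2})$ respectively. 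Both conics are inscribed in the quadrilateral bounded by $k_1,l_0,l_1,l_2$, so by Corollary~\ref{cor: 1para} they coincide precisely when these two tangency points agree, i.e. when $(r_{1,1},r_{1,2})$, $(r_{2,1},r_{2,2})$ and $k_1$ are concurrent. Since $\mathcal{C}_{\ell}=\mathcal{C}_{r}$ is exactly the assertion that all six lines touch one conic, this yields $(i)\Leftrightarrow(vi)$; the forward direction can be read off directly from Proposition~\ref{prop: opptangencypts}, which puts $r_{1,1},r_{1,2}$ on the chord joining the tangency points of the common conic on $k_0,k_1$. Running the identical argument on the two ``rows'' gives $(i)\Leftrightarrow(v)$.

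Next I would establish $(i)\Rightarrow(ii)\Rightarrow(iii)\Leftrightarrow(iv)$. For $(i)\Rightarrow(ii)$, take the common conic $\mathcal{C}$ and its tangency points $p_0,p_1,p_2$ on the $k$-lines and $q_0,q_1,q_2$ on the $l$-lines; these are two polygonal chains inscribed in $\mathcal{C}$, so the construction of Section~\ref{subsection: projectivebilliards} based on Lemma~\ref{lem: involution} produces touching inscribed conics $C_{i,j}$ whose tangency points on $l_{j-1},l_j$ lie on the chord $(p_{i-1},p_i)$ and whose tangency points on $k_{i-1},k_i$ lie on the chord $(q_{j-1},q_j)$. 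Together with Proposition~\ref{prop: opptangencypts} (which places each $r_{i,j}$ on these same chords) this is exactly the four collinearities recorded in $(ii)$, the four collinearity lines being the four chords. The step $(ii)\Rightarrow(iii)$ is trivial. For $(iii)\Leftrightarrow(iv)$ I would note that the four quadrilaterals around the central vertex $k_1\cap l_1$ form a bipartite loop, since the underlying $3\times 3$ grid graph is bipartite; Theorem~\ref{thm: porismloop} then upgrades a single instance of touching inscribed conics to a one-parameter family, and the converse is immediate.

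The hard part is closing the loop with $(iv)\Rightarrow(i)$, because a generic instance of touching inscribed conics does \emph{not} come from a single conic, so one cannot simply invoke Theorem~\ref{thm: tangentlines}. The idea is to use the freedom in the one-parameter family to select the right member. Using $(iv)$ and Corollary~\ref{cor: 1para}, I would choose the member for which $C_{1,1}$ has $k_1$-tangency point $k_1\cap(r_{1,1},r_{1,2})$; by Theorem~\ref{thm: tangentlines} and the uniqueness in Corollary~\ref{cor: 1para} this forces $C_{1,1}=\mathcal{C}_{\ell}$. Since $C_{1,2}$ touches $C_{1,1}$ on $l_1$, the same uniqueness gives $C_{1,2}=\mathcal{C}_{\ell}$, so $C_{1,2}$ also touches $k_1$ at $k_1\cap(r_{1,1},r_{1,2})$. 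Consequently $C_{2,1}$ and $C_{2,2}$ touch $k_1$ at this common point, and as they touch each other on $l_1$ they share five tangency conditions, forcing $C_{2,1}=C_{2,2}=\mathcal{C}_{r}$. Finally $\mathcal{C}_{\ell}$ and $\mathcal{C}_{r}$ agree on the tangents $l_0,l_1,l_2$ and on the tangency point on $k_1$, hence are equal and tangent to all six lines, which is $(i)$.

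I expect the genuine obstacles to be twofold. First, the selection argument genuinely requires the whole family rather than one instance, so the porism of Theorem~\ref{thm: porismloop} is indispensable at this stage. Second, the propagation of equalities $C_{i,j}=\mathcal{C}_{\ell}$ or $\mathcal{C}_{r}$ rests on the rigidity of ``tangent to a given line at a given point'', which pins a conic through two conditions; I would need to check throughout that the relevant tangency points are not vertices and that the cross-axes of Theorem~\ref{thm: tangentlines} avoid $k_0\cap k_1$, $l_0\cap l_1$, etc., all of which hold under the standing genericity hypothesis on the six lines.
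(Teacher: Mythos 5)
Your proposal is correct, but it closes the cycle of implications by a genuinely different route. The paper proves $(i)\Rightarrow(ii)\Rightarrow(iii)\Rightarrow(iv)$ exactly as you do (the Section~\ref{subsection: projectivebilliards} construction, triviality, Theorem~\ref{thm: porismloop}), and its proofs of $(v)\Rightarrow(i)$ and $(vi)\Rightarrow(i)$ use precisely your mechanism (Theorem~\ref{thm: tangentlines} plus the uniqueness in Corollary~\ref{cor: 1para} for two five-tangent conics sharing four tangent lines and a contact point). The divergence is in how $(iv)$ reconnects to the rest: the paper derives $(v)$ and $(vi)$ from $(iv)$ by invoking Theorem~\ref{thm: Koenigs} to get the Koenigs multi-ratio identity and then applying Menelaus' theorem to four triangles, whereas you bypass Koenigs nets and Menelaus entirely, proving $(iv)\Rightarrow(i)$ directly by selecting the family member with $C_{1,1}$ tangent to $k_1$ at $k_1\cap(r_{1,1},r_{1,2})$ and propagating $C_{1,1}=C_{1,2}=\mathcal{C}_{\ell}$, $C_{2,1}=C_{2,2}=\mathcal{C}_{r}$ around the grid. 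This buys independence from the Koenigs characterisation, but at the cost of one step the paper's toolkit does not cover: your claim $C_{2,1}=C_{2,2}$ cannot come from Corollary~\ref{cor: 1para}, since these conics share only \emph{three} tangent lines $k_1,l_1,k_2$, not a common quadrilateral. You need the double-contact pencil fact: conics tangent to $k_1$ at $q$ and to $l_1$ at $s$ form the pencil spanned by the line-pair $k_1\cdot l_1$ and the double line through $q,s$, and the tangency condition with $k_2$ has exactly one non-degenerate solution in this pencil (the other root being the double line), provided $k_2$ avoids $q$ and $s$ --- which it does, since a tangent of a non-degenerate conic cannot pass through the contact point of a different tangent. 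You flagged this rigidity issue yourself and it is true, so your argument is valid once that small lemma is supplied; the paper's Koenigs/Menelaus detour avoids it and simultaneously exhibits the link to Koenigs nets exploited elsewhere in the paper.
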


\begin{figure}[t]
\begin{center}
\includegraphics[width =0.9\textwidth]{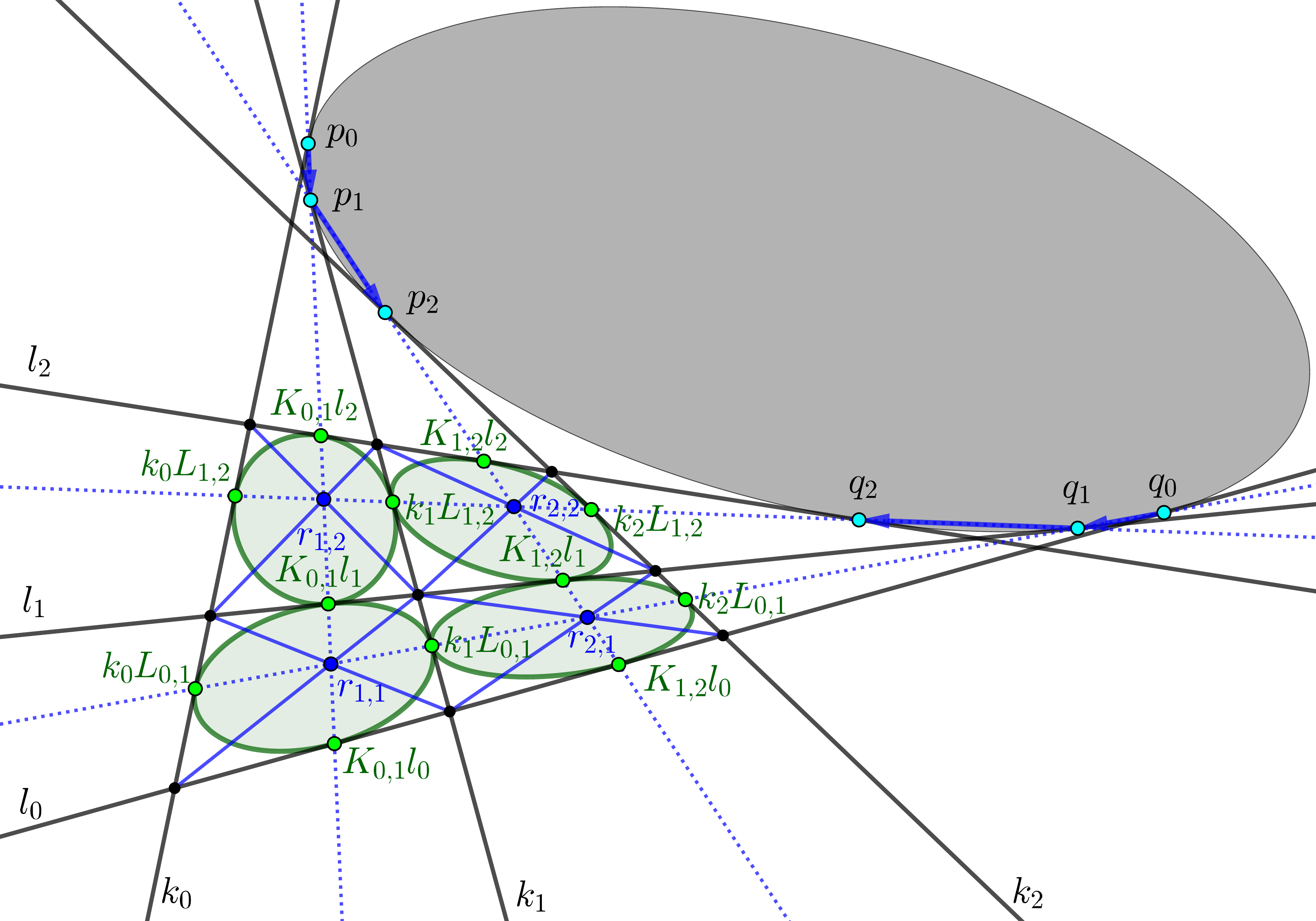}
\end{center}
\caption{The six generic gridlines are tangent to a non-degenerate conic if and only if the $2\times 2$ grid of quadrilaterals admits a $1$-parameter family of touching inscribed conics. Among the $1$-parameter family of touching inscribed conics, there is one instance of touching inscribed conics such that the tangency points satisfy some non-trivial collinearities which are represented by the dotted lines.}
\label{figure: maintheoremproof}
\end{figure}

\begin{proof}
Suppose that the generic lines $k_0, k_1, k_2, l_0, l _1, l_2$ are tangent to a non-degenerate conic $\mathcal{C}$. Let $p_0, p_1, p_2, q_0, q_1, q_2$ be the tangency points of the tangent lines $k_0, k_1, k_2, l_0, l _1, l_2$. Consider the two polygonal chains $p_0, p_1, p_2$ and $q_0, q_1, q_2$ that are inscribed in the non-degenerate conic $\mathcal{C}$. By the construction in Section~\ref{subsection: projectivebilliards}, the $2\times 2$ grid of quadrilaterals admits an instance of touching inscribed conics such that the following sets are sets of collinear points.
\begin{align*}
\{K_{0,1}l_0, r_{1,1}, K_{0,1}l_{1}, r_{1,2}, K_{0,1}l_{2}\} \qquad \{k_{0}L_{1,2}, r_{1,2}, k_{1}L_{1,2}, r_{2,2}, k_{2}L_{1,2}\}\\
\{K_{1,2}l_{0}, r_{2,1}, K_{1,2}l_{1}, r_{2,2}, K_{1,2}l_{2}\} \qquad \{k_{0}L_{0,1}, r_{1,1}, k_{1}L_{0,1},r_{2,1}, k_{2}L_{0,1}\}
\end{align*}
Therefore, \textit{(i)} implies \textit{(ii)}. Obviously, $(ii)$ implies $(iii)$. By Theorem~\ref{thm: porismloop}, \textit{(iii)} implies \textit{(iv)}. 

Suppose that the $2 \times 2$ grid of quadrilaterals admits a $1$-parameter family of touching inscribed conics. By Theorem~\ref{thm: Koenigs}, it is a Koenigs net. So, in any affine image of $\P$, 
\begin{align}\label{eq: mr1}
\frac{l(k_0\cap l_1,r_{1,1})}{l(r_{1,1},k_1\cap l_0)}\frac{l(k_1\cap l_0,r_{2,1})}{l(r_{2,1},k_2\cap l_1)}\frac{l(k_2\cap l_1,r_{2,2})}{l(r_{2,2},k_1\cap l_2)}\frac{l(k_1\cap l_2,r_{1,2})}{l(r_{1,2},k_0\cap l_1)}=1.
\end{align}
By applying Menelaus's theorem to the triangles $\triangle(k_0\cap l_1,k_2\cap l_1,k_1\cap l_2)$ and $\triangle(k_0\cap l_1, k_2\cap l_1, k_1\cap l_0)$, the identity~(\ref{eq: mr1}) implies that the two lines $(r_{1,1},r_{2,1})$ and $(r_{1,2}, r_{2,2})$ are concurrent with the line $l_1$. By applying Menelaus' theorem to the triangles $\triangle(k_1\cap l_0,k_1\cap l_2,k_0\cap l_1)$ and $\triangle(k_1\cap l_0, k_1\cap l_2, k_2\cap l_1)$, the identity~(\ref{eq: mr1}) implies that the two lines $(r_{1,1},r_{1,2})$ and $(r_{2,1}, r_{2,2})$ are concurrent with the line $k_1$. Therefore, $(iv)$ implies both $(v)$ and $(vi)$.

Suppose that the three lines $(r_{1,1}, r_{2,1})$,  $(r_{1,2}, r_{2,2})$ and $l_1$ are concurrent. Let $q_1$ be the concurrency point. The generic lines $k_0, k_1, k_2, l_0, l_1$ are tangent to a uniquely determined non-degenerate conic \cite[Corollary 8.1.12]{Casas}, say $\mathcal{A}$ . By Theorem~\ref{thm: tangentlines}, $q_1$ is a tangency point of $\mathcal{A}$. Likewise, the generic lines $k_0, k_1, k_2, l_1, l_2 $ are tangent to a uniquely determined non-degenerate conic, say $\mathcal{B}$, with the tangency point $q_1$. Then, by Corollary~\ref{cor: 1para}, $\mathcal{A} = \mathcal{B}$ because $\mathcal{A}$ and $\mathcal{B}$ have four common generic tangent lines $k_0, k_1, k_2, l_1$ and the common tangency point $q_1$. Therefore, $(v)$ implies $(i)$. Symmetrically, $(vi)$ also implies $(i)$. 
\end{proof}

\begin{cor}\label{cor: tangentgridlines}
Let $k_0, k_1, \ldots, k_m$, $m \geq 2$ and $l_0, l_1, \ldots, l_n$, $m \geq 3$, be generic lines in the projective plane. Consider the $m\times n$ grid of quadrilaterals $Q_{i,j} := \Box (k_{i-1}, l_{j-1}, k_i, l_j)$. We use $r_{i,j}$ to denote the intersection point of the diagonals of the quadrilateral $Q_{i,j}$. Then, the following are equivalent.
\begin{enumerate}[label=(\roman*)]
\item The generic lines  $k_0, k_1, \ldots, k_m, l_0, l_1, \ldots, l_n$ are tangent to a non-degenerate conic.
\item The $m \times n$ grid of quadrilaterals admits an instance of touching inscribed conics $C_{i,j}$ such that the following are collections of sets of collinear points. (See Figure~\ref{figure: fsafbabf}.) \begin{align*}
\{\{K_{i-1,i}l_j\}_{j= 0, \ldots, n}\}_{i = 1, \ldots, m} \qquad \{\{k_{i}L_{j-1,j}\}_{i= 0, \ldots, m}\}_{j = 1, \ldots, n}
\end{align*}
The points $K_{i-1,i}l_{j-1}$, $K_{i-1,i}l_j$, $k_{i-1}L_{j-1,j}$, $k_iL_{j-1,j}$ are defined to be the tangency points of the conic $\mathcal{C}_{i,j}$ that is inscribed in the quadrilateral $Q_{i,j}$. The tangency points are labelled by their tangent lines and by the strips of quadrilaterals.
\item The $m\times n$ grid of quadrilaterals admits an instance of touching inscribed conics.
\item The $m\times n$ grid of quadrilaterals admits a $1$-parameter family of touching inscribed conics.
\item $\{ \{r_{i,j}\}_{i=1, \ldots, m}\}_{j=1, \ldots, n}$ and $\{ \{r_{i,j}\}_{j=1, \ldots, n}\}_{i=1, \ldots, m}$ are collections of sets of collinear points.
\item $\{ \{r_{i,j}\}_{j=1, \ldots, n}\}_{i=1, \ldots, m}$ is a collection of sets of collinear points.
\end{enumerate}
\end{cor}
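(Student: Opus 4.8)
The plan is to prove the six conditions equivalent by closing two cycles through statement (i), each reducing the $m\times n$ situation to one of the two smaller results already available, namely Theorem~\ref{thm: mainthm} (the $2\times 2$ case) and Theorem~\ref{thm: tangentlines} (the single-strip case), and then gluing the local conics together using the classical fact that five generic lines are tangent to a unique non-degenerate conic. Concretely, I would establish $(i)\Rightarrow(ii)\Rightarrow(iii)\Rightarrow(iv)\Rightarrow(i)$ and, separately, $(i)\Rightarrow(v)\Rightarrow(vi)\Rightarrow(i)$; since both loops pass through (i), all six statements become equivalent. Several arrows are immediate: $(ii)\Rightarrow(iii)$ forgets the collinearity data, $(iii)\Rightarrow(iv)$ is the porism of Corollary~\ref{cor: porismSG}, and $(v)\Rightarrow(vi)$ simply discards the collinearity of the columns $\{r_{i,j}\}_i$ while retaining that of the rows $\{r_{i,j}\}_j$. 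For $(i)\Rightarrow(ii)$, I would let $p_0,\ldots,p_m$ and $q_0,\ldots,q_n$ be the tangency points on the common conic $\mathcal{C}$ and feed the two inscribed polygonal chains into the construction of Section~\ref{subsection: projectivebilliards}; that construction produces an instance of touching inscribed conics whose tangency points lie on the chords $(p_{i-1},p_i)$ and $(q_{j-1},q_j)$, which are exactly the collinearity sets demanded in (ii). For $(i)\Rightarrow(v)$, with every gridline tangent to $\mathcal{C}$, Proposition~\ref{prop: opptangencypts} places each $r_{i,j}$ on the chord joining the tangency points of the two $k$-lines (resp.\ $l$-lines) bounding its strip, exactly as in the last paragraph of Theorem~\ref{thm: tangentlines}, so both the rows and the columns of diagonal points are collinear.

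The first substantive step is $(vi)\Rightarrow(i)$, and this is where the hypothesis $n\ge 3$ is used. Fixing $i$, the quadrilaterals $Q_{i,1},\ldots,Q_{i,n}$ form the strip bounded by $k_{i-1},k_i$, and the collinearity of $\{r_{i,j}\}_{j=1,\ldots,n}$ (at least three points, since $n\ge 3$) is precisely condition (ii) of Theorem~\ref{thm: tangentlines} for the lines $k_{i-1},k_i,l_0,\ldots,l_n$; that theorem then yields a non-degenerate conic $\mathcal{C}_i$ tangent to all of them. Two consecutive conics $\mathcal{C}_i$ and $\mathcal{C}_{i+1}$ share the tangent lines $k_i,l_0,\ldots,l_n$, a total of $n+2\ge 5$ generic lines, so they coincide; since $m\ge 2$ there are at least two such strips, and chaining gives a single conic tangent to every $k_i$ and every $l_j$, which is (i).

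The second substantive step is $(iv)\Rightarrow(i)$, proved by the analogous gluing one dimension up. Restricting the structure to each $2\times 2$ sub-block (rows $i,i+1$, columns $j,j+1$) yields an instance of touching inscribed conics on that block, whence Theorem~\ref{thm: mainthm} provides a non-degenerate conic $\mathcal{E}_{i,j}$ tangent to the six lines $k_{i-1},k_i,k_{i+1},l_{j-1},l_j,l_{j+1}$. Horizontally adjacent blocks share the five lines $k_{i-1},k_i,k_{i+1},l_j,l_{j+1}$, and after fusing each row-band the resulting band-conics share $k_i,k_{i+1},l_0,\ldots,l_n$ ($n+3\ge 5$ lines) with their vertical neighbours; hence all the $\mathcal{E}_{i,j}$ agree and their common conic is tangent to every gridline. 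The overlap counts again force exactly the conditions $m\ge 2$ and $n\ge 3$, so that enough mutually overlapping blocks exist to reach all the lines.

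I expect the main obstacle to be bookkeeping rather than conceptual: organising the propagation so that every pair of glued conics genuinely shares five \emph{generic} lines, and tracking the precise roles of $m\ge 2$ and $n\ge 3$. The one genuinely delicate point is the clause in Theorem~\ref{thm: tangentlines}(ii) requiring that the line carrying the collinear diagonal points avoid the vertex $k_{i-1}\cap k_i$: in the step $(vi)\Rightarrow(i)$ I would need to argue, from the genericity of the gridlines, that the points $\{r_{i,j}\}_j$ cannot all lie on a line through that vertex, so that the conic $\mathcal{C}_i$ is legitimately produced.
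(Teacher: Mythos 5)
Your proposal is correct and takes essentially the same approach as the paper: the decisive step $(vi)\Rightarrow(i)$ is argued exactly as in the paper's proof, by applying Theorem~\ref{thm: tangentlines} to each strip $k_{i-1},k_i,l_0,\ldots,l_n$ and identifying consecutive conics $\mathcal{C}_i$, $\mathcal{C}_{i+1}$ through their $n+2\geq 5$ common generic tangent lines, while the remaining implications are dispatched by the same prior results (the construction of Section~\ref{subsection: projectivebilliards} for $(i)\Rightarrow(ii)$, the porism for $(iii)\Rightarrow(iv)$, and Proposition~\ref{prop: opptangencypts} for the collinearity of the diagonal points). Your reorganisation into two cycles through $(i)$ --- including the extra $(iv)\Rightarrow(i)$ via $2\times 2$ blocks and Theorem~\ref{thm: mainthm}, which in fact needs only $n\geq 2$ --- and your explicit check that the line carrying the collinear points $\{r_{i,j}\}_j$ avoids $k_{i-1}\cap k_i$ (forced by genericity, since otherwise the associated projectivity would be a perspectivity making three of the lines $l_j$ concurrent) are minor refinements of details the paper treats as straightforward.
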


\begin{proof}

Analogously to the proof of Theorem~\ref{thm: mainthm}, the implications $(i) \implies (ii) \implies (iii) \implies (iv)\implies (v)\implies (vi)$ are straightforward. The only step we comment is $(vi) \implies (i)$. Suppose that $\{ \{r_{i,j}\}_{j=1, \ldots, n}\}_{i=1, \ldots, m}$ is a collection of sets of collinear points. By Theorem~\ref{thm: tangentlines}, for any $i  \in \{1, \ldots, m\}$, the generic lines $k_{i-1}, k_i, l_0, l_1, \ldots, l_n$ are tangent to a non-degenerate conic, say $\mathcal{C}_i$. For any $i  \in \{1, \ldots, m-1\}$, the non-degenerate conics $\mathcal{C}_i$ and $\mathcal{C}_{i+1}$ are identical because they have five common tangent lines $k_i, l_0, l_1, l_2, l_3$. Therefore, $(vi) \implies (i)$. 
\end{proof}

\begin{figure}[htbp]
\begin{center}
  \begin{subfigure}[t]{0.45\textwidth}
   \includegraphics[width=\textwidth]{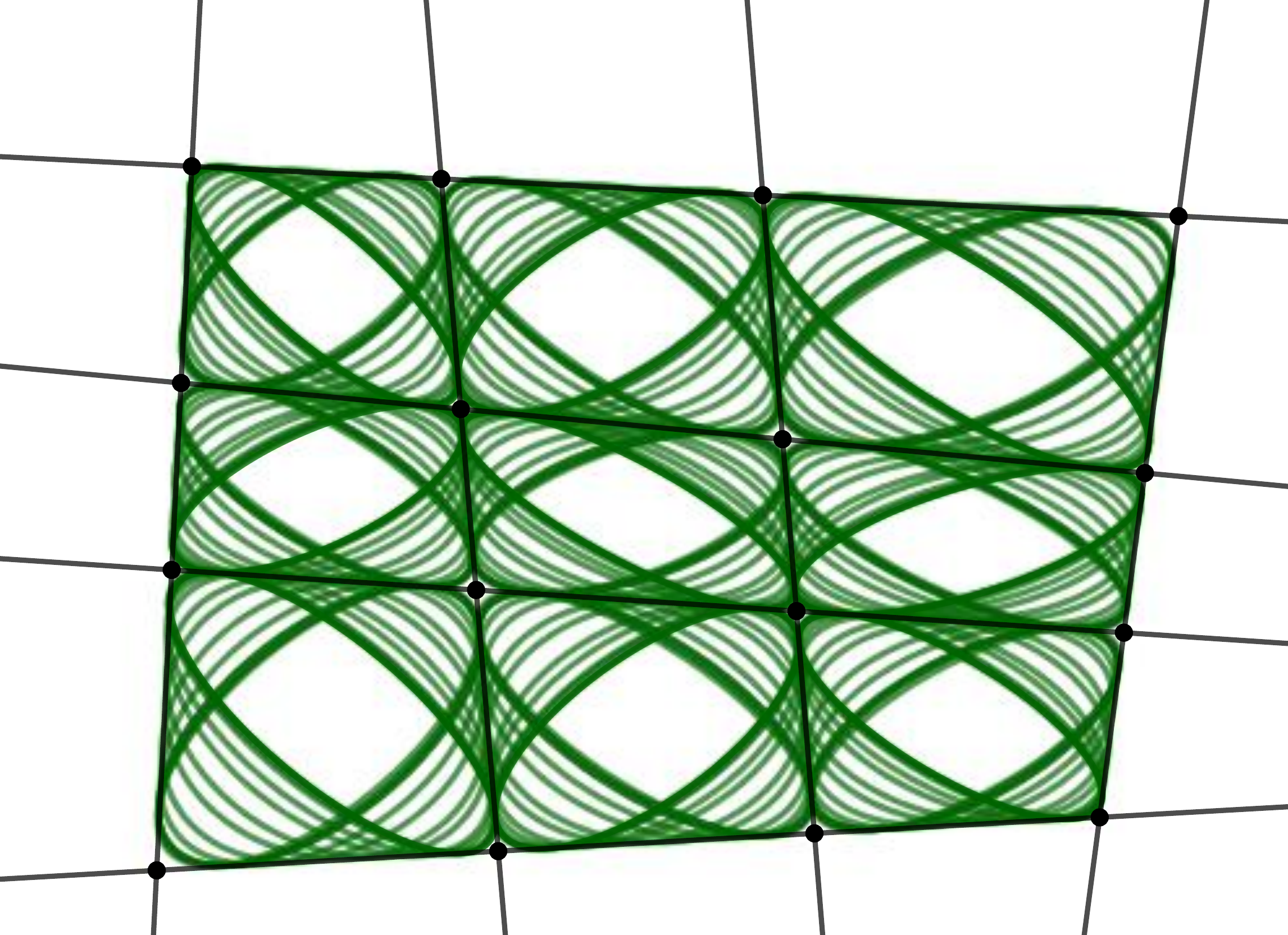}
 \label{figure: fdbdfb}
 \end{subfigure}
\hspace{0.5cm}%
  \begin{subfigure}[t]{0.45\textwidth}
    \includegraphics[width=\textwidth]{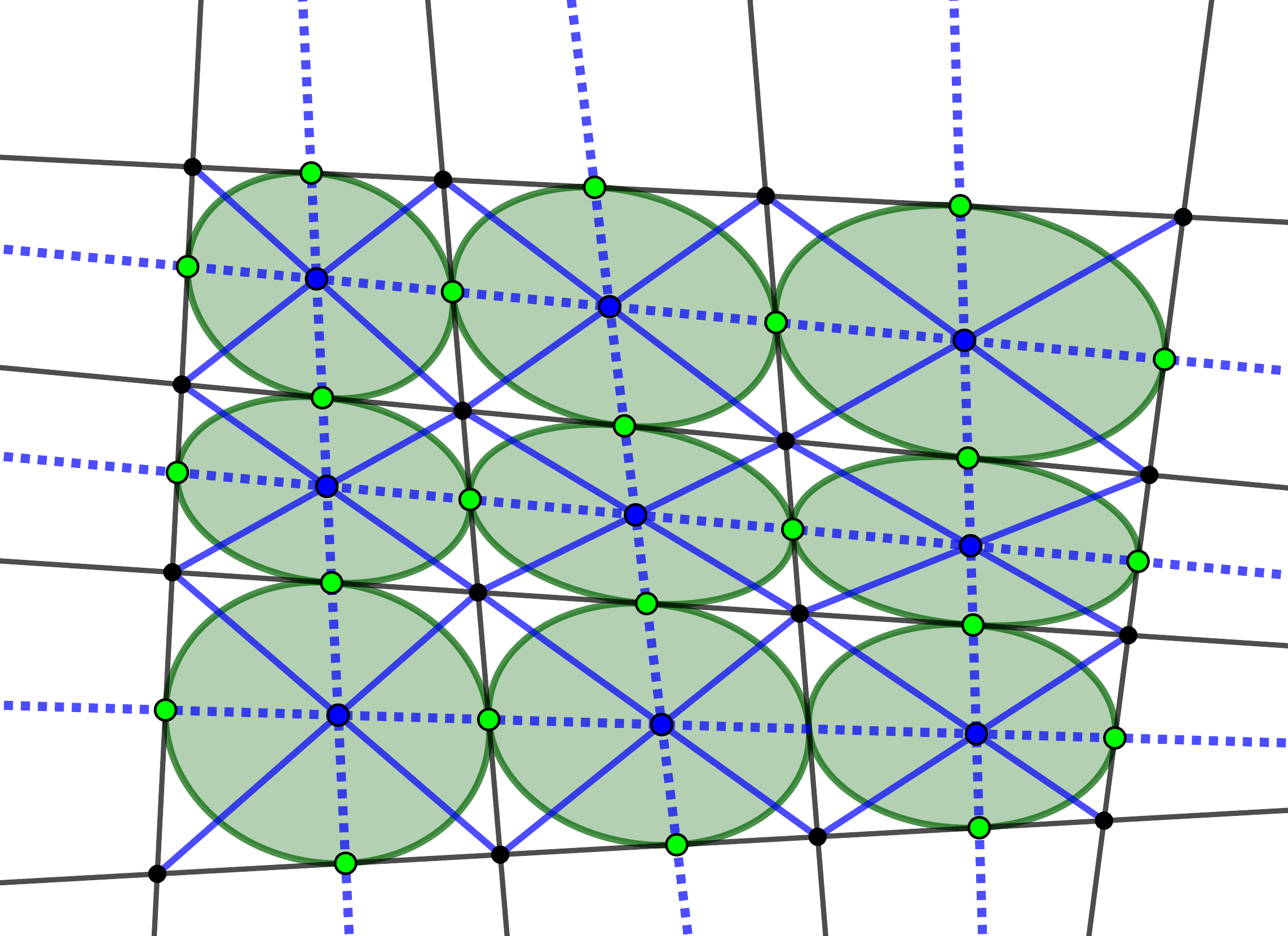}
\label{figure: fdbadfbfbd}
  \end{subfigure}
  \end{center}
  \caption{If a $3\times 3$ grid of quadrilaterals admits an instance of touching inscribed conics, then there is a $1$-parameter family of touching inscribed conics. Among the $1$-parameter family, there is one instance of touching inscribed conics such that the tangency points satisfy some non-trivial collinearities which are represented by the dotted lines.}
  \label{figure: fsafbabf}
\end{figure}

\begin{figure}[htbp]
\begin{center}
    \includegraphics[width=0.65\textwidth]{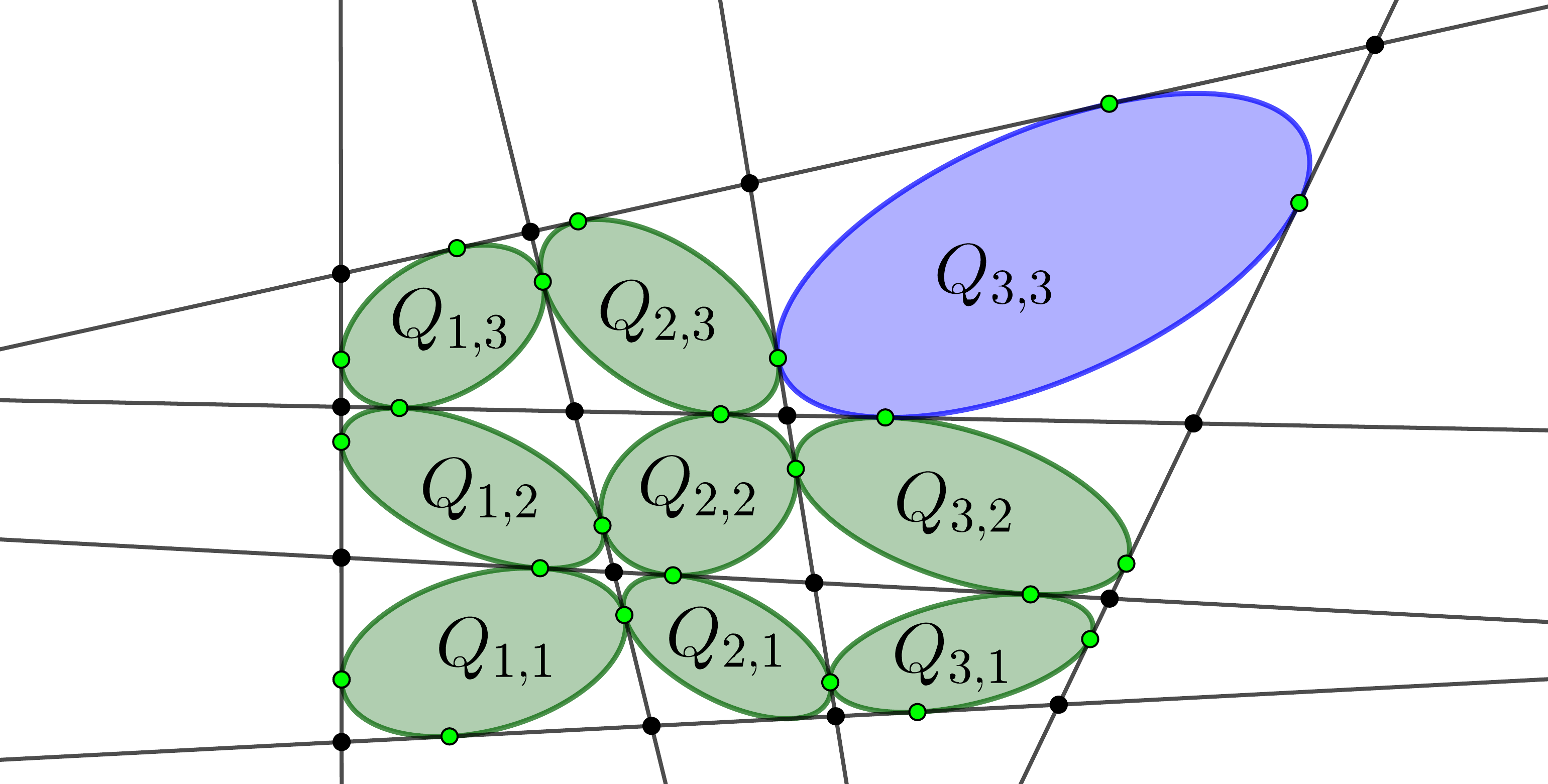}
\end{center}
\caption{A $3 \times 3$ grid of quadrilaterals $Q_{i,j}$. Suppose that each of the quadrilaterals $Q_{1,1}$, $Q_{1,2}$, $Q_{1,3}$, $Q_{2,1}$, $Q_{2,2}$, $Q_{2,3}$, $Q_{3,1}$, $Q_{3,2}$ is equipped with an inscribed conic such that, for any two neighbouring quadrilaterals, the inscribed conics are touching. Then, the quadrilateral $Q_{3,3}$ admits an inscribed conic that touches the two conics that are inscribed in the quadrilaterals $Q_{3,2}$ and $Q_{2,3}$. By Corollary~\ref{cor: tangentgridlines}, the eight lines are tangent to a conic.}
       \label{figure: 3times3proof}
\end{figure}

Koenigs nets can be treated as discrete conjugate nets with equal Laplace invariants \cite{DDG}. By Theorem~\ref{thm: Koenigs}, grids of quadrilaterals with touching inscribed conics are planar $2$-dimensional Koenigs nets. It is worth mentioning that K{\oe}nigs showed in \cite{Koenigs} that planar nets with equal Laplace invariants can be understood locally by the condition that six lines are tangent to a conic.

\subsection{Conics associated to the strips}

\begin{thm}\label{thm: gridstructure}
Let $k_0, k_1, \ldots, k_m$ and $l_0, l_1, \ldots, l_n$ $(m,n \in \N_{\geq 2})$ be generic lines in the projective plane. Assume that the quadrilaterals $Q_{i,j}:= \Box (k_{i-1}, l_{j-1}, k_i, l_j)$ are equipped with inscribed conics $\mathcal{C}_{i,j}$ such that, for any two neighbouring quadrilaterals, the inscribed conics are touching. Let  $k_{i-1}L_{j-1,j}$, $k_iL_{j-1,j}$, $K_{i-1,i}l_{j-1}$, $K_{i-1,i}l_j$ be the corresponding tangency points, labelled by their tangent lines and by the strips of quadrilaterals. Then, all of the gridlines $k_0, k_1, \ldots, k_m$ and $l_0, l_1, \ldots, l_n$ are tangent to a conic $\mathcal{C}$. For any fixed $i$, the points $\{K_{i-1,i}l_j\}_{j=0, 1, \ldots, n}$ are contained in a conic $\mathcal{A}_{i}$ that has double contact with the conic $\mathcal{C}$. For any fixed $j$, the points $\{k_{i}L_{j-1,j}\}_{i=0, 1, \ldots, m}$ are contained in a conic $\mathcal{B}_{j}$ that has double contact with the conic $\mathcal{C}$. (See Figure~\ref{figure: gjerj}.) Among the $1$-parameter family of touching inscribed conics, there is one instance such that all of the conics $\{\mathcal{A}_i\}_{i=1, \ldots, m}$ and $\{\mathcal{B}_j\}_{j = 1, \ldots, n}$ are double lines. (See Figure~\ref{figure: fsafbabf}.)
\end{thm}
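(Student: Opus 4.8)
The plan is to prove the four assertions in order, reusing the grid equivalences already established. For the first assertion, note that since the grid carries an instance of touching inscribed conics, so does every $2\times 2$ block of quadrilaterals; hence by Theorem~\ref{thm: mainthm} (the implication $(iii)\Rightarrow(i)$) the six lines bounding each block are tangent to a non-degenerate conic. Any two horizontally or vertically adjacent blocks share five of their six lines, and a non-degenerate conic is determined by five tangent lines, so all these block-conics coincide in a single conic $\mathcal{C}$ tangent to every gridline. Write $p_i$ for the contact point of $k_i$ with $\mathcal{C}$ and $q_j$ for that of $l_j$, and set $m_i:=(p_{i-1},p_i)$, the chord of contact of $k_{i-1},k_i$.

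To prove the double-contact statement I would introduce, for fixed $i$, the pencil of conics $\{\,Q_{\mathcal{C}}+t\,m_i^2 : t\in\mathbb{P}^1\,\}$ spanned by $\mathcal{C}$ (with quadratic form $Q_{\mathcal{C}}$) and the double line $m_i^2$. Every member with $t\neq 0,\infty$ meets $\mathcal{C}$ only at $p_{i-1},p_i$ and is tangent there, i.e.\ has double contact with $\mathcal{C}$ along $m_i$, while $t=\infty$ is the double line $m_i$ itself. Let $\mathcal{A}_i$ be the unique pencil member through $K_{i-1,i}l_0$; I claim every $K_{i-1,i}l_j$ lies on $\mathcal{A}_i$, and I would prove this by induction on $j$. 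Since $\mathcal{C}$ is inscribed in $Q_{i,j}$ with $l_{j-1},l_j$ opposite edges, Proposition~\ref{prop: opptangencypts} places the diagonal point $r_{i,j}$ on both $m_i$ and the chord $(q_{j-1},q_j)$, so $r_{i,j}=m_i\cap(q_{j-1},q_j)$, and it makes the two tangency points of $\mathcal{C}_{i,j}$ collinear with $r_{i,j}$, i.e.\ $K_{i-1,i}l_j=\pi_{r_{i,j}}(K_{i-1,i}l_{j-1})$ for the central projection $\pi_{r_{i,j}}:l_{j-1}\to l_j$. Thus the inductive step reduces to the following Key Lemma.

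\emph{Key Lemma.} If $\mathcal{A}$ has double contact with $\mathcal{C}$ along a chord $m$, and $l,l'$ are tangent to $\mathcal{C}$ at $q,q'$ with $r:=m\cap(q,q')$, then the central projection $\pi_r:l\to l'$ carries $\mathcal{A}\cap l$ onto $\mathcal{A}\cap l'$. To prove it I would invoke Desargues' involution theorem: the pencil $\{Q_{\mathcal{C}}+t\,m^2\}$ cuts $l$ in an involution whose two fixed points are the contact point $q$ of the member $\mathcal{C}$ and the point $m\cap l$ of the member $m^2$; this involution is therefore harmonic conjugation with respect to $\{q,\,m\cap l\}$, and $\mathcal{A}\cap l$ is one of its orbits. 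Now $r\in(q,q')$ forces $\pi_r(q)=q'$ and $r\in m$ forces $\pi_r(m\cap l)=m\cap l'$, so $\pi_r$ conjugates the involution on $l$ to the corresponding one on $l'$ and hence sends pencil-orbits to pencil-orbits. The induced projective map on orbit-parameters already fixes $t=0,\infty$; evaluating it at the third point $S:=l\cap l'$, which $\pi_r$ fixes and which lies on one and the same pencil-member whether viewed on $l$ or on $l'$, forces the map to be the identity, so $\pi_r$ sends the $\mathcal{A}$-orbit to the $\mathcal{A}$-orbit. This establishes the lemma, completes the induction, and gives the statement for $\mathcal{A}_i$; the statement for $\mathcal{B}_j$ is identical with the two line families exchanged and chord $(q_{j-1},q_j)$ in place of $m_i$.

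Finally, for the distinguished instance I would exhibit it directly as the one built from the inscribed polygonal chains $p_0,\dots,p_m$ and $q_0,\dots,q_n$ of Section~\ref{subsection: projectivebilliards}: there $K_{i-1,i}l_j=m_i\cap l_j$ and $k_iL_{j-1,j}=(q_{j-1},q_j)\cap k_i$, so simultaneously every $\mathcal{A}_i$ degenerates to the double line $m_i$ (the $t=\infty$ member) and every $\mathcal{B}_j$ to the double line $(q_{j-1},q_j)$. I expect the Key Lemma to be the main obstacle, and within it the final matching of the pencil parameter: recognizing $S=l\cap l'$ as the pivot that upgrades ``$\pi_r$ conjugates the two involutions'' to ``$\pi_r$ respects the pencil parameter'' is the delicate point, whereas the reductions via Proposition~\ref{prop: opptangencypts} and the pencil through $m_i^2$ are routine.
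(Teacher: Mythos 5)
Your proposal is correct in substance but takes a genuinely different route from the paper's. For the common tangent conic $\mathcal{C}$ the paper simply invokes Corollary~\ref{cor: tangentgridlines}; your re-derivation from Theorem~\ref{thm: mainthm} applied to overlapping $2\times 2$ blocks, glued by five-tangent uniqueness, is a correct unwinding of the same mechanism. The real divergence is in the double-contact statement. The paper's engine is Lemma~\ref{lem: 2ndtangent}: an explicit symmetric bilinear form $\psi$ produces, for each quadrilateral $Q_{i,j}$ of the strip, a conic $\mathcal{D}_j$ through $p_{i-1}$, $p_i$, $K_{i-1,i}l_{j-1}$, $K_{i-1,i}l_j$ tangent to $k_{i-1}$, $k_i$, and consecutive $\mathcal{D}_j$'s are identified by the uniqueness of a conic with three points and two tangencies prescribed. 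You replace this algebraic kernel by a synthetic one: the double-contact pencil $\{Q_{\mathcal{C}}+t\,m_i^2\}$, Desargues' involution on the tangent lines, and the observation (via Proposition~\ref{prop: opptangencypts}, correctly applied twice, to $\mathcal{C}$ and to $\mathcal{C}_{i,j}$) that the touching condition propagates tangency points by the central projections $\pi_{r_{i,j}}$, which conjugate the Desargues involutions; induction along the strip then performs the gluing. Your approach explains conceptually why the points stay on one pencil member, where the paper's computation is opaque; the paper's approach is shorter and avoids the parameter-matching issue below. Your treatment of the distinguished collinear instance via the inscribed polygonal chains of Section~\ref{subsection: projectivebilliards} coincides with the paper's (which cites Theorem~\ref{thm: mainthm} and Corollary~\ref{cor: tangentgridlines}).

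The one genuine soft spot is your pivot $S=l\cap l'$ in the Key Lemma. For $S$ to be a \emph{third} fixed parameter you need $t_S\neq 0,\infty$, i.e.\ $S\notin\mathcal{C}$ (automatic, since $S$ is the meet of two distinct tangents) and $S\notin m$ --- and the latter can fail even for lines that are generic in the paper's sense (no three concurrent). For example, if $Q_{i,j}$ carries $\mathcal{C}$ projectively as the incircle of a square, with $k$'s the lines $y=\pm 1$ and $l,l'$ the lines $x=\pm 1$, then $m=\{x=0\}$ and $S$ is the point at infinity of $m$, so $t_S=\infty$ and your third fixed point collapses onto one you already have. The repair is immediate and makes the argument uniform: use instead the degenerate pencil member $k_{i-1}\cdot k_i$ (the pair of tangents at $p_{i-1},p_i$ lies in the pencil spanned by $Q_{\mathcal{C}}$ and $m^2$). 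Its trace on $l$ is the vertex pair $\{k_{i-1}\cap l,\; k_i\cap l\}$, and since $\pi_r$ projects from the diagonal point of $Q_{i,j}$ it sends $k_{i-1}\cap l\mapsto k_i\cap l'$ and $k_i\cap l\mapsto k_{i-1}\cap l'$, i.e.\ onto the trace of the \emph{same} member on $l'$; and this member is never $\mathcal{C}$ nor $m^2$. That gives three distinct fixed parameters unconditionally, so the Möbius map is the identity and your Key Lemma, hence the whole proof, goes through.
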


\begin{figure}[htbp]
\begin{center}
  \begin{subfigure}[t]{0.45\textwidth}
    \includegraphics[width=\textwidth]{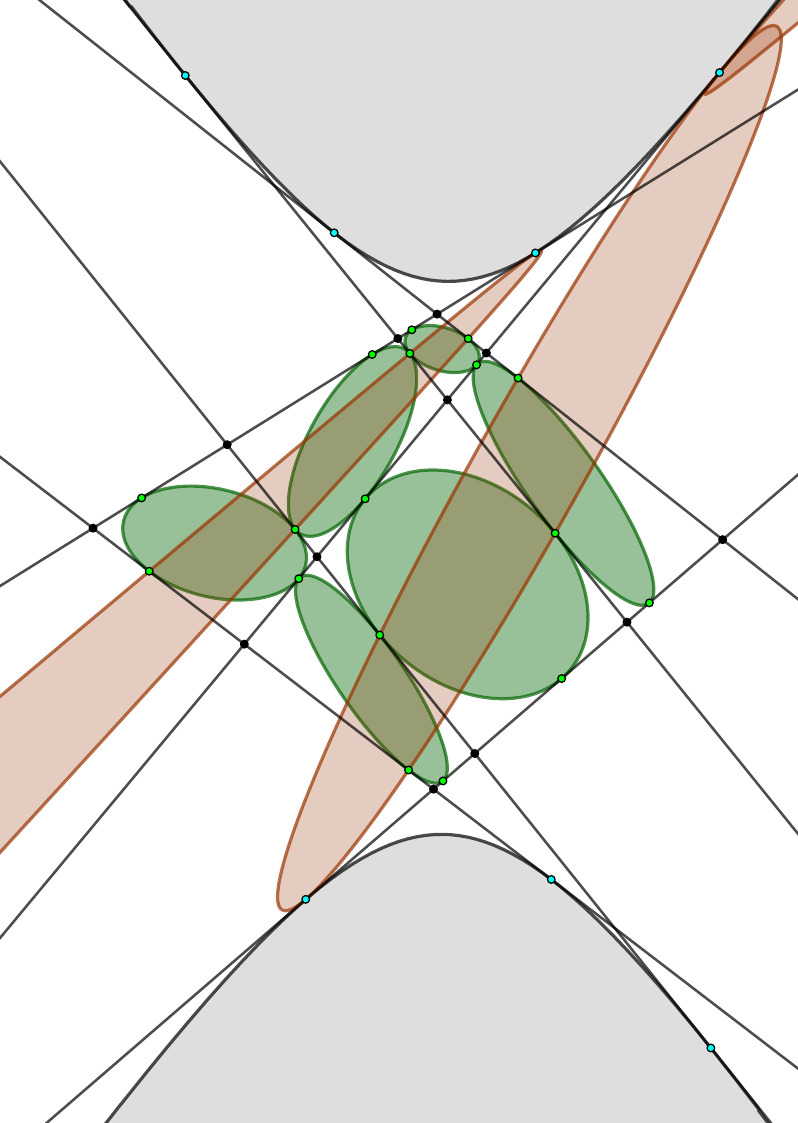}     
 \end{subfigure}
\hspace{0.9cm}%
 \begin{subfigure}[t]{0.45\textwidth}
    \includegraphics[width=\textwidth]{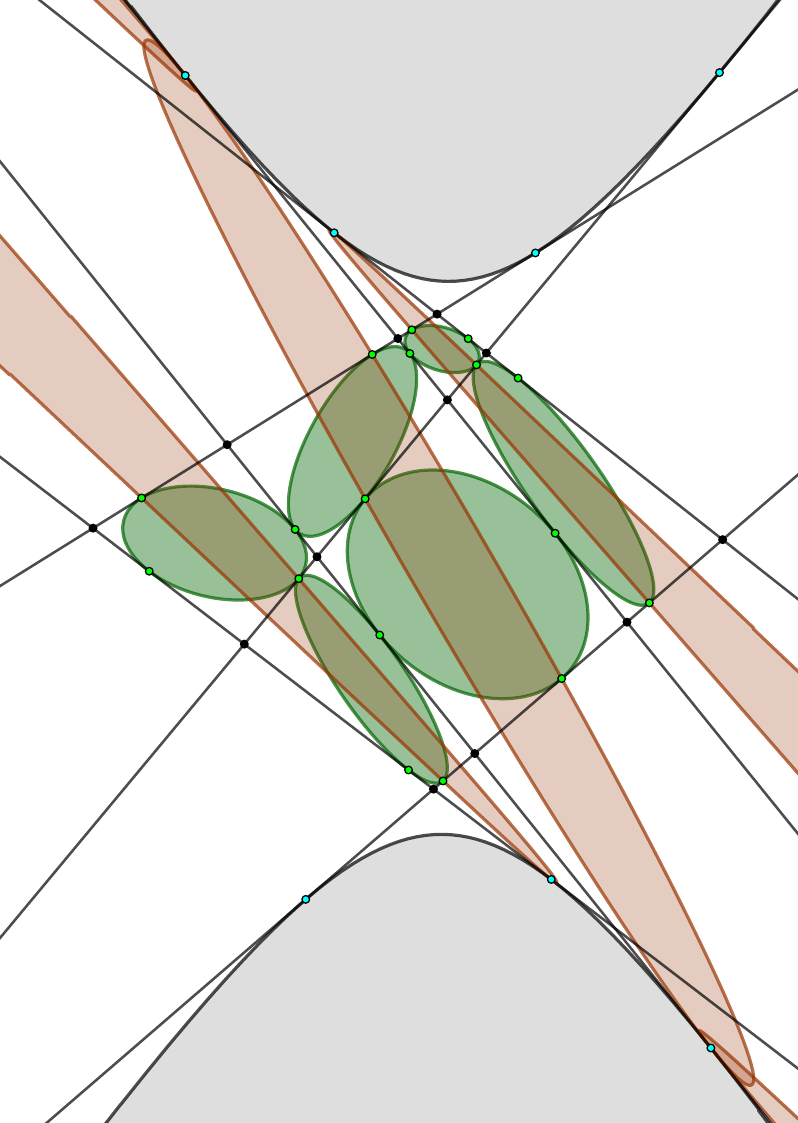}
  \end{subfigure}
  \end{center}
 \caption{A $2 \times 3$ grid of quadrilaterals with touching inscribed conics. The lines are tangent to a conic $\mathcal{C}$. The tangency points of the touching inscribed conics are contained in conics that have double contact with $\mathcal{C}$.} 
  \label{figure: gjerj}
\end{figure}

We start with Lemma~\ref{lem: 2ndtangent}, which will be used in the proof of Theorem~\ref{thm: gridstructure}.

\begin{lem}\label{lem: 2ndtangent}
Let $p_{([u],[v])}$, $p_{([w],[x])}$, $q_{([x],[u])}$, $q_{([v],[w])}$ be the tangency points of a conic that is inscribed in the quadrilateral $\Box([u],[v],[w],[x])$ in $\P$. Let $p_{([u],[v])}^\ast$, $p_{([w],[x])}^\ast$, $q_{([x],[u])}^\ast$, $q_{([v],[w])}^\ast$  be the tangency points of another inscribed conic. The tangency points are labelled by their tangent lines. Then, there exists a unique conic containing the points $p_{([w],[x])}$, $p_{([u],[v])}$, $q_{([x],[u])}^\ast$, $q_{([v],[w])}^\ast$ and with the tangent lines $([w], [x])$, $([u], [v])$. Symmetrically, there exists a unique conic containing the points $q_{([x],[u])}$, $q_{([v],[w])}$, $p_{([u],[v])}^\ast$, $p_{([w],[x])}^\ast$ and with the tangent lines $([x], [u])$, $([v], [w])$.
\end{lem}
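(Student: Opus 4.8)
The plan is to recognise at the outset that this is a \emph{closure} statement and not merely a dimension count. A conic that is tangent to a given line at a prescribed point absorbs two conditions, so demanding that the sought conic be tangent to $([u],[v])$ at $p_{([u],[v])}$ and to $([w],[x])$ at $p_{([w],[x])}$ already consumes four of the five degrees of freedom of a conic; requiring it to pass through the two further points $q_{([x],[u])}^\ast$ and $q_{([v],[w])}^\ast$ imposes two more conditions, one too many. Thus the content of the lemma is precisely that these last two conditions are \emph{compatible}, and I would establish this by writing down the relevant pencil explicitly in canonical coordinates.

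First I would normalise coordinates using Theorem~\ref{thm: inquad}: choose representatives with the diagonal point $r=[u+w]=[v+x]$, so that $u=(1,0,0)$, $v=(0,1,0)$, $w=(0,0,1)$, $x=(1,-1,1)$, and the four edge-lines become $z_3=0$, $z_1=0$, $z_1+z_2=0$, $z_2+z_3=0$. Using the parametrisation of the $1$-parameter family of inscribed conics from Corollary~\ref{cor: 1para} (equivalently, by reading tangency points off the dual line-conic associated to the shared Ceva point $r$), I would record the tangency points of the two inscribed conics as functions of single parameters $s$ and $t$; in particular $p_{([u],[v])}=(s,1,0)$ and $p_{([w],[x])}=(1,-1,s+1)$ for the first conic, while $q_{([x],[u])}^\ast=(1+t,-1,1)$ and $q_{([v],[w])}^\ast=(0,1,t)$ for the second.

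Next I would invoke the classical description of the bitangent (double-contact) pencil: the conics tangent to $([u],[v])$ at $p_{([u],[v])}$ and to $([w],[x])$ at $p_{([w],[x])}$ are exactly those of the form $\alpha\,\ell_{([u],[v])}\ell_{([w],[x])}+\beta\,m^2=0$, where $m$ is the chord of contact $(p_{([u],[v])},p_{([w],[x])})$, here $m=z_1-sz_2-z_3$. Since this is a genuine pencil, one incidence condition pins down a unique member, which handles uniqueness (and non-degeneracy for generic parameters). The crux is then to impose passage through each of $q_{([x],[u])}^\ast$ and $q_{([v],[w])}^\ast$ and to check that the two resulting linear equations in $[\alpha:\beta]$ coincide. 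In the coordinates above both conditions collapse to the single equation $\alpha\,t+\beta\,(s+t)^2=0$, which is exactly the compatibility required: the member of the pencil through $q_{([x],[u])}^\ast$ automatically contains $q_{([v],[w])}^\ast$, giving existence.

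The symmetric statement then follows from the same computation after interchanging the two pairs of opposite edge-lines and the two conics (swapping $p\leftrightarrow q$ and $s\leftrightarrow t$): the analogous bitangent pencil $\alpha\,\ell_{([x],[u])}\ell_{([v],[w])}+\beta\,\tilde m^2$ with $\tilde m=z_1+sz_2-z_3$ yields the identical equation $\alpha\,t+\beta\,(s+t)^2=0$. I expect the main obstacle to be exactly this coincidence of the two incidence conditions: a priori there is no reason the pencil member fixed by one of the starred points should also contain the other, and verifying it relies essentially on the explicit tangency-point formulas forced by the common diagonal point $r$. Everything else is routine bitangent-pencil bookkeeping.
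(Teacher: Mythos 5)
Your proof is correct, and it reaches the result by a genuinely different mechanism than the paper's own proof. The paper keeps the scalar $\gamma$ (with $v+x=\gamma(u+w)$) general, normalises the \emph{starred} conic so that its tangency points are $[u+v],[v+w],[w+x],[x+u]$, writes the other conic's points as $[u+\lambda v]$, etc., and then simply exhibits an explicit symmetric bilinear form $\psi$ --- all six coefficients given in closed form in $\lambda$ and $\gamma$ --- verifying by direct substitution that $\mathcal{C}(\psi)$ contains the four required points and has the two required tangent lines via polarity; uniqueness is left implicit. You instead spend the projective freedom on the quadrilateral itself, which is legitimate since the statement is projectively invariant and four generic points can be normalised (forcing $\gamma=1$), and then let the double-contact pencil $\alpha\,\ell_1\ell_2+\beta\,m^2$ carry both tangency conditions structurally, so the whole lemma collapses to the visible coincidence of the two incidence conditions. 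I checked your coordinates: the edge-lines, the chord $m=z_1-sz_2-z_3$, both evaluations giving $\alpha t+\beta(s+t)^2=0$, and the symmetric computation with $\tilde m=z_1+sz_2-z_3$ are all correct. Your route buys two things the paper does not make explicit: uniqueness (every admissible conic lies in your pencil, since a conic tangent to $([u],[v])$ and containing $p_{([u],[v])}$ necessarily touches there, and one non-base-point incidence then pins down the member), and a transparent explanation of why the overdetermined system --- six conditions on five parameters --- is compatible, which is indeed the real content. Two minor caveats, both shared with the paper's own treatment: your tangency-point formulas do rest on Theorem~\ref{thm: inquad} (rescaling $v\mapsto\mu v$, $x\mapsto\mu x$ with $s=1/\mu$ preserves the diagonal condition, so your citation suffices), and when $t=0$ or $s+t=0$ the selected pencil member degenerates to the line pair or the double line $m^2$ --- the same degenerate case the paper tolerates after Lemma~\ref{lem: involution}.
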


\begin{figure}[htbp]
\begin{center}
    \includegraphics[width=0.95\textwidth]{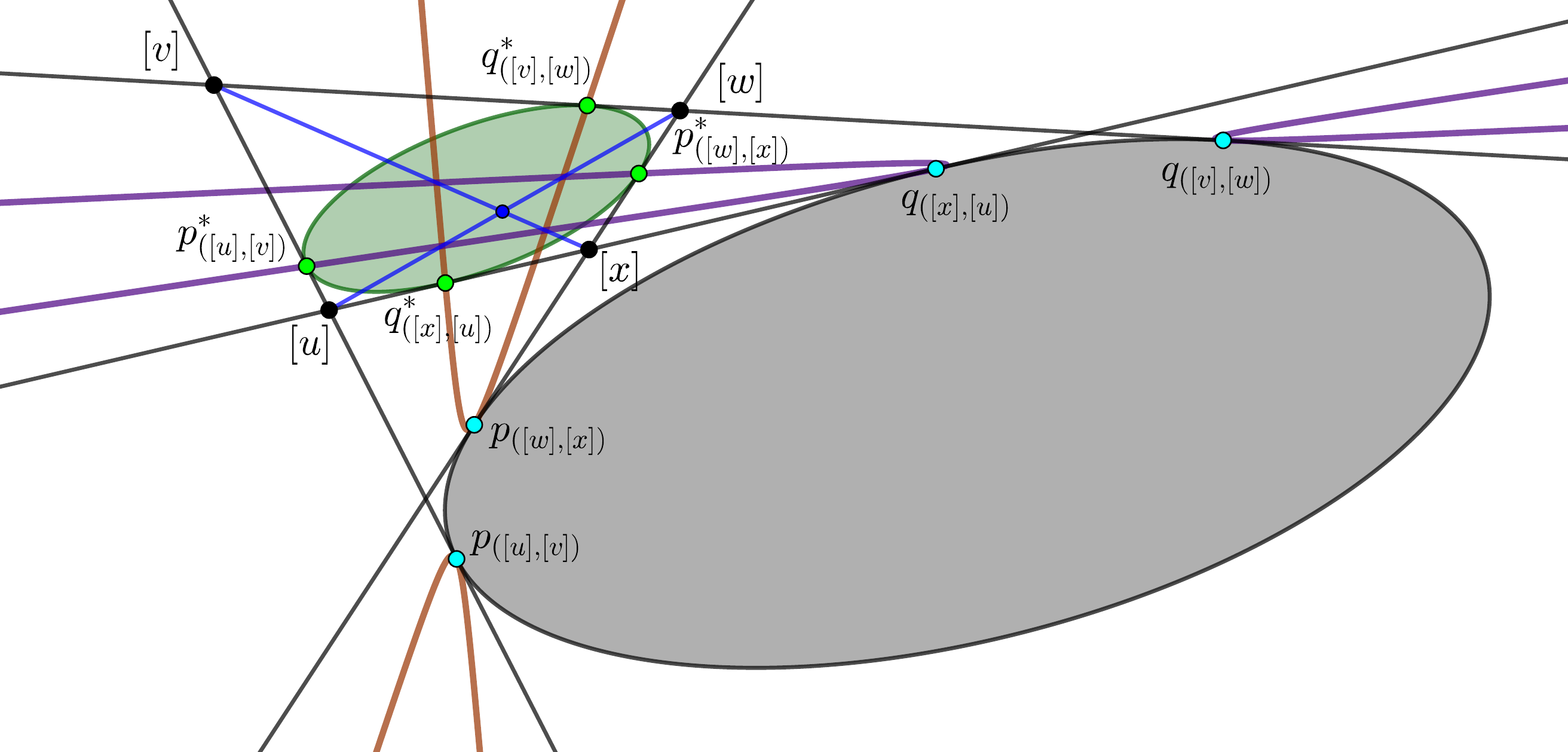}
\end{center}
\caption{There exists a conic containing the points $p_{([w],[x])}$, $p_{([u],[v])}$, $q_{([x],[u])}^\ast$, $q_{([v],[w])}^\ast$ and with the tangent lines $([w], [x])$, $([u], [v])$. Symmetrically, there exists a conic containing the points $q_{([x],[u])}$, $q_{([v],[w])}$, $p_{([u],[v])}^\ast$, $p_{([w],[x])}^\ast$ and with the tangent lines $([x], [u])$, $([v], [w])$.}
       \label{figure: fgrgagaeg}
\end{figure}

\begin{proof}
By Theorem~\ref{thm: inquad}, it is possible to choose the representative vectors $u$, $v$, $w$, $x$ for the vertices so that $p_{([u],[v])}^\ast =[u+v]$, $q_{([v],[w])}^\ast = [v+w]$, $p_{([w],[x])}^\ast =  [w+x]$, $q_{([x],[u])}^\ast = [x+u]$ and so that $[u+w] = [v+x]$ is the intersection of the two diagonals. By Theorem~\ref{thm: inquad}, for any non-zero $\l \in \R$, the points $[u+\l v]$, $[\l v +w]$, $[w+ \l x]$, $[\l x +u]$ are the tangency points of an inscribed conic. Choose the non-zero scalar $\l \in \R$ so that $p_{([u],[v])} = [u+\l v]$, $q_{([v],[w])}= [\l v +w]$, $p_{([w],[x])} = [w+ \l x]$, $q_{([x],[u])} = [\l x +u]$. To prove the lemma, it suffices to show that there exists a conic $\conic{\psi}$ containing the points $[u+\l v]$, $[v+w]$, $[w+\l x]$, $[x+u]$ and such that that $([u], [v])$ and $([w], [x])$ are tangent lines.% at the points  $[u+\l v]$ and $[w+\l x]$.

Define a non-zero symmetric bilinear form $\psi : \R^3 \times \R^3 \to \R$ by the following system of equations on the basis $u$, $v$, $w \in \R^3$.
\begin{align*}
\psi(u,u) =1   \qquad \psi (v,v) = \frac{1}{\l^2}  \qquad \psi(w,w) = 1\\ 
\psi(u,v)= \frac{-1}{\l}  \qquad \psi(u,w) = \frac{\l^2 + 2\l^2\c + 2\l +1}{-2\l^2 \c}  \qquad \psi(v,w) = \frac{\l^2+1}{-2\l^2}
\end{align*}

The identity $[u+w]=[v+x]$ implies $v+x = \c( u + w)$ for some non-zero scalar $\c \in \R$. By substituting $x = \c u - v + \c w$, the following identities are easily verified.
\begin{align*}
 \psi(u+\l v, u) = \psi(u+\l v, v) = \psi(w+\l x , w) = \psi(w+\l x, x)=0 \\ \psi(u+ \l v, u+\l v) = \psi (v+w,v+w) = \psi(w+\l x,w + \l x) = \psi(x+u, x+u)=0 
\end{align*}
The conic determined by $\psi$ contains the points $[u+\l v]$, $[v+w]$, $[w+\l x]$, $[x+u]$. The lines $([u], [v])$ and $([w],[x])$ are contained in the polars of $[u+\l v]$ and  $[w+\l x]$, respectively,
\end{proof}

\begin{figure}[htbp] 
\[\includegraphics[width=0.59\textwidth]{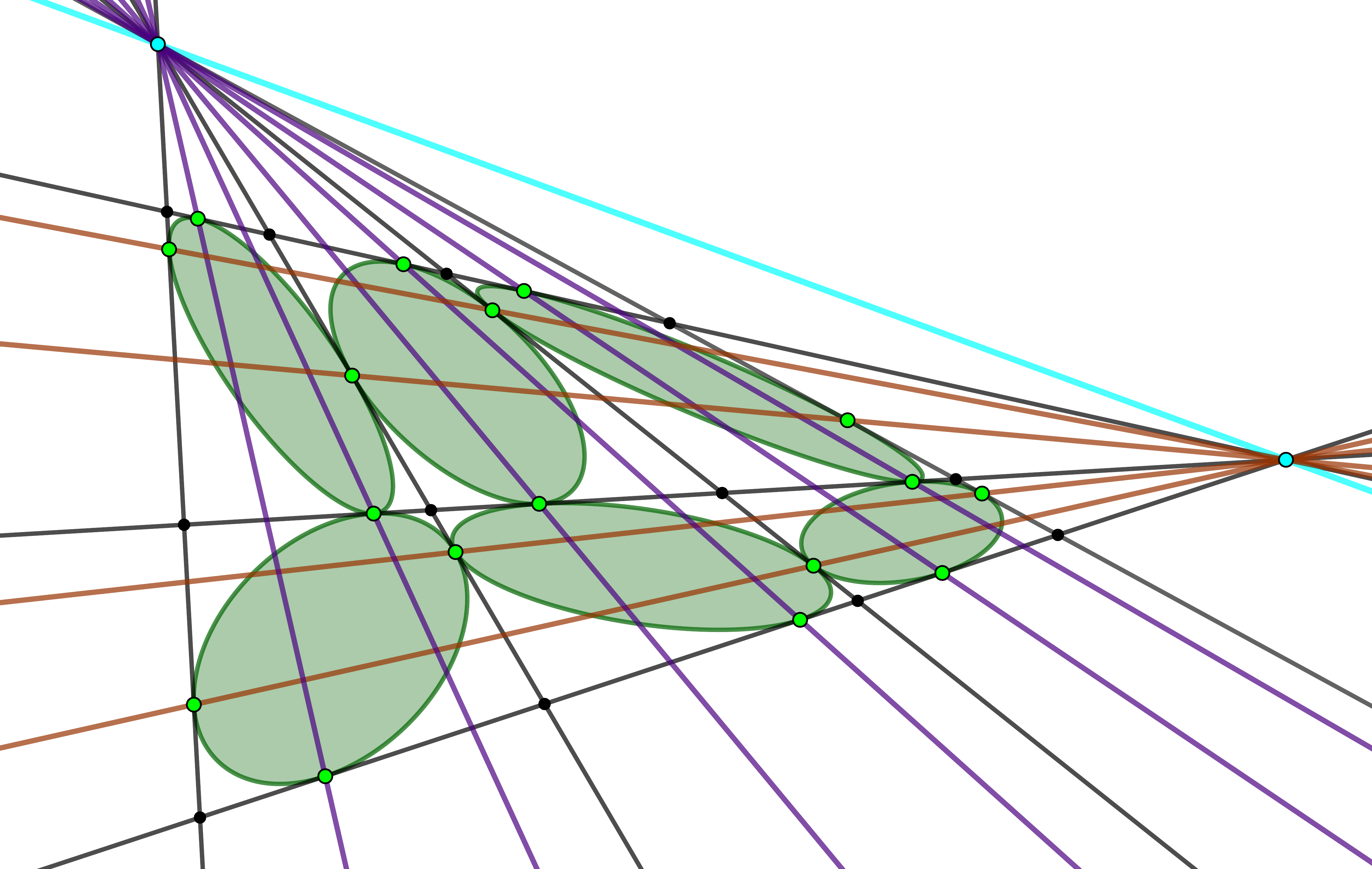}\]
\caption{A non-generic $3\times 2$ grid of quadrilaterals with touching inscribed conics. The corresponding common conic is degenerate. The lines $k_i$ are concurrent and so too are the lines $l_j$. The double contact conics $\mathcal{A}_i$ and $\mathcal{B}_j$ are also degenerate. They are pairs of lines passing through the concurrency points.} 
\label{figure: ewfewf}
\end{figure}

\begin{proof}[Proof of Theorem~\ref{thm: gridstructure}]
Each quadrilateral $Q_{i,j}$ is equipped with an inscribed conic $\mathcal{C}_{i,j}$ such that, for any two neighbouring quadrilaterals, the inscribed conics are touching. By Corollary~\ref{cor: tangentgridlines}, the lines $k_0, k_1, \ldots, k_m$ and $l_0, l_1, \ldots, l_n$ are tangent to a non-degenerate conic $\mathcal{C}$. Let $p_0, p_1, \ldots, p_m$ and $q_0, q_1, \ldots, q_n$ be the respective tangency points. For any fixed $i \in \{1, \dots, m\}$, consider the strip $K_{i-1,i}$ of quadrilaterals $\{Q_{i,j}\}_{j=1,\ldots,n}$. For each quadrilateral $Q_{i,j}$ in $K_{i-1,i}$, Lemma~\ref{lem: 2ndtangent} determines a conic, say $\mathcal{D}_j$, containing the points $p_{i-1}, p_i, K_{i-1,i}l_{j-1}, K_{i-1,i}l_j$ and with the tangent lines $k_{i-1}, k_i$. The conics $\mathcal{D}_j$ and $\mathcal{D}_{j+1}$ are equal because $p_{i-1}, p_i, K_{i-1,i}l_{j}$ are common points and the lines $k_{i-1}, k_i$ are common tangents. Thus, $\mathcal{A}_i$ is the conic $\mathcal{D}_1 = \ldots= \mathcal{D}_n$. Therefore, the conics $\{\mathcal{A}_i\}_{i=1, \ldots, m}$ exist and, symmetrically, the conics $\{\mathcal{B}_j\}_{j=1,\dots,n}$ also exist. By Theorem~\ref{thm: mainthm} and Corollary~\ref{cor: tangentgridlines}, there is one instance of touching inscribed conics such that all of the conics $\{\mathcal{A}_i\}_{i=1, \ldots, m}$ and $\{\mathcal{B}_j\}_{j = 1, \ldots, n}$ are double lines.
\end{proof}

\subsection{Incircular nets and billiards in conics}\label{subsection: billiards}

\emph{Incircular nets} are line grids with quadrilaterals with inscribed circles. The following characterisation of incircular nets can be found in \cite{IC}[Definition 2.3].

\begin{defn}\label{defn: IC}
Let $a_0, a_1, \ldots, a_m$ and $b_0, b_1, \ldots, b_n$ be lines in the Euclidean plane. The $m \times n$ grid of quadrilaterals $\Box(a_{i-1}, b_{j-1}, a_i, b_{j})$ is an \emph{incircular net} if and only if the following conditions are satisfied.
\begin{enumerate}[label=(\roman*)]
\item The lines $a_0, a_1, \ldots, a_m$ and $b_0, \ldots, b_n$ are tangent to a conic $\mathcal{C}$.
\item The points $a_{i-1} \cap a_i$ and $b_{j-1} \cap b_j$ are contained in a conic $\mathcal{D}$ that is confocal with $\mathcal{C}$.
\end{enumerate}
\end{defn} 

The lines $a_0, a_1, \ldots, a_m$ and $b_0, b_1, \ldots, b_n$ are the lines of two billiards in the conic $\mathcal{D}$ that have the same confocal caustic $\mathcal{C}$. Billiards in conics have caustics that are confocal conics \cite{Tabachnikov}.

The gridlines of any incircular net are tangent to a conic. Therefore, by Corollary~\ref{cor: tangentgridlines}, incircular nets are grids of quadrilaterals that admit a $1$-parameter family of touching inscribed conics. However, the inscribed circles of incircular nets are not touching inscribed conics. (See Figure~\ref{figure: ICnetwithTC}.)

\begin{thm}
For any incircular net, there is a dual grid of quadrilaterals that has a $1$-parameter family of touching inscribed conics. The vertices of the dual grid are the centres of the circles of the incircular net. The lines of the dual grid are angle bisector lines of the incircular net. (See Figure~\ref{figure: ICnetwithTC}.)
\end{thm}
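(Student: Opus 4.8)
The plan is to recognise the dual grid as a line grid all of whose gridlines are tangent to the conic $\mathcal{D}$ of Definition~\ref{defn: IC}, and then to quote Corollary~\ref{cor: tangentgridlines}. Let $a_0,\dots,a_m$ and $b_0,\dots,b_n$ be the lines of the incircular net, let $c_{i,j}$ be the centre of the circle inscribed in $\Box(a_{i-1},b_{j-1},a_i,b_j)$, and put $P_i:=a_{i-1}\cap a_i$ and $Q_j:=b_{j-1}\cap b_j$, which by Definition~\ref{defn: IC}(ii) all lie on $\mathcal{D}$. Since the inscribed circle touches all four edge-lines, its centre $c_{i,j}$ is equidistant from $a_{i-1}$ and $a_i$ and therefore lies on one of the two angle bisectors of $a_{i-1},a_i$ at $P_i$; symmetrically it lies on one of the two bisectors of $b_{j-1},b_j$ at $Q_j$.

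First I would identify these bisectors by means of the reflection property of confocal conics \cite{Tabachnikov}. Because $a_{i-1}$ and $a_i$ are the two tangents drawn from $P_i\in\mathcal{D}$ to the confocal conic $\mathcal{C}$, they are symmetric with respect to the tangent line of $\mathcal{D}$ at $P_i$; hence that tangent line is one of the two angle bisectors of $a_{i-1},a_i$. Writing $\alpha_i$ for this tangent and $\beta_j$ for the tangent of $\mathcal{D}$ at $Q_j$, the claim is that $c_{i,j}$ lies on $\alpha_i$ and on $\beta_j$. The lines $a_{i-1},a_i,b_{j-1},b_j$ are chords of $\mathcal{D}$, so the quadrilateral and its inscribed circle sit on the concave side of $\mathcal{D}$; this selects, among the two bisectors at $P_i$, exactly the tangent $\alpha_i$, and likewise $\beta_j$ at $Q_j$.

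Since $\alpha_i$ depends only on $i$ and $\beta_j$ only on $j$, it follows that $c_{i,j}=\alpha_i\cap\beta_j$; thus the centres of the inscribed circles are the vertices of the line grid whose gridlines are $\alpha_1,\dots,\alpha_m,\beta_1,\dots,\beta_n$, and these gridlines are the claimed angle bisector lines of the incircular net. As every $\alpha_i$ and $\beta_j$ is tangent to $\mathcal{D}$, no three of them are concurrent and so they are generic; Corollary~\ref{cor: tangentgridlines}, applied with common tangent conic $\mathcal{D}$, then gives that this grid admits a $1$-parameter family of touching inscribed conics, as required.

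The hard part is the selection carried out in the second paragraph: the reflection property exhibits the two bisectors of $a_{i-1},a_i$ as precisely the tangent and the normal of $\mathcal{D}$ at $P_i$, and a superficial picture can wrongly suggest the normal, so one must argue carefully from the position of the inscribed circle on the concave side of $\mathcal{D}$ that the correct bisector is the tangent $\alpha_i$. Finally, if the confocal conic $\mathcal{D}$ happens to be degenerate one replaces the last step by the degenerate form of Corollary~\ref{cor: tangentgridlines} illustrated in Figure~\ref{figure: ewfewf}.
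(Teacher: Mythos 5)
Your overall architecture matches the paper's: identify the dual gridlines as the tangent lines of $\mathcal{D}$, use the reflection property of confocal conics (equivalently, the billiard reflection law) to see that these are angle bisector lines of the incircular net, and apply Corollary~\ref{cor: tangentgridlines} with $\mathcal{D}$ as the common tangent conic. But the step you yourself flag as ``the hard part'' is genuinely wrong, and it is exactly the step the paper outsources to the Graves--Chasles theorem. Your claim that ``the quadrilateral and its inscribed circle sit on the concave side of $\mathcal{D}$'' is false, and even if it were true it would select the \emph{normal}, not the tangent: a circle tangent to $a_{i-1}$ and $a_i$ and contained in the interior side of the tangent line of $\mathcal{D}$ at $P_i$ necessarily lies in the angular region at $P_i$ bisected by the normal, so its centre would lie on the normal. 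What actually happens is the opposite: the incircle centre $c_{i,j}$ lies on the tangent line of $\mathcal{D}$ at $P_i$, hence strictly \emph{outside} $\mathcal{D}$ (a tangent line meets the closed interior of a conic only at the touching point), and the quadrilateral straddles $\mathcal{D}$. You can verify this in the rotationally symmetric example where $\mathcal{C}$ is the unit circle and $\mathcal{D}$ the concentric circle of radius $\sec\theta$, with $a_0, a_1$ tangent to $\mathcal{C}$ at angles $0, 2\theta$ and $b_0, b_1$ at angles $\delta, 2\theta+\delta$ for $0<\delta<2\theta$: the incircle of $\Box(a_0,b_0,a_1,b_1)$ is centred at distance $\sec\theta\sec(\delta/2)>\sec\theta$ from the common centre, i.e.\ at the intersection of the tangents of $\mathcal{D}$ at $P_1$ and $Q_1$, outside $\mathcal{D}$, while the quadrilateral has vertices both inside and outside $\mathcal{D}$.

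So after your (correct) reflection-property argument you only know that each $c_{i,j}$ lies on the tangent \emph{or} the normal of $\mathcal{D}$ at $P_i$, and your proposal contains no valid mechanism to resolve this dichotomy; the identification $c_{i,j}=\alpha_i\cap\beta_j$, and with it the claim that the centres are the vertices of the dual grid, remains unproven. The paper closes precisely this gap by quoting the Graves--Chasles theorem (see \cite{IC}): for two points $P_i, Q_j$ on $\mathcal{D}$ whose tangent lines to the confocal conic $\mathcal{C}$ are $a_{i-1}, a_i$ and $b_{j-1}, b_j$, the circle inscribed in $\Box(a_{i-1},b_{j-1},a_i,b_j)$ is centred at the intersection of the tangents of $\mathcal{D}$ at $P_i$ and $Q_j$ (Figure~\ref{figure: extwobilliards}). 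The paper runs the construction in the opposite direction --- starting from the two billiards inscribed in $\mathcal{D}$ and their tangent lines, then placing the circles by Graves--Chasles --- but that difference is presentational; the essential content you are missing is this theorem. Replace your concavity argument by it (or by an honest proof of the centre statement), and the remainder of your proof --- the bisector identification via the reflection law, the genericity of tangent lines of a non-degenerate conic, and the application of Corollary~\ref{cor: tangentgridlines} --- is sound and coincides with the paper's. (Note also that your final remark has no support in the paper: Corollary~\ref{cor: tangentgridlines} is stated only for non-degenerate conics, and Figure~\ref{figure: ewfewf} illustrates a non-generic configuration rather than a proved degenerate form of the corollary.)
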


\begin{proof}
Any incircular net determines two billiards $p_0, p_1, \ldots, p_m$ and $q_0, q_1, \ldots, q_n$ that are inscribed in a conic $\mathcal{D}$ and that have the same confocal caustic $\mathcal{C}$. Let $k_0, \ldots, k_m$ and $l_0, \ldots, l_n$ be the tangent lines of $\mathcal{D}$ at the points $p_0, p_1, \ldots, p_m$ and $q_0, q_1, \ldots, q_n$, respectively. By Corollary~\ref{cor: tangentgridlines}, the $m \times n$ grid of quadrilaterals $\Box(k_{i-1}, l_{j-1}, k_i, l_{j})$ admits a $1$-parameter family of touching inscribed conics. The billiard reflection law ensures that the tangent line of $\mathcal{D}$ at $p_i$ is an angle bisector of the lines $(p_{i-1}, p_i)$ and $(p_i, p_{i+1})$. Likewise, the tangent line of $\mathcal{D}$ at $q_j$ is an angle bisector of the lines $(q_{j-1}, q_j)$ and $(q_j, q_{j+1})$. By the Graves-Chasles theorem (see for example \cite{IC}), there is a circle that is tangent to the four dotted lines and that is centred at the intersection point of the tangent lines of $\mathcal{D}$ at $p_i$ and $q_j$. 
\end{proof}

\begin{figure}[htbp]
\begin{center}
\includegraphics[width =0.95\textwidth]{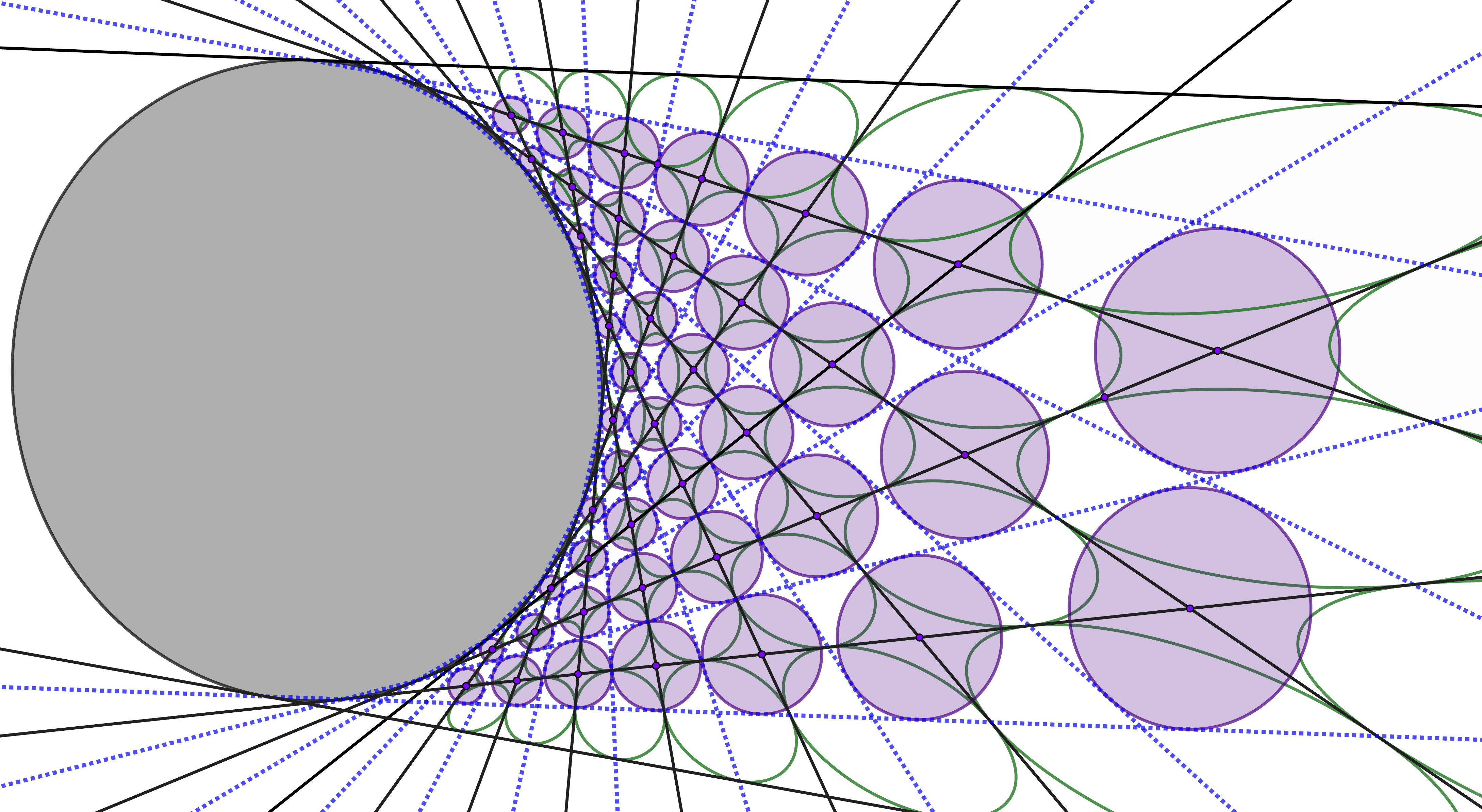}
\end{center}
\caption{An incircular net. The circles and the touching conics are inscribed in combinatorially dual line grids. The line grid with touching inscribed conics is given by the lines passing through the centres of the circles.}
\label{figure: ICnetwithTC}
\end{figure}

\begin{figure}[htbp!]
\begin{center}
\includegraphics[width =0.75\textwidth]{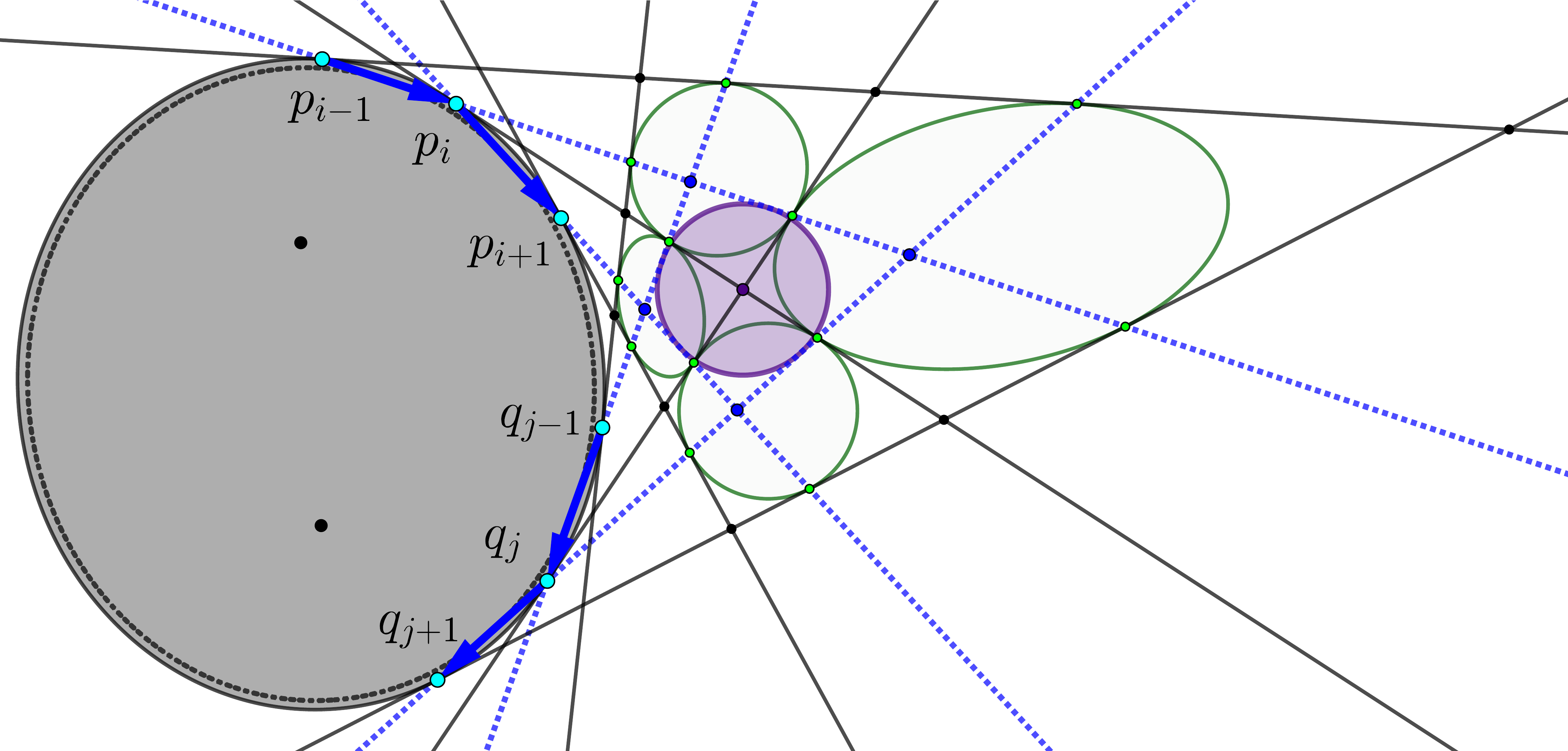}
\end{center}
\caption{The two polygonal chains $p_{i-1}, p_i$, $p_{i+1}$ and $q_{j-1}, q_j, q_{j+1}$ are billiards that are inscribed in a conic $\mathcal{D}$. Suppose that the two billiards have the same confocal caustic. Then, by the Graves-Chasles theorem, there exists a circle that is tangent to the four dotted lines. The centre of the circle is the intersection point of the tangent lines of $\mathcal{D}$ at $p_i$ and $q_j$. These tangent lines generate a line grid with touching inscribed conics.}
\label{figure: extwobilliards}
\end{figure}

\end{document}